\newtheorem{thm}{Theorem}[section]
\newtheorem{lem}[thm]{Lemma}
\newtheorem{prop}[thm]{Proposition}
\newtheorem{cor}[thm]{Corollary}
\theoremstyle{definition}
\newtheorem{defn}[thm]{Definition}
\theoremstyle{remark}
\newtheorem*{rmk}{Remark}
\theoremstyle{remark}
\newtheorem*{ex}{Example}
\newcommand\NN{\mathbb N}
\newcommand\CC{\mathbb C}
\newcommand\QQ{\mathbb Q}
\newcommand\ZZ{\mathbb Z}
\newcommand\ot{\otimes}
\renewcommand\to{\rightarrow}
\renewcommand\phi{\varphi}
\newcommand\interval[1]{\llbracket #1 \rrbracket}
\newcommand\VV{\mathbb V}
\newcommand\WW{\mathbb W}
\newcommand\FF{\mathcal F}
\renewcommand\SS{\mathbf S}
\newcommand\Schur{\mathbb S}
\DeclareMathOperator\Ind{Ind}
\DeclareMathOperator\Res{Res}
\DeclareMathOperator\Hom{Hom}
\DeclareMathOperator\ssCoind{\underline{Coind}}
\DeclareMathOperator\ssHom{\underline{Hom}}
\newcommand\Mod[1]{#1\operatorname{-Mod}}
\DeclareMathOperator\soc{soc}
\newcommand\TT{\mathbb T}
\newcommand\CAT[1]{\mathcal{O}_{\mathsf{LA}}^{#1}}
\newcommand\gl{\mathfrak{gl}}
\renewcommand\sl{\mathfrak{sl}}
\newcommand\lie[1]{{\mathfrak{#1}}}
\renewcommand\epsilon{\varepsilon}
\newcommand\fin{\mathsf{fin}}
\newcommand\dual[1]{#1^\#}
\newcommand\Part{\mathsf{Part}}
\newcommand\LAprojector{\pi_{\mathsf{LA}}}
\DeclareMathOperator\tr{tr}
\newcommand\order{\mathsf{inf}}
\renewcommand\fin{\mathsf{fin}}
\DeclareMathOperator\End{End}
\DeclareMathOperator\Ext{Ext}
\title{Highest weight categories of $\gl(\infty)$-modules}
\author{Pablo Zadunaisky}
\address{Instituto de Investigaciones Matem\'aticas Luis A. Santal\'o (IMAS)
} 
\email{pzadub@dm.uba.ar}
\thanks{This paper was written while the author was a posdoc at Jacobs 
university under the DFG grant 980/8-1. The author is a CONICET researcher and 
received funding from BID PICT 0099/2019.}
\begin{document}

\begin{abstract}
We study a category of modules over $\gl(\infty)$ analogous to category 
$\mathcal O$. We fix adequate Cartan, Borel and Levi-type subalgebras $\lie h, 
\lie b$ and $\lie l$ with $\lie l \cong \gl(\infty)^n$, and define $\CAT{\lie 
l}{\gl(\infty)}$ to be the category of $\lie h$-semisimple, $\lie n$-nilpotent
modules that satisfy a large annihilator condition as $\lie l$-modules. Our 
main result is that these are highest weight categories in the sense of Cline, 
Parshall and Scott. We compute the simple multiplicities of standard objects 
and the standard multiplicities in injective objects, and show that a form of 
BGG reciprocity holds in $\CAT{\lie l}{\gl(\infty)}$. We also give a 
decomposition of $\CAT{\lie l}{\gl(\infty)}$ into irreducible blocks.

\noindent \textbf{Keywords:} Lie algebras, representations, $\gl(\infty)$, 
Large annihilator condition.
\end{abstract}

\maketitle

\section{Introduction}
The algebra $\gl(\infty)$ is the most basic example of a complex locally-finite 
Lie algebra, i.e. a limit of finite-dimensional complex Lie algebras. Its 
representation theory has been a very active area of study for the last ten 
years. See for example \cites{PS11a,PS11b,DCPS16,HPS19,GP19,PS19}.
Beyond its intrinsic interest the representation theory of $\gl(\infty)$ 
has connections to two other areas of Lie theory: the stable representation 
theory of the family $\gl(d,\CC)$, and the classical representation theory of 
the Lie superalgebra $\gl(n|m)$. 

The relation between representations of $\gl(\infty)$ and $\gl(n|m)$ goes back 
to Brundan \cite{Brundan03}, who used a categorical action of the quantized 
enveloping algebra of $\gl(\infty)$ on category $\mathcal O_{\gl(n|m)}$ to 
compute the characters of atypical finite dimensional modules. This approach 
was expanded by Brundan, Losev and Webster in \cite{BLW17} and by Brundan and 
Stroppel in \cite{BS12a}. Also, Hoyt, Penkov and Serganova studied the 
$\gl(\infty)$-module structure of the integral block $\mathcal O^{\ZZ}_{\gl
(n|m)}$ in \cite{HPS19}. Recently Serganova has extended this to a 
categorification of Fock-modules of $\sl(\infty)$ through the Deligne 
categories $\mathcal V_t$ in \cite{Serganova21}.

Let us now turn to the connections of $\gl(\infty)$ with stable representation
theory. Informally this refers to the study of sequences of representations 
$V_d$ of $\gl(d,\CC)$, compatible in a suitable sense, as $d$ goes to 
infinity. For example, set $W_d = \CC^d$ and $V_d(p,q) = W_d^{\ot p}\ot 
(W_d^*)^{\ot q}$. For each $d \geq 1$ the decomposition of this module into its 
simple components is a consequence of Schur-Weyl duality, and we can ask 
whether this decomposition is in some way compatible with the obvious 
inclusion maps $V_d(p.q) \hookrightarrow V_{d+1}(p,q)$. One way to solve this 
problem is regard the limit $T^{p,q} = \varinjlim_d V_d(p,q)$ as a
$\gl(\infty)$-module and study its decomposition, as done by Penkov and Styrkas 
in \cite{PS11b}. A more categorical approach to this problem appeared 
independently in the articles by Sam and Snowden \cite{SS15} and by Dan-Cohen, 
Penkov and Serganova in \cite{DCPS16}. The former studies the category 
$\operatorname{Rep}(\mathsf{GL}
(\infty))$ of stable algebraic representation theory of the family of groups 
$\mathsf{GL}_d(\CC)$, and the latter is focused on the category 
$\TT_{\gl(\infty)^0}$ of modules that arise as subquotients of the 
modules $T^{p,q}$. As mentioned in the introduction to \cite{SS15}, these  
categories are equivalent. 

By \cite{DCPS16}, objects in the category $\TT_{\gl(\infty)}^0$ are the 
integrable finite length $\gl(\infty)$-modules that satisfy the so-called 
large annihilator condition (LAC from now on). This category is analogous in 
many ways to the category of integrable finite dimensional representations of
$\gl(n,\CC)$, and it forms the backbone of the representation theory of 
$\gl(\infty)$. Since this category is well understood by now, it is natural to 
look for analogues of category $\mathcal O$ for $\gl(\infty)$. Several 
alternatives have been proposed such as those by Nampaisarn 
\cite{Nampaisarn17}, Coulembier and Penkov \cite{CP19}, and Penkov and 
Serganova \cite{PS19}.

The fact that there is no one obvious analogue of $\mathcal O$ is due to two 
reasons. First, Cartan and Borel subalgebras of $\gl(\infty)$ behave in a much 
more complicated way than in the finite dimensional case, in particular 
different choices of these will produce non-equivalent categories of highest 
weight modules. The second problem is that the enveloping algebra of 
$\gl(\infty)$ is not left-noetheran, so its finitely generated modules do not 
form an abelian category. The first problem can be solved (or rather swept 
under the rug) by fixing adequate choices of Cartan and Borel subalgebras; this
is the road followed in this paper. The second however does not have an obvious 
solution. 

In \cite{PS19} Penkov and Serganova propose replacing this condition
with the LAC. They study the category $\mathcal{OLA}$ consisting of $\lie 
h$-semisimple and $\lie n$-nilpotent modules satisfying the LAC, and in 
particular show that it is a highest weight category. On the other hand,
the categorifications arising in \cites{HPS19,Serganova21} above only satisfy 
the LAC when seen as modules over a Levi-type subalgebra $\lie l \subset 
\gl(\infty)$. Given the importance of the work relating representations of 
$\gl(\infty)$ to Lie superalgebras, this motivates the study of analogues of 
category $\mathcal O$ where the condition of being finitely generated is 
replaced by this weaker version of the LAC. Thus in this paper we study the 
category $\CAT{\lie l}{\gl(\infty)}$ of $\lie h$-semisimple, $\lie n$-torsion
modules satisfying the LAC with respect to various Levi-type subalgebras $\lie 
l$.

As mentioned above our main result is that $\CAT{\lie l}{\gl(\infty)}$ is a 
highest weight category. Let us give some details on the result. We 
introduce the notion of eligible weights, which are precisely those that can
appear in a module satisfying the LAC. Admissible weights are endowed with an
interval-finite order that has maximal but no minimal elements. Simple objects 
of $\CAT{\lie l}{\gl(\infty)}$ are precisely the simple highest weight modules 
indexed by eligible weights. Standard objects are given by projections of dual 
Verma modules to $\CAT{\lie l}{\gl(\infty)}$ and have infinite length. Their 
simple multiplicities are given in terms of weight multiplicities of a (huge) 
representation of $\gl(\infty)$. Finally, injective envelopes have finite 
standard filtrations that satisfy a form of BGG reciprocity. We point out that 
$\CAT{\lie l}{\gl(\infty)}$ does not have enough projectives, and hence no 
costandard modules.


\vspace{12pt}

The article is structured as follows. Section \ref{s:generalities} contains 
some general notation. In section \ref{s:faces} we introduce several 
presentations of $\gl(\infty)$, each of which highlights some particular 
features and subalgebras. Section \ref{s:rep-theory} deals with various matters 
related to representation theory of Lie algebras, and some specifics regarding 
$\gl(\infty)$. In section \ref{s:cats-reps} we discuss some basic categories of 
representations of $\gl(\infty)$ in its various incarnations. We begin our 
study of $\CAT{}{}$ in section \ref{s:ola}, where we classify its simple 
objects, prove some general categorical properties, and show that simple 
multiplicities of a general object can be computed in terms of simple 
multiplicities for category $\overline{\mathcal O}_{\lie s}$. In section 
\ref{s:standard} we prove the existence of standard modules and compute their 
simple multiplicities. This last result depends on a long technical computation 
given in an appendix to the section. Finally, in section \ref{s:injectives} 
we wrap up the proof that $\CAT{}{}$ is a highest weight category with an 
analysis of injective modules. We also prove an analogue of BGG reciprocity and 
give a decomposition of $\CAT{}{}$ into irreducible blocks.

The usual zoo of Cartan, Borel, parabolic, and Levi subalgebras, along with 
their nilpotent ideals, is augmented in each case by their corresponding 
exhaustions and the subalgebras spanned by finite-root spaces. To help keep 
track of these wild variety, most of these subalgebras are introduced at once in 
subsection \ref{ss:subalgebras-vn}, along with a visual device to describe them.

\section*{Acknowledgements}
I thank Ivan Penkov for introducing me to the study of locally-finite Lie 
algebras and posing the questions that led to this work. Also, Vera Serganova 
answered several questions and provided valuable references. 


\section{Generalities}
\label{s:generalities}

\subsection{Notation}
For each $r \in \NN$ we set $\interval r = \{1, 2, \ldots, r\}$. Throughout we 
denote by $\ZZ^\times$ the set of nonzero integers endowed with the following, 
not quite usual order
\begin{align*}
1 \prec 2 \prec 3 \prec \cdots \prec -3 \prec -2 \prec -1.
\end{align*}
The symbol $\delta_{i,j}$ denotes the Kronecker delta.

We denote by $\Part$ the set of all partitions, and we identify each partition
with its Young diagram. Given $\lambda \in \Part$ we will the corresponding 
Schur functor by $\Schur_\lambda$. For any $\lambda, \mu, \nu$ we denote by
$c_{\lambda,\mu}^\nu$ the corresponding Littlewood-Richardson coefficient.

Vector spaces and unadorned tensor products are always taken over $\CC$ unless 
explicitly stated. Given a vector space $V$ we denote its algebraic dual by 
$V^*$. Given $n \in \ZZ_{>0}$ we will denote by either $V^n$ or $n V$ the direct
sum of $n$ copies of $V$. Given $d \in \ZZ_{>0}$ we will denote by $\SS^d(V)$ 
the $d$-th symmetric power of $V$, by $\SS^\bullet(V)$ its symmetric 
algebra, and by $\SS^{\leq d}(V)$ the direct sum of all $\SS^{d'}(V)$ with $d' 
\leq d$. We will often use that given a second vector space $W$ we have
\begin{align*}
\SS^\bullet(V \ot W) &\cong \bigoplus_{\lambda \in \Part} 
	\Schur_\lambda(V) \otimes \Schur_\lambda(W).
\end{align*}

\subsection{Locally finite Lie algebras}

A locally finite Lie algebra $\lie g$ is one where any finite set of elements
is contained in a finite dimensional subalgebra. If $\lie g$ is countable 
dimensional then this is equivalent to the existence of a chain of finite 
dimensional subalgebras $\lie g_1 \subset \lie g_2 \subset \cdots \subset \lie 
g = \bigcup_{r \geq 0} \lie g_r$. Any such chain is called an \emph{exhaustion}
of $\lie g$. We say $\lie g$ is locally root-reductive if each $\lie g_r$ can 
be taken to be reductive, and the inclusions send root spaces to root spaces 
for a fixed choice of nested Cartan subalgebras $\lie h_r \subset \lie g_r$ 
satisfying $\lie h_r = \lie h_{r+1} \cap \lie g_r$. In general we will say that 
$\lie g$ is locally reductive, nilpotent, etc. if there is has an exhaustion 
such that each $\lie g_r$ is of the corresponding type.

We denote by $\gl(\infty)$ the direct limit of the sequence of Lie algebras
\begin{align*}
\gl(1,\CC) \hookrightarrow \gl(2,\CC) \hookrightarrow 
	\cdots \gl(n,\CC) \hookrightarrow \cdots
\end{align*}
where each map is given by inclusion in the upper left corner. This restricts
to inclusions of $\sl(n,\CC)$ into $\sl(n+1,\CC)$, and we denote the limit of
these algebras by $\sl(\infty)$, which is clearly a subalgebra of 
$\gl(\infty)$. This is a locally finite Lie algebra.

Fix a locally finite Lie algebra $\lie g$ and an exhaustion $\lie g_r$. Suppose 
also that we have a sequence $(M_r, j_r)$ where every $M_r$ is a $\lie 
g_r$-module (which makes every $M_s$ with $s \geq r$ a $\lie g_r$-module), and 
$j_r: M_r \to M_{r+1}$ is an injective morphism of $\lie 
g_r$-modules. The limit vector space $M = \varinjlim M_r$ has the structure of 
a $\lie g_r$-module for each $r$, and these structures are compatible in the 
obvious sense so $M$ is a $\lie g$-module. We refer to $M_r$ as an exhaustion 
of $M$. Any $\lie g$-module $M$ has an exhaustion: if $X$ is a generating set 
of $M$, we can take as $M_r$ the $\lie g_r$-submodule generated by $X$. 

Concepts such as Cartan subalgebras, root systems, Borel and parabolic 
subalgebras relative to these systems, weight modules, etc. are defined for 
locally finite Lie subalgebras. In the context of this paper these objects will 
behave as in the finite dimensional case, but there are some subtle differences 
which we will point out when relevant. For details we refer the reader to the
monograph \cite{HP22} and the references therein.

We will write $\lie g = \lie b \niplus \lie a$ if $\lie b$ is a subalgebra of
$\lie g$ and $\lie a$ is an ideal that is also a vector space complement for 
$\lie b$. Suppose we have a locally finite Lie algebra $\lie g$ with fixed 
splitting Cartan subalgebra $\lie h$ (i.e. such that $\lie g$ is a semisimple 
$\lie h$-module through the adjoint action), and Borel (i.e. maximal locally 
solvable) subalgebra $\lie b$ of the form $\lie h \niplus \lie n$ for some 
subalgebra $\lie n$. Given any
weight $\lambda \in \lie h^*$, we can define Verma modules as usual by 
$\Ind_{\lie b}^{\lie g} \CC_\lambda$. We will denote the Verma module by 
$M_{\lie g}(\lambda)$ or simply $M(\lambda)$ when $\lie g$ is clear from the 
context. We will also denote by $L(\lambda) = L_{\lie g}(\lambda)$ the 
corresponding unique simple quotient of $M_{\lie g}(\lambda)$.

\section{The many faces of $\lie{gl}(\infty)$}
\label{s:faces}

In this section we will review some standard facts about the Lie algebra 
$\gl(\infty)$ and introduce several different ways in which this arises. While 
this amounts to choosing different exhaustions, we will take a different 
approach to these various ``avatars'' of $\gl(\infty)$, which brings to the fore
some non-obvious structures in this Lie algebra.

\subsection{The Lie algebra $\lie g(\VV)$} 
Let $I$ be any infinite denumerable set and let $\VV = \langle v_i \mid i \in 
I\rangle$ and $\VV_* = \langle v^i \mid i \in I\rangle$, which we endow with a 
perfect pairing
\begin{align*}
\tr: \VV_* \ot \VV &\to \CC\\
	v^i \ot v_j &\mapsto \delta_{i,j}.
\end{align*}
The vector space $\VV \ot \VV_*$ is a nonunital associative algebra with 
$v' \ot v \cdot w' \ot w = \tr(v \ot w') v' \ot w$. We denote the Lie algebra 
associated to this algebra by $\lie g(\VV, \VV_*, \tr)$, or simply by $\lie 
g(\VV)$. For $i,j \in I$ we set $E_{i,j} = v^i \ot v_j$.

Take for example $I = \NN$. Given $r \in \ZZ_{>0}$ write $\lie g_r = \langle 
E_{i,j} \mid i,j \in \interval r\rangle$. Then $\lie g_r$ is a subalgebra 
of $\lie g(\VV)$ isomorphic to $\gl(r,\CC)$. The inclusions $\lie g_r \subset 
\lie g_{r+1}$ correspond to the injective Lie algebra morphisms $\gl(r,\CC) 
\hookrightarrow \gl(r+1,\CC)$ given by embedding a $r\times r$ matrix into the 
upper left corner of a $r+1 \times r+1$ matrix with its last row and column 
filled with zeroes. Thus
\begin{align*}
\lie g(\VV) 
	&= \bigcup_{r \geq 1} \lie g_r \cong 
		\varinjlim \gl(r,\CC) \cong \gl(\infty).
\end{align*}
The choice of $I$ does not change the isomorphism type of this algebra, so in 
general every algebra of the form $\lie g(\VV, \VV_*, \tr)$ is isomorphic to 
$\gl(\infty)$.

Notice that $\VV$ and $\VV_*$ are $\lie g(\VV)$-modules with $E_{i,j} e_k
= \delta_{j,k}e_i$ and $E_{i,j} e^k = - \delta_{k,i} e^j$. A simple comparison
shows that $\VV = \lim V_r$, where $V_r$ is the natural representation of 
$\gl(r,\CC)$ and the maps $V_r \to V_{r+1}$ are uniquely determined up to 
isomoprhism by the fact that they are $\gl(r,\CC)$-linear. In a similar fashion,
$\VV_* \cong \varinjlim V_r^*$.

\subsection{Cartan subalgebra and root decomposition}
The definition and study of Cartan subalgebras of $\gl(\infty)$ is an 
interesting and subtle question, which was taken up in \cite{NP03}. General
Cartan subalgebras can behave quite differently from Cartan subalgebras of 
finite dimensional reductive Lie algebras, for example they may not produce
a root decomposition of $\gl(\infty)$. We will sidestep this problem by 
choosing one particularly well-behaved Cartan subalgebra as ``the'' Cartan 
subalgebra of $\lie g(\VV)$, namely the obvious maximal commutative subalgebra 
$\lie h(\VV) = \langle E_{i,i} \mid i \in I \rangle$, and freely borrow 
notions from the theory of finite root systems that work for this particular 
choice. 

The action of the Cartan subalgebra $\lie h(\VV)$ on $\lie g(\VV)$ is 
semisimple. Denoting by $\epsilon_i$ the unique functional of $\lie h(\VV)^*$ 
such that $\epsilon_i(E_{j,j}) = \delta_{i,j}$, the root system of $\lie g(\VV)$
 with respect to $\lie h(\VV)$ is 
\begin{align*}
\Phi &= \{\epsilon_i - \epsilon_j \mid i,j \in I\}
\end{align*}
with the component of weight $\epsilon_i - \epsilon_j$ spanned by $E_{i,j}$, 
and hence $1$-dimensional. Also $\lie g(\VV)_0 = \lie h(\VV)$, which is of 
course infinite dimensional.

\subsection{Positive roots, simple roots and Borel subalgebras}
If we fix a total order $\prec$ on $I$ (as we did implicitly when setting $I = 
\NN$) we can see an element of $\lie g(\VV)$ as an infinite matrix whose rows 
and columns are indexed by the set $I$. A total order also allows us to define 
a set of positive roots
\begin{align*}
\Phi^+_I &= \{\epsilon_i - \epsilon_j \mid i \prec j \}
\end{align*}
and a corresponding Borel subalgebra $\lie b_I = \lie h(\VV) \oplus 
\bigoplus_{\alpha \in \Phi^+_I} \lie g(\VV)_\alpha$. 

Again, Borel subalgebras of $\gl(\infty)$ behave quite differently than Borel
subalgebras of $\gl(r,\CC)$, and their behaviour is tied to the order type of 
$I$. One striking difference is the following: simple roots can be defined as 
usual, namely as those positive roots which can not be written as the sum of 
two positive roots, but it may happen that simple roots span a strict subspace 
of the root space. In the extreme case $I = \QQ$ there are \emph{no} simple 
roots. In this article we will only consider a few well-behaved examples, the 
reader interested in the general theory of Borel subalgebras of $\gl(\infty)$
can consult \cites{DP04,DanCohen08,DCPS07}.

Up to order isomoprhism, the only cases where simple roots span the full root
space are $I=\ZZ_{>0}, \ZZ_{<0}$ and $\ZZ$ with their usual orders. These are 
called the \emph{Dynkin} cases, and the corresponding Borel subalgebras are said
to be of Dynkin type. An example of non Dynkin type comes from the choice $I = 
\ZZ^\times$, where the simple roots are
\begin{align*}
	\{\epsilon_i - \epsilon_{i+1} \mid i \in \ZZ^\times\setminus\{-1\}\},
\end{align*}
but the root $\epsilon_1 - \epsilon_{-1}$ is not in their span. We get a basis 
of the root space by adding this extra root to the set of simple roots. 
A root is called \emph{finite} if it is in the span of simple roots, otherwise 
it is called \emph{infinite}. By definition we are in a Dynkin case if and only 
if every root is finite.

\subsection{Further subalgebras and visual representation}
\label{ss:visual-rep}
For the rest of this article we will assume that the bases of $\VV$ and $\VV_*$ 
are indexed by $\ZZ^\times$. Thus elements of $\lie g(\VV)$ can be seen as 
infinite matrices with rows and columns indexed by $\ZZ^\times$ and finitely 
many nonzero entries. The choice of $\ZZ^\times$ as index set means that it 
makes sense to speak of the $k$-to-last row or column of an infinite matrix $g 
\in \lie g(\VV)$ for any $k \in \ZZ_{>0}$. 

We denote by $\lie b(\VV)$ Borel subalgebra corresponding to $I = \ZZ^\times$, 
which is non-Dynkin since it has an infinite root in its support. We also 
denote by $\lie n(\VV)$ the commutator subalgebra $[\lie b(\VV),\lie b(\VV)]$, 
so $\lie b(\VV) = \lie h(\VV) \niplus \lie n(\VV)$. Notice that $\lie n(\VV)$ 
is not nilpotent but only locally nilpotent.

We now introduce some useful subalgebras of $\lie g(\VV)$. Fix $r \in \ZZ_{>0}$ 
and set
\begin{align*}
\lie g(\VV)_r &= \langle E_{i,j} \mid i,j \in \pm \interval r\rangle
\end{align*}
This is a finite dimensional subalgebra of $\lie g(\VV)$, isomorphic to 
$\gl(2r, \CC)$. Clearly $\lie g(\VV)_r \subset \lie g_{r+1}(\VV)$ and $\lie 
g(\VV) =\bigcup_{r \geq 1} \lie g(\VV)_r$, so this is an exhaustion of $\lie 
g(\VV)$. We also set 
\begin{align*}
\VV[r] &= \langle v_i \mid r+1 \preceq i \preceq -r-1 \rangle \subset \VV
&\VV_*[r] &= \langle v^i \mid r+1 \preceq i \preceq -r-1 \rangle \subset \VV_*.
\end{align*}
The map $\tr$ restricts to a non-degenerate pairing between these two subspaces 
and we write $\lie g(\VV)[r] = \lie g(\VV[r], \VV_*[r], \tr)$. This Lie algebra 
is isomorphic to $\lie g(\VV)$, and it is the centraliser of $\lie g_k(\VV)$ 
inside $\lie g(\VV)$.

We will meet many more subalgebras of $\lie g(\VV)$, so we now introduce
a visual aid to recall them. We will represent elements of $\lie 
g(\VV)$ as squares, and we will represent a subalgebra by shading in gray the 
region where nonzero entries can be found, while unshaded areas will always be
filled with zeroes. The following examples should help clarify this idea.
\begin{align*}
\begin{tikzpicture}
	\draw
		(0,0) rectangle (1,1); 
	\filldraw[color=black!20!white, draw=none]
		(0,0) rectangle (1,1);
	\draw (0.5,-0.5) node {$\lie g(\VV)$}; 
\end{tikzpicture}
&&\begin{tikzpicture}
	\draw
		(0,0) rectangle (1,1); 
	\filldraw[color=black!20!white, draw=none]
		(0,1) -- (1,1) -- (1,0) -- cycle;
	\draw (0.5,-0.5) node {$\lie b(\VV)$}; 
\end{tikzpicture}
&&\begin{tikzpicture}
	\draw
		(0,0) rectangle (1,1); 
	\filldraw[color=black!20!white, draw=none]
		(0,1) -- (1,1) -- (1,0) -- cycle;
	\draw[color=black,dashed] (0,1) -- (1,0);
	\draw (0.5,-0.5) node {$\lie n(\VV)$}; 
\end{tikzpicture}
&&\begin{tikzpicture}
	\draw
		(0,0) rectangle (1,1); 
	\filldraw[color=black!20!white, draw=none]
		(0.2,0.2) rectangle (0.8,0.8);
	\draw (0.5,-0.5) node {$\lie g(\VV)[r]$}; 
\end{tikzpicture}
&&\begin{tikzpicture}
	\draw
		(0,0) rectangle (1,1); 
	\filldraw[color=black!20!white, draw=none]
		(0,0) rectangle (0.2,0.2);
	\filldraw[color=black!20!white, draw=none]
		(0.8,0) rectangle (1,0.2);
	\filldraw[color=black!20!white, draw=none]
		(0,0.8) rectangle (0.2,1);
	\filldraw[color=black!20!white, draw=none]
		(0.8,0.8) rectangle (1,1);
	\draw (0.5,-0.5) node {$\lie g_k(\VV)$}; 
\end{tikzpicture}
\end{align*}
Notice that with these representations each corner of the square contains 
finitely many entries, and that the centre of the square concentrates 
infinitely many entries.

\subsection{The Lie algebra $\lie g(\VV^n)$}
\label{ss:vn}
Fix $n \in \NN$. We denote by $e_i$ the $i$-th vector of the canonical basis of 
$\CC^n$, and by $e^i$ the $i$-th vector in the dual canonical basis of 
$(\CC^n)^*$. Set $\VV^n = \VV \otimes \CC^n, \VV^n_* = \VV_* \ot (\CC^n)^*$
and set
\begin{align*}
	\tr^n: \VV^n_* \ot \VV^n &\to \CC\\
	(v^i \ot e^k)\ot(v_j \ot e_l) &\mapsto \delta_{i,j}\delta_{k,l}. 
\end{align*}
We can form the Lie algebra $\lie g(\VV^n) = \lie g(\VV^n, \VV^n_*, \tr^n)$ as 
before. Since $\VV$ and $\VV_*$ have bases indexed by $\ZZ^\times$, the vector
spaces $\VV^n$ and $\VV_*^n$ have bases indexed by $\ZZ^\times \times 
\interval n$. Since the isomorphism type of $\lie g(\VV)$ is independent of the 
indexing set as long as it is infinite and denumerable, any bijection between 
$\ZZ^\times$ and $\ZZ^\times \times \interval n$ will induce an isomorphism 
$\lie g(\VV^n) \cong \lie g(\VV)$. Notice that under any such 
isomorphism $\lie h(\VV^n)$ is mapped to $\lie h(\VV)$.

On the other hand there is an obvious isomorphism of vector spaces $\lie 
g(\VV^n) \cong \lie g(\VV) \ot M_n(\CC)$. Thus we can see elements of 
$\lie g(\VV^n)$ as $n \times n$ block matrices, with each block an infinite
matrix from $\lie g(\VV)$. We set for each $k \in
\interval n, r \in \ZZ_{>0}$
\begin{align*}
\VV^{(k)} &= \VV \otimes \langle e_k\rangle,
	&\VV^{(k)}_* &= \VV_* \otimes \langle e^k\rangle;\\
\VV^{(k)}[r] &= \VV[r] \otimes \langle e_k\rangle,
	&\VV^{(k)}_*[r] &= \VV_*[r] \otimes \langle e^k\rangle.
\end{align*}
so $\lie g(\VV^n) = \bigoplus_{k,l \in \interval n} \VV^{(k)} \ot \VV^{(l)}$.

We highlight this particular avatar of $\gl(\infty)$ since it reveals some 
internal structure which is not obvious in its usual presentation. The first
example is the Borel subalgebra $\lie b(\VV^n)$, which is awkward to handle as 
a subalgebra of $\gl(\infty)$ with the usual presentation. We also obtain
a non-obvious exhaustion by setting $\lie g_k(\VV^n) = \lie g_k(\VV) \ot 
M_n(\CC)$ for each $k \in \ZZ_{>0}$.

Another feature of $\gl(\infty)$ which becomes clear by looking at its 
avatar $\lie g(\VV^n)$ is that it has a $\ZZ^n$-grading compatible with the Lie 
algebra, inherited from the weight grading of $M_n(\CC)$ as 
$\gl(n,\CC)$-module. Thus for $k,l \in \interval n$ the direct summand 
$\VV^{(k)} \ot \VV^{(l)}_*$ is contained in the homogeneous component of degree 
$e_k - e_l$. This grading highlights some interesting subalgebras of block 
diagonal and block upper-triangular matrices, namely
\begin{align*}
\lie l 
	&= \bigoplus_{i = 1}^n \VV^{(k)} \ot \VV^{(l)}_*;&
\lie u
	& = \bigoplus_{i < j} \lie \VV^{(k)} \ot \VV_*^{(l)};&
\lie p
	&	= \bigoplus_{i \leq j} \lie \VV^{(k)} \ot \VV_*^{(l)},
\end{align*}
which are the subalgebras corresponding to $0$, strictly positive, and 
non-negative $\gl(n,\CC)$-weights, respectively. Notice the obvious Levi-type 
decomposition $\lie p = \lie l \niplus \lie u$, where $\lie l$ is not reductive 
but rather locally reductive, and $\lie u$ is $n-1$-step nilpotent.

\subsection{Subalgebras of $\lie g(\VV^n)$ and their visual representation}
\label{ss:subalgebras-vn}
As before, we will use some visual aids to describe the various subalgebras of
$\lie g(\VV^n)$. We will take $n = 3$ for these visual representations, and 
hence represent a subalgebra of $\lie g(\VV^n)$ by shading regions in a 
three-by-three grid of squares. We refer to these as the \emph{pictures} of the 
subalgebras. The following examples should clarify the idea.
\begin{align*}
\begin{tikzpicture}[scale=0.9]
	\filldraw[color=black!20!white, draw=none]
		(0,3) -- (3,3) -- (3,0) -- (0,3);
	\draw[color=black] (0,3) -- (3,0);
	\draw[step=1cm,black] (0.01,0.01) grid (2.99,2.99); 
	\draw (1.5, -0.5) node {$\lie b$};
\end{tikzpicture}
&& 
\begin{tikzpicture}[scale=0.9]
	\filldraw[color=black!20!white, draw=none]
		(0,3) -- (3,3) -- (3,0) -- (0,3);
	\draw[color=black,dashed] (0,3) -- (3,0);
	\draw[step=1cm,black] (0.01,0.01) grid (2.99,2.99); 
	\draw (1.5, -0.5) node {$\lie n$};
\end{tikzpicture}
\end{align*}

\begin{align*}
\begin{tikzpicture}[scale=0.9]
	\filldraw[color=black!20!white, draw=none]
		(0,3) -- (1,3) -- (1,2) -- (2,2) -- (2,1) -- (3,1) -- (3,0)
		-- (2,0) -- (2,1) -- (1,1) -- (1,2) -- (0,2) -- (0,3);
	\draw[step=1cm,black] (0.01,0.01) grid (2.99,2.99); 
	\draw (1.5, -0.5) node {$\lie l$};
\end{tikzpicture}
&&
\begin{tikzpicture}[scale=0.9]
	\filldraw[color=black!20!white, draw=none]
		(1,3) -- (3,3) -- (3,1) -- (2,1) -- (2,2) -- (1,2) -- (1,3);
	\draw[step=1cm,black] (0.01,0.01) grid (2.99,2.99); 
	\draw (1.5, -0.5) node {$\lie u$};
\end{tikzpicture}
&&
\begin{tikzpicture}[scale=0.9]
	\filldraw[color=black!20!white, draw=none]
		(0,3) -- (0,2) -- (1,2) -- (1,1) -- (2,1) -- (2,0) -- (3,0)
		-- (3,3) -- (0,3);
	\draw[step=1cm,black] (0.01,0.01) grid (2.99,2.99); 
	\draw (1.5, -0.5) node {$\lie p = \lie l \niplus \lie u$};
\end{tikzpicture}
\end{align*}
Recall the subalgebras $\lie g(\VV^n)_r$ for all $r \in \ZZ_{>0}$. We denote by 
$\lie g(\VV^n)[r]$ the subalgebra of $\lie g(\VV^n)_r$ invariants inside $\lie 
g(\VV^n)$, and by $\lie l[r]$ its intersection with $\lie l$.
\begin{align*}
\begin{tikzpicture}[scale=0.9]
	\filldraw[color=black!20!white, draw=none]
		(0,3) rectangle (0.2,2.8) 
		(0.8,3) rectangle (1.2,2.8) 
		(1.8,3) rectangle (2.2,2.8)
		(2.8,3) rectangle (3,2.8)
		(0,2.2) rectangle (0.2,1.8) 
		(0.8,2.2) rectangle (1.2,1.8) 
		(1.8,2.2) rectangle (2.2,1.8)
		(2.8,2.2) rectangle (3,1.8)
		(0,1.2) rectangle (0.2,0.8) 
		(0.8,1.2) rectangle (1.2,0.8) 
		(1.8,1.2) rectangle (2.2,0.8)
		(2.8,1.2) rectangle (3,0.8)
		(0,0.2) rectangle (0.2,0) 
		(0.8,0.2) rectangle (1.2,0) 
		(1.8,0.2) rectangle (2.2,0)
		(2.8,0.2) rectangle (3,0);
	\draw[step=1cm,black] (0.01,0.01) grid (2.99,2.99); 
	\draw (1.5, -0.5) node {$\lie g(\VV^n)_r$};
\end{tikzpicture}
&&\begin{tikzpicture}[scale=0.9]
	\filldraw[color=black!20!white, draw=none]
		(0.2,2.8) rectangle (0.8,2.2)
		(1.2,2.8) rectangle (1.8,2.2)
		(2.2,2.8) rectangle (2.8,2.2)
		(0.2,1.8) rectangle (0.8,1.2)
		(1.2,1.8) rectangle (1.8,1.2)
		(2.2,1.8) rectangle (2.8,1.2)
		(0.2,0.8) rectangle (0.8,0.2)
		(1.2,0.8) rectangle (1.8,0.2)
		(2.2,0.8) rectangle (2.8,0.2);
	\draw[step=1cm,black] (0.01,0.01) grid (2.99,2.99); 
	\draw (1.5, -0.5) node {$\lie g(\VV^n)[r]$};
\end{tikzpicture}
&&\begin{tikzpicture}[scale=0.9]
	\filldraw[color=black!20!white, draw=none]
		(0.2,2.8) rectangle (0.8,2.2)
		(1.2,1.8) rectangle (1.8,1.2)
		(2.2,0.8) rectangle (2.8,0.2);
	\draw[step=1cm,black] (0.01,0.01) grid (2.99,2.99); 
	\draw (1.5, -0.5) node {$\lie l[r]$};
\end{tikzpicture}
\end{align*}
We also introduce the parabolic subalgebra $\lie p(r) = \lie l[r] + \lie b$. 
This algebra again has a Levi-type decomposition $\lie p(k) = \lie l[r]^+ 
\niplus \lie u(k)$, where $\lie l[r]^+ = \lie l[r] + \lie h$ and $\lie u(r)$ is 
the unique ideal giving this decomposition. 
\begin{align*}
\begin{tikzpicture}[scale=0.9]
	\draw[step=1cm,black] (0.01,0.01) grid (2.99,2.99); 
	\filldraw[color=black!20!white, draw=none]
		(0,3) -- (0.2, 2.8) -- (0.2, 2.2) -- (0.8, 2.2) -- 
		(1.2,1.8) -- (1.2,1.2) -- (1.8,1.2) -- (2.2,0.8) -- (2.2,0.2)
		-- (2.8,0.2) -- (3,0) -- (3,3) -- (0,3);
	\draw (1.5, -0.5) node {$\lie p(r)$};
\end{tikzpicture}
&&
\begin{tikzpicture}[scale=0.9]
	\draw (3,0) -- (0,3);
	\filldraw[color=black!20!white, draw=none]
		(0.2,2.8) -- (0.2,2.2) -- (0.8,2.2) -- (0.8,2.8) -- cycle;
	\filldraw[color=black!20!white, draw=none]
		(1.2,1.8) -- (1.2,1.2) -- (1.8,1.2) -- (1.8,1.8) -- cycle;
	\filldraw[color=black!20!white, draw=none]
		(2.2,0.8) -- (2.2,0.2) -- (2.8,0.2) -- (2.8,0.8) -- cycle;
	\draw[step=1cm,black] (0.01,0.01) grid (2.99,2.99); 
	\draw (1.5, -0.5) node {$\lie l[r]^+$};
\end{tikzpicture}
&&
\begin{tikzpicture}[scale=0.9]
	\filldraw[draw=none,black!20!white]
		(0,3) -- (0.2,2.8) -- (0.8,2.8) -- (0.8,2.2) -- 
		(1,2) -- (1.2,1.8) -- (1.8,1.8) -- (1.8,1.2) -- 
		(2,1) -- (2.2,0.8) -- (2.8,0.8) -- (2.8,0.2) --
		(3,0) -- (3,3) -- (0,3);
	\draw[step=1cm,black] (0.01,0.01) grid (2.99,2.99); 
		\draw (1.5, -0.5) node {$\lie u(r)$};
\end{tikzpicture}
\end{align*}

Another family of algebras we will study are related to the finite and infinite
roots of $\lie g(\VV^n)$. We denote by $\lie s$ the subalgebra spanned by all
finite root spaces of $\lie g(\VV^n)$, and by $\lie q$ the parabolic subalgebra 
$\lie s + \lie b(\VV^n)$. This subalgebra has a Levi-type decomposition $\lie 
q = \lie s \niplus \lie m$, where $\lie m$ is the subspace spanned by all root 
spaces corresponding to positive infinite roots. 
\begin{align*}
\begin{tikzpicture}[scale=0.9]
	\filldraw[color=black!20!white, draw=none]
		(0,3) rectangle (0.5,2.5) 
		(0.5,2.5) rectangle (1.5,1.5) 
		(1.5,1.5) rectangle (2.5,0.5)
		(2.5,0.5) rectangle (3,0)
		(0,3) -- (3,0) -- (3,3) -- cycle;
	\draw[step=1cm,black] (0.01,0.01) grid (2.99,2.99); 
	\draw (1.5, -0.5) node {$\lie q$};
\end{tikzpicture} 
&&
\begin{tikzpicture}[scale=0.9]
	\filldraw[color=black!20!white, draw=none]
		(0,3) rectangle (0.5,2.5) 
		(0.5,2.5) rectangle (1.5,1.5) 
		(1.5,1.5) rectangle (2.5,0.5)
		(2.5,0.5) rectangle (3,0);
	\draw[step=1cm,black] (0.01,0.01) grid (2.99,2.99); 
	\draw (1.5, -0.5) node {$\lie s$};
\end{tikzpicture} 
&&
\begin{tikzpicture}[scale=0.9]
	\filldraw[color=black!20!white, draw=none]
		(0.5,3) rectangle (3,2.5) 
		(1.5,2.5) rectangle (3,1.5) 
		(2.5,1.5) rectangle (3,0.5);
	\draw[step=1cm,black] (0.01,0.01) grid (2.99,2.99); 
	\draw (1.5, -0.5) node {$\lie m$};
\end{tikzpicture} 
\end{align*} 
We will also occasionally need the exhaustion of $\lie s$ given by $\lie s_r = 
\lie s \cap \lie g(\VV^n)_r$, whose picture is left for the interested reader.  

\subsection{Transpose automorphism}
Denote by $E_{i,j}^{(k,l)}$ the element $E_{i,j} \otimes e_k \ot e^l$.
The Lie algebra $\lie g(\VV^n)$ has an anti-automorphism $\tau$, analogous to
transposition in $\lie gl(r,\CC)$, given by $\tau(E_{i,j}^{(k,l)}) = 
E_{j,i}^{(l,k)}$. Given a subalgebra $\lie k \subset \lie g(\VV^n)$ its image 
by $\tau$ will also be denoted by $\overline{\lie k}$. Thus we have further 
subalgebras $\overline{\lie b}, \overline{\lie p}, \overline{\lie u}$, etc. 
Notice that the picture of the image of a subalgebra 
by $\tau$ is the reflection of the picture of the subalgebra by the main 
diagonal. We will denote by $T$ the automorphism of $U(\lie g(\VV^n))$ induced 
by $\tau$, and given $u \in U(\lie g(\VV^n))$ we write $\overline u$ 
for $T(u)$.

\subsection{Roots, weights and eligible weights}
We denote by $\epsilon^{(k)}_i$ the functional in $\lie h(\VV^n)^*$ given by
$\epsilon^{(k)}_i(E_{j,j}^{(l,l)}) = \delta_{i,j}\delta_{k,l}$. By a slight 
abuse of notation we can represent any element of $\lie h(\VV^n)^*$ as an 
infinite sum of the form $\sum_{i,k} a_i^{(k)}\epsilon_i^{(k)}$ with $a_i^{(k)}
\in \CC$. We denote by $\omega^{(k)}$ the functional $\omega^{(k)} = \sum_{i 
\in \ZZ^\times} \epsilon_i^{(k)}$. We set $\lie h(\VV^n)^\circ$ to be the span 
of $\{\epsilon^{(k)}_i, \omega^{(k)} \mid i \in \ZZ^\times, k \in \interval 
n\}$, and refer to its elements as eligible weights. A weight will be called 
$r$-eligible if it is a linear combination of the $\omega^{(k)}$ and the 
$\epsilon_i^{(k)}$ with $i \in \pm \interval r$.
As we will see in section \ref{s:rep-theory}, $r$-eligible weights 
parametrise the simple finite dimensional representations of $\lie l[r]^+$, 
which are all $1$-dimensional. The importance of these representations will be 
discussed in that same section.

As usual, $\Lambda$ denotes the $\ZZ$-span of the roots of $\lie g(\VV^n)$ 
inside $\lie h(\VV^n)^*$. It is a free $\ZZ$-module spanned by the families 
of roots
\begin{align*}
\epsilon_i^{(k)} &- \epsilon_{i+1}^{(k)} 
	&& k \in \interval n, i \in \ZZ^\times \setminus \{-1\}; \\
\epsilon_{-1}^{(k)} &- \epsilon_1^{(k+1)}
	&& k \in \interval{n-1}; \\
\epsilon_{-1}^{(k)} &- \epsilon_1^{(k)}
	&& k \in \interval n.
\end{align*}
The first two families consist of finite roots and any finite root is a linear 
combination of them, while the roots in the last family are infinite roots.

We denote by $\Lambda_\CC^+$ the $\CC$-span of the $\epsilon_i^{(k)}$. There is 
a pairing $(-, -) : \lie h(\VV^n)^* \otimes \Lambda_\CC^+ \to \CC$ with 
$(\lambda, \epsilon_i^{(k)}) = \lambda(E_{i,i}^{(k,k)})$. This allows us to 
define for any root $\alpha$ the reflection
\begin{align*}
	s_\alpha: \lie h(\VV^n)^* &\to \lie h(\VV^n)^*\\
	\lambda &\mapsto \lambda - 
		\frac{2 (\lambda,\alpha)}{(\alpha,\alpha)} \lambda.
\end{align*}
The group generated by these reflections is denoted by $\mathcal W$, and 
by analogy with the finite dimensional case is called the \emph{Weyl group} of
$\lie g(\VV^n)$.

Set $\lie h_r = \lie h(\VV^n) \cap \lie g(\VV^n)_r$ and for $\lambda \in \lie 
h(\VV^n)$ denote by $\lambda|_r$ its restriction to $\lie h_r$. Set also 
$\mathcal W_r$ to be the group generated by the reflections $s_\alpha$ 
with $\alpha$ a root of $\lie g(\VV^n)_r$. Clearly $\mathcal W$ is the 
union of the $\mathcal W_r$, and hence the direct limit of the Weyl groups of
the Lie algebras in the exhaustion $\{\lie g(\VV^n)_r\}_{r > 0}$. Furthermore,
if $\sigma \in \mathcal W_r$ and $\lambda \in \lie h(\VV^n)$ then 
$\sigma(\lambda)|_r = \sigma(\lambda|_r)$. The group $\mathcal W$ is 
isomorphic to the group of bijections of $\ZZ^\times \times \interval n$ that 
leave a cofinite set fixed, and its action on $\lie h(\VV^n)^*$ is given by the 
corresponding permutation of the $\epsilon_i^{(k)}$, suitably extended.

There is also an analogue of the dot action of the Weyl group for $\lie 
h(\VV^n)^*$. Set $\rho = \sum_{i,k} -i \epsilon_i^{(k)}$, and set for
every $\sigma \in \mathcal W$ and every $\lambda \in \lie h(\VV^n)^*$
\begin{align*}
	\sigma \cdot \lambda = \sigma(\lambda + \rho) - \rho.
\end{align*}
Both the usual and dot action send eligible weights to eligible weights but 
the dot action of the Weyl group of $\lie g(\VV^n)$ does not restrict to the 
dot action of $\lie g(\VV^n)_r$. However, if we denote by
$\mathcal W(\lie s)$ and $\mathcal W(\lie s_r)$ the groups generated by 
reflections associated to finite roots of $\lie g(\VV^n)$ and $\lie g(\VV^n)_r$ 
respectively, then $\mathcal W^\circ$ is the union of the $\mathcal 
W(\lie s_r)$ and the dot action of $\sigma \in \mathcal W(\lie s_r)$ on $\lie 
h(\VV^n)^*$ does satisfy that $(\sigma \cdot \lambda)|_r = \sigma \cdot
(\lambda|_r)$.

We define an order $<_\fin$ on weights defined as follows: $\mu <_\fin \lambda$
if and only if $\mu = \sigma \cdot \lambda$ for some $\sigma \in \mathcal 
W(\lie s)$ and $\lambda - \mu$ is a sum of positive roots, which necessarily 
will be finite. The following lemma will be used several times in the sequel. The argument originally appeared in the proof of \cite{PS19}*{Lemma 4.10}.
\begin{lem}
\label{lem:linked-weights}
Let $\lambda, \mu$ be $r$-eligible weights, with $\lambda \succeq \mu$. If
$\lambda|_s$ and $\mu|_s$ are linked for all $s \geq r$ then $\mu = \sigma 
\cdot \lambda$, with $\sigma \in \mathcal W(\lie s_{r+1})$. Thus $\lambda - 
\mu$ is a sum of positive finite roots of $\lie g(\VV^n)_r$, and in particular
$\mu <_{\fin} \lambda$.
\end{lem}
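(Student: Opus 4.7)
The plan is to split the proof into three steps, corresponding to the two parts of the conclusion: first, use the dominance hypothesis to reduce to a ``core'' of positions; second, transport the linking to a permutation acting only on the core; and third, upgrade this to a permutation respecting the component structure of $\mathcal W(\lie s_{r+1})$.

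For the first step I would write $\lambda = \sum_k a_0^{(k)} \omega^{(k)} + \sum_{k,\,|i|\leq r} a_i^{(k)} \epsilon_i^{(k)}$ and similarly for $\mu$, and use $\lambda \succeq \mu$ to conclude $a_0^{(k)} = b_0^{(k)}$ for every $k$. The key observation is that $\lambda - \mu$ lies in the $\ZZ$-span of the roots, and each root $\epsilon_i^{(k)} - \epsilon_j^{(l)}$ has zero coefficient of every $\omega^{(k')}$ (since only two $\epsilon$'s participate while $\omega^{(k')} = \sum_i \epsilon_i^{(k')}$ involves infinitely many). Consequently $(\lambda + \rho)$ and $(\mu + \rho)$ agree identically on every tail position $(k,i)$ with $|i| > r$, and $\lambda - \mu$ is supported in the finite-dimensional ``core'' span of $\{\epsilon_i^{(k)} : |i| \leq r\}$.

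For the second step, the linking at level $s \geq r$ supplies $\sigma_s \in \mathcal W_s$ with $\sigma_s(\lambda|_s + \rho|_s) = \mu|_s + \rho|_s$. Since the tail values already coincide positionwise, I can compose $\sigma_s$ with a suitable tail permutation to obtain $\sigma_s' \in \mathcal W_r$ that fixes every position with $|i| > r$ and still realizes the linking, yielding a multiset equality of the $2rn$ core values of $\lambda + \rho$ and $\mu + \rho$.

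The technical heart of the argument is the third step: upgrading $\sigma_s'$ to an element of $\mathcal W(\lie s_{r+1})$, which acts on the core respecting the $n+1$ components cut out by the finite simple roots of $\lie g(\VV^n)_{r+1}$ (the two edge components in blocks $1$ and $n$, and the bridging components across adjacent blocks $k,k+1$). Using that $\lambda - \mu$ is a nonnegative integer combination of positive roots and lives in the core, a telescoping argument should let me cancel any contributions from roots with a position in $|i| > r$, expressing $\lambda - \mu$ as a combination of positive roots of $\lie g(\VV^n)_r$. Since a positive root supported on the core is finite precisely when both its endpoints lie in the same component, this simultaneously ensures the component-wise multiset equality needed to place $\sigma_s'$ in $\mathcal W(\lie s_{r+1})$ and delivers the second part of the conclusion, namely that $\lambda - \mu$ is a sum of positive finite roots of $\lie g(\VV^n)_r$, whence $\mu <_\fin \lambda$. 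I expect this reduction --- ruling out any essential use of infinite roots in expressing $\lambda - \mu$ --- to be the main obstacle, and anticipate that the cleanest implementation exploits the linking simultaneously across all $s \geq r$, together with the specific shape of $\rho$, rather than working at a single level.
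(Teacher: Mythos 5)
Your Steps 1 and 2 are sound and agree with what the paper's proof uses implicitly: eligibility together with $\lambda \succeq \mu$ forces the $\omega^{(k)}$-coefficients to coincide and confines $\lambda-\mu$ to the span of the $\epsilon_i^{(k)}$ with $|i|\leq r$; since $\lambda+\rho_s$ and $\mu+\rho_s$ then agree positionwise on the tail, the level-$s$ linking reduces to a multiset equality of the $2rn$ core values (for the level-$s$ shift $\rho_s$, not the global $\rho$ --- these differ by a non-constant function on the core, so keep them separate). The gap is in Step 3, and it is exactly the one you flag yourself. The telescoping you describe uses only $\lambda\succeq\mu$ and core-supportedness, and what it can yield is that $\lambda-\mu$ is a nonnegative combination of positive roots of $\lie g(\VV^n)_r$ --- but the positive roots of $\lie g(\VV^n)_r$ include the \emph{infinite} roots $\epsilon_i^{(k)}-\epsilon_j^{(k)}$ with $i$ positive, $j$ negative and $|i|,|j|\leq r$, and nothing in that computation excludes them. (Take $n=1$, $\lambda=\epsilon_1$, $\mu=\epsilon_{-1}$: Step 1 and the telescoping go through verbatim, yet $\lambda-\mu$ is an infinite root; what fails is the linking hypothesis, which the telescoping never invokes.) Consequently the sentence ``Since a positive root supported on the core is finite precisely when both its endpoints lie in the same component, this simultaneously ensures the component-wise multiset equality\ldots'' restates the definition of finiteness but does not establish it.

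The missing ingredient is the step the paper's argument turns on: exploiting the linking at \emph{all} levels $s\geq r$ simultaneously. If two core positions lie in different components of $\lie s$, then the number of order-positions of $\lie g(\VV^n)_s$ separating them grows with $s$, so $(\rho_s,\epsilon_i^{(k)}-\epsilon_j^{(l)})$ is unbounded as $s\to\infty$, whereas the values of $\lambda$ and $\mu$ at core positions do not depend on $s$. A matching of core values that moves some position into a different component can therefore be realized for at most finitely many $s$, and the hypothesis forces $\sigma'_s$ to preserve the components of $\lie s_{r+1}$, i.e.\ to lie in $\mathcal W(\lie s_{r+1})$; this is precisely what the paper extracts from the vanishing of $(\sigma_s(\rho_s)-\rho_s,\alpha)$ for all $s$ and all simple roots $\alpha$ of $\lie l[r]$. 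Only with that in hand do the finiteness of the roots occurring in $\lambda-\mu$ and the relation $\mu<_\fin\lambda$ follow.
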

\begin{proof}
Denote by $\sigma_s$ the element in the Weyl group of $\lie g(\VV^n)_s$ such 
that $\sigma_s \cdot \lambda|_s = \mu|_s$, and by $\rho_s$ the half-sum of the 
positive roots of $\lie g_s$. If $\alpha$ is a simple root of $\lie 
g(\VV^n)_s$ that is also a root of $\lie l[r]$ then $(\mu, \alpha) = 0$, so 
$\sigma_s$ can only involve reflections $s_\beta$ with $\beta$ a root of $\lie 
g(\VV^n)_{r+1}$. It follows then that
\begin{align*}
0 = ( \lambda|_s - \sigma_s(\mu|_s), \alpha) 
	&= ( \sigma_s(\rho_s) - \rho_s, \alpha)
\end{align*}
for all $s$ and this is only possible if $\sigma_s$ only involves reflection 
through simple roots, i.e. if $\sigma_s \in \mathcal W(\lie s_{r+1})$.
\end{proof}

\section{Representation theory}
\label{s:rep-theory}
Throughout this section $\lie g$ is a Lie algebra with a fixed splitting Cartan 
subalgebra $\lie h$, and $\lie k$ is a root-subalgebra of $\lie g$ containing 
$\lie h$. 

\subsection{Socle filtrations}
Let $M$ be a $\lie g$-module. Recall that the socle of $M$ is the largest 
semisimple $\lie g$-submodule contained in $M$ and is denoted by $\soc M$. The 
socle filtration of $M$ is the filtration defined inductively as follows: 
$\soc^{(0)} M = \soc M$, and $\soc^{(n+1)} M$ is the preimage of 
$\soc(M / \soc^{(n)} M)$ by the quotient map $M \to M/ \soc^{(n)} M$. The 
\emph{layers} of the filtration are the  semisimple modules 
$\overline{\soc}^{(n)} M = \soc^{(n)} M / \soc^{(n-1)} M$.

\subsection{Categories of weight modules}
Given a $\lie g$-module $M$ and $\lambda \in \lie h^*$ we denote by $M_\lambda$ 
the subspace of $\lie h$-eigenvectors of eigenvalue $\lambda$, and refer to the 
set of all $\lambda$ with $M_\lambda \neq 0$ as the \emph{support} of $M$. 
We say $M$ is a \emph{weight} module if $M = \bigoplus_\lambda M_\lambda$. 
We denote by $\Mod{(\lie g, \lie h)}$ the full subcategory of $\Mod{\lie g}$ 
whose objects are weight modules.

The inclusion of $\Mod{(\lie g, \lie h)}$ in $\Mod{\lie g}$ is an exact functor 
with right adjoint $\Gamma_{\lie h}: \Mod{\lie g} \to \Mod{(\lie g, \lie h)}$ 
that assigns to each module $M$ the largest $\lie h$-semisimple submodule it 
contains. By standard homological algebra, this functor is left exact and sends 
injective objects to injective objects and direct limits to direct limits. In 
particular $\Mod{(\lie g, \lie h)}$ has enough injective objects. Given two 
weight modules $M, N$ we denote by $\Hom_{\lie g, \lie h}(M,N)$ the space of 
morphisms in the category of weight modules, and by $\Ext^\bullet_{\lie g, 
\lie h}(N,M)$ the corresponding derived functors.

Given any $\lie g$-modules $M, N$ the space $\Hom_{\lie g}(M,N)$ is a $\lie 
h$-module in a natural way, and we denote by $\ssHom_{\lie g}(M,N)$ the 
subspace spanned by its $\lie h$-semisimple vectors. If $M$ is a weight
representation then a map $\phi: M \to N$ is semisimple of weight $\lambda$ if
and only if $\phi(M_\mu) \subset N_{\lambda + \mu}$ for any weight $\mu$.

\subsection{Induction and coinduction for weight modules}
The restriction functor $\Res_{\lie k}^{\lie g}: \Mod{(\lie g, \lie h)} \to 
\Mod{(\lie k, \lie h)}$ has a left adjoint, given by the usual induction functor
$\Ind_{\lie k}^{\lie g}: \Mod{(\lie g, \lie h)} \to \Mod{(\lie k, \lie h)}$; 
both functors are exact. We often write just $N$ instead of $\Res_{\lie 
k}^{\lie g} N$. Restriction also has a right adjoint $\ssCoind_{\lie k}^{\lie 
g}: \Mod{(\lie g, \lie h)} \to \Mod{(\lie k, \lie h)}$, given by
\begin{align*}
\ssCoind_{\lie k}^{\lie g} M &= \ssHom_{\lie k}(U(\lie g), M)
\end{align*}
By definition a morphism $\phi: U(\lie g) \to M$ will be in $(\ssCoind_{\lie 
k}^{\lie g} M)_\mu$ if it is $\lie k$-linear and maps the weight space $U(\lie 
g)_\lambda$ to $M_{\lambda + \mu}$. In particular the semisimple coinduction
of $M$ only depends on $\Gamma_{\lie h}(M)$.

Unlike usual coinduction, semisimple coinduction is not exact. However it is 
left exact, sends injective objects to injective objects and direct limits to 
direct limits. This follows from the fact that it is right adjoint to an
exact functor.

\subsection{Semisimple duals}
Suppose we have fixed an antiautomorphism $\tau$ of $\lie g$ which restricts to 
the identity over $\lie h$ (in all our examples $\lie g$ will be $\gl(r,\CC)$ 
or $\gl(\infty)$, and $\tau$ will be the transposition map). Denote by $T$ the 
antiautomorphism of $U(\lie g)$ induced by $\tau$ and by $\overline{\lie k}$ 
the image of $\lie k$ by $\tau$. If $M$ is a $\lie k$-module with structure map 
$\rho: \lie k \to \End(M)$ we can define a $\overline{\lie k}$-module ${}^T M$ 
whose underlying vector space is $M$ and whose structure map is $\rho \circ \tau
:\overline{\lie k} \to \End(M)$. This assignation is functorial and commutes 
with the functor $\Gamma_{\lie h}$. The \emph{semisimple dual} of a $\lie 
k$-module $M$ is the $\overline{\lie k}$-module $M^\vee = {}^T \Gamma_{\lie h}
(M^*)$. The definition is analogous to the duality operator of category 
$\mathcal O$ for finite dimensional reductive Lie algebras. Indeed, we have 
$(M^\vee)_\lambda = (M_\lambda)^*$, and if $f \in M^\vee_\lambda$ then 
$(u \cdot f)(m) = f(T(u)m)$ for all $u \in U(\overline{\lie k})$ and $m \in M$.
We have the following result relating semisimple duals with induction and 
semisimple coinduction functors.

\begin{prop}
\label{prop:ss-dual-coind}
Let $\lie k \subset \lie g$ be a subalgebra containing $\lie h$, and let
$M$ be a weight $\lie k$-module. There is a natural isomorphism
\begin{align*}
(\Ind_{\lie k}^{\lie g} M)^\vee 
	&\cong \ssCoind_{\overline{\lie t}}^{\lie g} M^\vee. 
\end{align*}
In particular semisimple coinduction is exact over the image of the semisimple
dual functor.
\end{prop}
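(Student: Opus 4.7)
The plan is to exhibit the isomorphism directly. Define
\begin{align*}
\Psi : \ssCoind_{\overline{\lie k}}^{\lie g} M^\vee &\to (\Ind_{\lie k}^{\lie g} M)^\vee,\\
\Psi(\phi)(u \otimes m) &= \phi(T(u))(m),
\end{align*}
viewing the right-hand side as a linear functional on $U(\lie g) \otimes_{U(\lie k)} M$. The presence of $T$ is forced by the geometry of the problem: the left $\lie g$-action on $\ssCoind$ comes from \emph{right} multiplication on $U(\lie g)$, while the twisted left $\lie g$-action on $(\Ind_{\lie k}^{\lie g} M)^\vee$ comes from \emph{left} multiplication precomposed with $T$, so a twist is needed to intertwine the two.

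I would first check that $\Psi(\phi)$ descends to the tensor product over $U(\lie k)$. For $k \in \lie k$ one has $T(uk) = \tau(k)\,T(u)$ with $\tau(k) \in \overline{\lie k}$; the $\overline{\lie k}$-linearity of $\phi$ together with the action formula $(\bar k \cdot f)(m) = f(T(\bar k) m)$ on $M^\vee$ yield $\phi(T(uk))(m) = \phi(T(u))(km)$, exactly the needed compatibility. Next, semisimplicity and the weight of $\Psi(\phi)$: since $\tau$ fixes $\lie h$ pointwise but sends $E_{i,j}$ to $E_{j,i}$, $T$ reverses weights in $U(\lie g)$. Hence if $\phi$ has weight $\nu$ and $u \in U(\lie g)_\mu$, then $\phi(T(u)) \in (M^\vee)_{\nu - \mu}$, which pairs nontrivially with $m$ only when $m \in M_{\nu - \mu}$; in that case $u \otimes m$ has weight $\nu$ in $\Ind_{\lie k}^{\lie g} M$, and so $\Psi(\phi)$ is concentrated on weight $\nu$ and lies in $((\Ind_{\lie k}^{\lie g} M)^\vee)_\nu$. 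Finally, $\lie g$-equivariance is immediate from $T^2 = \id$:
\begin{align*}
\Psi(g \cdot \phi)(u \otimes m)
    = \phi(T(u)\,g)(m)
    = \phi(T(T(g) u))(m)
    = \Psi(\phi)(T(g) u \otimes m)
    = (g \cdot \Psi(\phi))(u \otimes m).
\end{align*}

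For bijectivity I would write down the explicit inverse $\Phi(f)(u)(m) = f(T(u) \otimes m)$ and run symmetric checks; the $\overline{\lie k}$-linearity of $\Phi(f)$ follows from $T(\bar k u) = T(u)\,\tau(\bar k)$ combined with the tensor relation in $\Ind_{\lie k}^{\lie g} M$. No individual verification is conceptually hard; the main obstacle is simply keeping the bookkeeping of left versus right actions, the weight conventions for $M^\vee$ and $\ssCoind$, and the location of $T$ consistent throughout.

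The ``in particular'' clause is then automatic. The functor $M \mapsto (\Ind_{\lie k}^{\lie g} M)^\vee$ is the composite of the exact covariant functor $\Ind_{\lie k}^{\lie g}$ with the exact contravariant functor $(-)^\vee$ on weight modules, hence is itself an exact contravariant functor. Via the isomorphism just constructed, this composite is identified with $M^\vee \mapsto \ssCoind_{\overline{\lie k}}^{\lie g} M^\vee$, so $\ssCoind_{\overline{\lie k}}^{\lie g}$ is exact on the essential image of the semisimple dual functor.
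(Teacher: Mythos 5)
Your argument is correct, and it constructs exactly the natural isomorphism the paper uses, but by a more self-contained route. The paper's proof quotes Dixmier's duality $(\Ind_{\lie k}^{\lie g} M)^* \cong \Hom_{\lie k}(U(\lie g), M^*)$ and then derives the statement formally, by applying $\Gamma_{\lie h}$ and the twist ${}^T(-)$ to both sides and using that $T$ induces an equivalence between weight $\lie k$- and $\overline{\lie k}$-modules together with a bimodule isomorphism $U(\lie g) \to {}^T U(\lie g)^T$; unwinding that chain of identifications produces precisely your formula $\Psi(\phi)(u \ot m) = \phi(T(u))(m)$. You instead take this formula as the starting point and verify by hand the balance over $U(\lie k)$, the weight bookkeeping (correctly using that $\tau$ fixes $\lie h$ and negates root spaces, so $T$ reverses weights and $((\Ind_{\lie k}^{\lie g}M)^\vee)_\nu = ((\Ind_{\lie k}^{\lie g}M)_\nu)^*$ is hit on the nose), the $\lie g$-equivariance via $T^2 = \id$, and invertibility via the explicit inverse $f \mapsto (u \mapsto f(T(u)\ot -))$. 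What your route buys is independence from the external citation and complete transparency about where the twist enters; what the paper's route buys is that naturality and the commutation with $\Gamma_{\lie h}$ come for free from the functorial set-up. Your justification of the final exactness clause is the same one-line argument the paper gives.
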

\begin{proof}
By \cite{Dixmier77}*{5.5.4 Proposition} there exists a natural isomorphism
$(\Ind_{\lie k}^{\lie g} M)^* \cong \ssCoind_{\lie t}^{\lie g} M^*$. If we apply
$\Gamma_{\lie h}$ and then twist by the automorphism $T$ we get 
\begin{align*}
(\Ind_{\lie k}^{\lie g} M)^\vee 
	&\cong \Gamma_{\lie h}\left({}^T \Hom_{\lie k}(U(\lie g), M^*) 
		\right).
\end{align*}
The automorphism $T$ defines an equivalence between the categories of weight 
$\lie k$ and $\overline{\lie k}$-modules, so there is a natural isomorphism 
$\Hom_{\lie k}(U(\lie g), M^*) \cong \Hom_{\overline{\lie k}}({}^T U(\lie g), 
{}^T M^*)$. Twisting the left $\lie g$-action on this module by $T$ corresponds
to twisting the right $\lie g$ action of $U(\lie g)$ by $T$, so
\begin{align*}
{}^T \Hom_{\lie k}\left( U(\lie g), M^* \right) &\cong 
			\Hom_{\overline{\lie k}}\left({}^T U(\lie g)^T, {}^T M^* \right)
		\cong \Hom_{\overline{\lie k}}(U(\lie g), {}^T M^*),
\end{align*}
where the last isomorphism comes from the fact that $T: U(\lie g) \to {}^T 
U(\lie g)^T$ defines an isomorphism of $U(\overline{\lie k})$-$U(\lie g)$
bimodules. Thus
\begin{align*}
{}^T \Gamma_{\lie h}\left(\Hom_{\lie k}(U(\lie g), M^*) \right)
	&\cong \ssHom_{\lie k}\left(
		U(\lie g), \Gamma_{\lie h}({}^T M^*))
	\right)
	= \ssCoind_{\overline{\lie k}}^{\lie g} M^\vee.
\end{align*}
The last statement follows from the fact that induction and semisimple duality
are exact functors.
\end{proof}

\subsection{Torsion and inflation}
Let $M$ be a $\lie g$-module. We say that $m \in M$ is $\lie 
k$-\emph{integrable} if for any $g \in \lie k$ the dimension of the span of 
$\{g^r m \mid r \geq 0\}$ is finite.
This happens in particular if $g^r m = 0$ for $r \gg 0$ depending on $g$, and
in this case we say that $M$ is $\lie k$-\emph{locally nilpotent}. We say that
$m$ is $\lie k$-\emph{torsion} if $g^r m = 0$ and $r$ can be chosen 
independently of $g$. We say that $M$ is $\lie k$-integrable, 
nilpotent, or torsion if each of its elements is.

The subspace of $\lie k$-torsion vectors of a $\lie g$-module $M$ is again a
$\lie g$-module, which we denote by $\Gamma_{\lie k}(M)$. This is a left exact
functor, as can be easily checked. Denoting by $J_r$ the right ideal generated
by $\lie k^r$ inside $U(\lie g)$, we see there exists a natural isomorphism
\begin{align*}
\Gamma_{\lie n} \cong \varinjlim \Hom_{\lie g}(U(\lie g)/J_r, -).
\end{align*}

Now suppose $\lie g = \lie k \niplus \lie r$. The projection $\lie g \to \lie 
g/\lie r \cong \lie k$ induces a functor
\begin{align*}
\mathcal I_{\lie k}^{\lie g}: \Mod{(\lie k, \lie h)} 
	\to \Mod{(\lie g, \lie h)}.
\end{align*}
We refer to this as the \emph{inflation} functor. It can be identified with 
$\ssHom_{\lie g}(U(\lie k), -)$ where $U(\lie k)$ is seen as a right $\lie 
g$-module, which implies it is exact and has a left adjoint $M \mapsto U(\lie 
k)\ot_{\lie g} M \cong M/U(\lie g) \lie r M$. We will write $M$ for $\mathcal 
I_{\lie k}^{\lie g}(M)$ when the context makes clear that we are seeing $M$ as
a $\lie g$-module.

\subsection{Weights, gradings and parabolic subalgebras}
Given any morphism of abelian groups $\phi: \lie h^* \to A$, we can turn an
semisimple $\lie h$-module into an $A$-graded vector space by setting $M_a =
\bigoplus_{\phi(\lambda) = a} M_\lambda$. We will denote this $A$-graded vector 
space by $M^\phi$, though we will often omit the superscript when it is clear
from context. The assignation $M^\phi$ is clearly functorial, and in particular
turns $\lie g$ into an $A$-graded Lie algebra, and any weight module into an
$A$-graded $\lie g$-module.

Suppose now that $A = \CC^n$ and that $\phi(\alpha) \in \ZZ^n$ for each root.
For $\chi, \xi \in \CC^n$ we write $\chi \succeq \xi$ if $\chi - \xi \in 
\ZZ_{\geq 0}^n$. We get a decomposition $\lie g = \lie g^\phi_{\prec 
0} \oplus \lie g^\phi_{0} \oplus \lie g^\phi_{\succ 0}$. If we have defined a
set of positive roots $\Phi^+$ along with a corresponding Borel subalgebra 
$\lie b$, and if $\phi(\alpha) \succ 0$ for every positive root, then the 
subalgebra $\lie g^\phi_{\succeq 0}$ is a parabolic subalgebra of $\lie g$.

\begin{ex}
\label{ex:gradings}
Fix $n \in \ZZ_{>0}$ and set $\lie g = \lie g(\VV^n)$.
\begin{enumerate}
\item We fix $n \in \NN$ and $\lie h ^\circ = \lie h(\VV^n)^\circ$. Consider the 
$\CC$-linear map $\phi: \lie h(\VV^n)^\circ \to \CC^n$ given by 
$\phi(\epsilon_i^{(k)}) = \phi(\omega^{(k)}) = e_k$, and extend to $\lie h^*$
arbitrarily. Since roots are mapped to vectors in $\ZZ^n$ this map induces a 
$\ZZ^n$-grading on $\gl(\infty)$, which coincides with the one introduced in 
\ref{ss:vn}. The corresponding parabolic subalgebra is $\lie g_{\succeq 0}^\phi 
= \lie p$ with $\lie g^\phi_0 = \lie l$ and $\lie g^\phi_{\succ 0} = \lie u$.

\item Fix $r > 0$ and denote by $\rho_r$ the sum of all positive roots of $\lie 
s_r$. Set $\theta: \lie h^\circ \to \ZZ$ to be the map $\alpha \mapsto
(\alpha, \rho_r) + (\phi(\alpha), \rho_n)$, where we are seeing $\rho_n$ as a
vector of $\CC^n$ in the usual way. The corresponding parabolic subalgebra is
$\lie p(r)$ with $\lie g^{\theta_r}_0 = \lie l[r]^+$ and $\lie g^{\theta_
r}_{>0} = \lie u(r)$.

\item Let $\psi: \lie h(\VV^n)^\circ \to \CC$ be the map $\psi(\epsilon^{(k)}_i)
= \frac12(n + 1 - 2k + \mathsf{sg}(i))$ (here $\mathsf{sg}(i)$ is $1$ if $i$ is 
a positive integer and $-1$ if it is negative) and $\psi(\omega^{(k)})= 0$. 
This formula has the nice property that if $\alpha$ is a finite root then 
$\psi(\alpha) = 0$, while the infinite roots $\epsilon^{(k)}_{-1} - 
\epsilon^{(k)}_1$ are mapped by $\psi$ to $1$. The corresponding parabolic 
subalgebra is $\lie q$ with $\lie g^\psi_0 = \lie s$ and $\lie g^\psi_{>0} = 
\lie m$.
\end{enumerate}
These maps and the induced gradings will reappear throughout the rest of the
article, so we fix the notations $\phi, \theta$ and $\psi$ for them.
\end{ex}

\subsection{Local composition series}
In general it is hard to establish a priori whether a $\lie g$-module
has a composition series. For this reason we introduce the following notion 
taken from \cite{DGK82}*{Proposition 3.2}. 
\begin{defn}
Let $M$ be a $\lie g$-module and let $\lambda \in \lie h^*$. We say that $M$ 
has a \emph{local composition series at $\lambda$} if there exist a finite 
filtration
\begin{align*}
M = F_0M \supset F_1M \supset \cdots \supset F_tM = \{0\}
\end{align*}
and a finite set $J \subset \{0, 1, \cdots, t-1\}$ such that
\begin{enumerate}[(i)]
\item if $j \in J$ then $F_{j} M / F_{j+1}M \cong L(\lambda_j)$ for some 
$\lambda_j \succeq \lambda$;
\item if $j \notin J$ and $\mu \succeq \lambda$ then $(F_{j-1} M / F_{j}M)_\mu 
= 0$.
\end{enumerate}
We say that $M$ has local composition series, or LCS for short, if it has a 
local composition series at all $\lambda \in \lie h^*$.
\end{defn}
It follows from the definition that a local composition series at $\lambda$ 
induces a local composition series at $\lambda'$ for any $\lambda' \succeq 
\lambda$, possibly with a different set $J$. A standard argument shows that if 
$M$ has two local composition series at $\lambda$ then the multiplicities of 
any simple object $L(\mu)$ in each series coincide. If $M$ has LCS then we 
denote by $[M:L(\mu)]$ this common multiplicity. The class of modules having 
LCS is closed under submodules, quotients and extensions, and multiplicity is 
additive for extensions.

\subsection{Finite dimensional representations of $\gl(\infty)$}
For each $a \in \CC$ the algebra $\gl(\infty)$ has a one-dimensional 
representation $\CC_a$, where $g \in \gl(\infty)$ acts by multiplication by 
$a\tr(g)$; in particular $\CC_a$ is a trivial $\sl(\infty)$-module. These are 
the only simple finite dimensional representations of $\gl(\infty)$, and any 
finite dimensional weight representation is a finite direct sum of these. 
In order to find more interesting representations, we need to look for infinite 
dimensional modules. Finitely generated modules of $\gl(\infty)$ do not form
an abelian category, since $U(\gl(\infty))$ is not left-noetherian, so we need 
to look for an alternative notion of a ``small'' $\gl(\infty)$-module. 

\subsection{The large annihilator condition}
Let $\lie k \subset \gl(\infty)$ be a subalgebra. Let $M$ be a
$\gl(\infty)$-module $M$ and let $m \in M$. We say that $m$ satisfies the 
\emph{large annihilator condition} (LAC from now on) with respect to $\lie k$ 
if there exists a finite dimensional subalgebra $\lie t \subset \lie k$ such 
that $\mathcal [\lie k^{\lie t},\lie k^{\lie t}]$, the derived subalgebra of 
the centraliser of $\lie t$ in $\lie k$, acts trivially on $\CC m$. In other 
words, the annihilator of $m$ contains the ``large'' subalgebra $[\lie 
k^{\lie t},\lie k^{\lie t}]$. We say that $M$ satisfies the LAC if every vector
in $M$ satisfies the LAC. 

The adjoint representation of $\gl(\infty)$ satisfies the LAC with respect to 
itself, and hence with respect to any other subalgebra $\lie k$. If $m \in M$ 
satisfies the LAC with respect to $\lie k$ then the $\gl(\infty)$ module 
generated by $m$ also satisfies the LAC. The tensor product of two 
representations satisfying the LAC again satisfies the LAC.

Denote by $\Mod{(\gl(\infty), \lie h)}_{\mathsf{LA}}^{\lie k}$ the full 
subcategory of modules satisfying the LAC with respect to $\lie k$. The natural 
inclusion of this category in $\Mod{(\gl(\infty), \lie h)}$ has a right 
adjoint, which we denote by $\Phi_{\lie k}$, or just $\Phi$ for simplicity. 
Being right adjoint to an exact functor, this functor is left exact and sends 
direct limits to direct limits and injective objects to injective objects. 

Let us consider the case $\lie k = \gl(\infty)$ in more detail. For simplicity 
we fix the exhaustion $\lie g(\VV)_r$ associated to the order $I = \ZZ^\times$. 
If $M$ satisfies the large annihilator condition and $m \in M$, the finite 
dimensional Lie algebra $\lie t$ is contained in $\lie g(\VV)_r$ for some $r 
\gg 0$, so we might as well take $\lie t = \lie g(\VV)_r$. Now the algebra 
$\gl(\infty)^{\lie t}$ is the subalgebra we denoted by $\lie g(\VV)[r]$ and is 
isomorphic to $\gl(\infty)$. Thus $\CC m$ is a $1$-dimensional representation 
of $\lie g(\VV)_r$ and hence isomorphic as such to $\CC_a$ for some $a \in 
\CC$.

\subsection{The large annihilator condition over $\lie l$}
Recall we have set $\lie l = \lie g_0$ for the $\ZZ^n$-grading introduced in 
subsection \ref{ss:vn}. In the sequel we will mostly consider modules 
satisfying the large annihilator condition with respect to $\lie l$, so we now 
focus on this condition. Since $\lie l = \lie g(\VV^{(1)}) \oplus \cdots \oplus 
\lie g(\VV^{(n)})$, its one-dimensional representations are external tensor 
products of the form $\CC_{a_1} \boxtimes \cdots \boxtimes \CC_{a_n}$, with 
$\chi = (a_1, \ldots, a_n) \in \CC^n$. 

A vector $m \in M$ satisfying the LAC with respect to $\lie l$ must span a 
one-dimensional module over $\lie l[r] \cong \lie l$, for some $r \gg 0$. This 
forces the weight of $m$ to be $r$-eligible and hence in $\lie h(\VV^n)^\circ$. 
In other words, the weight must be of the form
\begin{align*}
\sum_{k=1}^n a_k \omega^{(k)} +
	\sum_{k=1}^n\sum_{i\in \pm \interval r} b_i^{(k)} \epsilon_i^{(k)}.
\end{align*}
We call the $n$-tuple $\chi$ the \emph{level} of $m$, or rather of its weight.
Any vector in the module spanned by $m$ has the same level as $m$, and denoting 
by $M^\chi$ the space of all vectors of level $\chi$ we see that $M = 
\bigoplus_{\chi \in \CC^n} M^\chi$. Thus every vector in an indecomposable 
module $M$ satisfying the LAC with respect to $\lie l$ must have the same 
level, and we define this to be the level of $M$.

For each $r \geq 1$ we set $\Phi_r : \Mod{(\lie g, \lie h)} \to \Mod{(\lie g_r, 
\lie h_r)}$ to be the functor that assigns to a module $M$ the space of 
invariants $M^{\lie l[r]'}$. If $m \in M_\lambda$ spans a trivial $\lie 
l[r]'$-module then $\CC m \cong \CC_\lambda$ as $\lie l[r]^+$-modules. We can 
use this observation to get natural isomorphisms of functors
\begin{align*}
	\Phi_r &\cong \bigoplus_{\lambda} 
		\Hom_{\lie g} (U(\lie g) \ot_{\lie l[r]^+} \CC_\lambda, -); &
	\Phi &\cong \bigoplus_{\lambda \in \lie h^\circ} \varinjlim 
		\Hom_{\lie g} (U(\lie g) \ot_{\lie l[r]^+} \CC_\lambda, -),
\end{align*}
where the first sum is taken over the space of all $r$-eligible weights.

\section{Some categories of representations of $\gl(\infty)$}
\label{s:cats-reps}
Fix $n \in \NN$ and set $\lie g = \lie g(\VV^n)$. We omit the symbol $\VV^n$ 
from this point on and simply write $\lie b, \lie s, \lie g_r$, etc. 
for the subalgebras introduced in Subsection \ref{ss:subalgebras-vn}.

\subsection{Category $\overline{\mathcal O}$ for Dynkin Borel algebras}
\label{ss:nampaisarn}
In this subsection we review some results from Nampaisarn's thesis 
\cite{Nampaisarn17}, where he introduces and studies category 
$\overline{\mathcal O}$. We will state his results for the Dynkin 
subalgebra $\lie s \subset \lie g$ since this will be the only example we will 
need. We set $\lie b_{\lie s} = \lie b \cap \lie s$ and $\lie n_{\lie s} = 
[\lie b_{\lie s}, \lie b_{\lie s}]$. For each $r \in \ZZ_{>0}$ we set $\lie s_r
= \lie s \cap \lie g_r$, which is the Lie subalgebra of $\lie g_r$ spanned by 
finite roots, and is isomorphic to $\gl(r,\CC)^2 \oplus \gl(2r, \CC)^{n-1}$,
in particular it is reductive. 
\begin{defn}
An $\lie s$-module $M$ lies in $\overline{\mathcal O} = \overline{\mathcal 
O}^{\lie s}_{\lie b_{\lie s}}$ if it is $\lie h$-semisimple, $\lie n_{\lie 
s}$-torsion, and $\dim M_\lambda < \infty$ for each $\lambda \in \lie h^*$.
\end{defn}
The definition mimics that of $\mathcal O$ for finite dimensional reductive 
algebras. It makes sense for arbitrary locally reductive algebras with a 
fixed Borel subalgebra and many results from category $\mathcal O_{\lie s_r}$ 
extend to $\overline{\mathcal O}$ in a straightforward manner. On the other hand
the category lacks several obvious modules, for example the adjoint 
representation; if we waive the Dynkin hypothesis then Verma modules are also 
excluded. Still, it contains many interesting objects, and turns out 
to be an important stepping stone in the study of further categories of 
representations.

Since $\lie b_{\lie s}$ is a Dynkin Borel subalgebra, given $\lambda \in \lie 
h^*$ the Verma module $M_{\lie s}(\lambda)$ and its simple subquotient $L_{\lie 
s}(\lambda)$ belong to $\overline{\mathcal O}$. The category is also closed 
under semisimple duals, so the dual Verma module $M(\lambda)^\vee$ also belongs 
to $\overline{\mathcal O}$. A weight $\lambda \in \lie h^*$ is \emph{integral} 
if $(\lambda, \alpha) \in \ZZ$ for any simple root $\alpha$ of $\lie s$, 
\emph{dominant} if $(\lambda + \rho, \alpha) \notin \ZZ_{<0}$, and \emph{almost 
dominant} if $(\lambda + \rho, \alpha) \in \ZZ_{<0}$ for only finitely many
simple roots $\alpha$.

\begin{thm}[\cite{Nampaisarn17}*{Theorem 6.7, Proposition 8.11}]
\label{thm:dynkin-verma}
Let $\lambda, \mu \in \lie h^*$.
\begin{enumerate}[(a)]
\item If $M(\mu) \subset M(\lambda)$ then $\lambda \succeq \mu$ and there 
exists $\sigma \in \mathcal W(\lie s)$ such that $\mu = \sigma \cdot \lambda$.

\item If $M$ is a highest weight module with highest weight vector $m \in 
M_{\lambda}$ then for $r \gg 0$
\begin{align*}
[M: L_{\lie s}(\mu)] &= [U(\lie s_r) m: L_{\lie s_r}(\mu|_r)].
\end{align*}
\end{enumerate}
\end{thm}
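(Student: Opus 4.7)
The plan is to reduce both parts to the classical Verma-module theory of the reductive Lie algebras $\lie s_r$, exploiting the fact that $\lie s = \bigcup_r \lie s_r$ and that $M_{\lie s}(\lambda)$ is free over $U(\lie n^-_{\lie s})$, which is itself a free module over each $U(\lie n^-_{\lie s_r})$.

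For part (a), let $m_\mu \in M_{\lie s}(\mu)_\mu \subset M_{\lie s}(\lambda)$ be the highest weight vector of the submodule. By PBW we can write $m_\mu = u \cdot m_\lambda$ for some $u \in U(\lie n^-_{\lie s})$ of weight $\mu - \lambda$; since $u$ is a finite sum of PBW monomials, $\lambda - \mu$ is a finite non-negative integer combination of positive finite roots, which gives $\lambda \succeq \mu$. Choose $r$ large enough that $u \in U(\lie n^-_{\lie s_r})$. Because $\lie n^-_{\lie s}$ is a free $\lie n^-_{\lie s_r}$-module, the $\lie s_r$-submodule $U(\lie s_r) m_\lambda$ is isomorphic to the classical Verma module $M_{\lie s_r}(\lambda|_r)$, and $m_\mu$ is a singular vector of weight $\mu|_r$ inside it. The BGG linkage theorem for the reductive Lie algebra $\lie s_r$ then yields $\sigma \in \mathcal W(\lie s_r)$ with $\mu|_r = \sigma \cdot \lambda|_r$. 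Since $\lambda$ and $\mu$ coincide on all $\epsilon_i^{(k)}$-directions outside $\lie s_r$ (as $\lambda - \mu$ is supported on roots of $\lie s_r$), the same $\sigma$, regarded as an element of $\mathcal W(\lie s)$, satisfies $\mu = \sigma \cdot \lambda$.

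For part (b), fix $\mu$ and let $m \in M_\lambda$ be a highest weight generator of $M$. Observe that each weight space $M_\mu$ is finite-dimensional (being a quotient of $M_{\lie s}(\lambda)_\mu$, which is finite-dimensional by the argument above) and that every element of $M_\mu$ is obtained by applying a PBW monomial to $m$ whose root vectors lie in $\lie s_r$, provided $r$ is large enough that $\lambda - \mu$ is supported on $\lie s_r$-roots. Thus $M_\mu = (U(\lie s_r)m)_\mu$ for such $r$. The module $U(\lie s_r)m$ is a highest weight $\lie s_r$-module in $\mathcal O_{\lie s_r}$, hence of finite length. A composition series for $U(\lie s_r)m$ gives, after pulling back to $M$ and intersecting with the $\preceq$-filtration at $\mu$, a local composition series of $M$ at $\mu$ whose factors are exactly the simple modules $L_{\lie s}(\nu)$ with $\nu \succeq \mu$ involved in $M$.

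The main obstacle is the bijection between simple $\lie s_r$-factors and simple $\lie s$-factors. Concretely, one must show that for any simple subquotient $L_{\lie s}(\nu)$ with $\nu \succeq \mu$ appearing in the LCS of $M$ at $\mu$, the $\lie s_r$-submodule $U(\lie s_r) m_\nu \subset L_{\lie s}(\nu)$ generated by its highest weight vector equals the simple $\lie s_r$-module $L_{\lie s_r}(\nu|_r)$ for $r$ sufficiently large. Given the compatibility of Verma modules with the exhaustion established in part (a), this amounts to showing that the maximal submodule of $M_{\lie s}(\nu)$ is generated, modulo weights $\not\succeq \mu$, by finitely many singular vectors lying in $U(\lie s_r) m_\nu$ for $r$ large. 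Once this local-finiteness statement is in hand the matching of multiplicities $[M : L_{\lie s}(\nu)] = [U(\lie s_r) m : L_{\lie s_r}(\nu|_r)]$ follows by induction along the composition series of $U(\lie s_r)m$.
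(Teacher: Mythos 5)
First, a point of comparison: the paper does not prove this statement at all --- it is imported from Nampaisarn's thesis (\cite{Nampaisarn17}, Theorem 6.7 and Proposition 8.11) and used as a black box --- so your proposal has to stand on its own. Your part (a) is essentially complete and is the standard argument: write the singular vector as $u\cdot m_\lambda$ with $u$ in $U$ of the opposite nilradical of some $\lie s_r$, identify $U(\lie s_r)m_\lambda$ with $M_{\lie s_r}(\lambda|_r)$ by PBW freeness, and apply BGG linkage for the reductive algebra $\lie s_r$. The one step you should make explicit is the compatibility $(\sigma\cdot\lambda)|_s=\sigma\cdot(\lambda|_s)$ for $\sigma\in\mathcal W(\lie s_r)$, which the paper records in Section 3 and which is needed for your lift of $\sigma$ to $\mathcal W(\lie s)$ to intertwine the two dot actions (the dot action of $\lie s_r$ uses $\rho_{\lie s_r}$, not $\rho|_r$).

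Part (b) contains a genuine gap, which you flag but do not close, and it is precisely the content of the theorem. Two issues. First, the claim that a composition series of the $\lie s_r$-module $U(\lie s_r)m$ ``gives, after pulling back to $M$, a local composition series of $M$ at $\mu$'' is unjustified: an $\lie s_r$-stable flag of $U(\lie s_r)m$ does not extend to an $\lie s$-stable filtration of $M$, and a vector killed by $\lie n_{\lie s}\cap\lie s_r$ need not be killed by $\lie n_{\lie s}$, so there is no a priori correspondence in either direction between the two sets of composition factors. Second, and more fundamentally, even granting an LCS of $M$ at $\mu$ (which does exist, since $M$ is a quotient of $M_{\lie s}(\lambda)$ and has finite-dimensional weight spaces), the equality $[M:L_{\lie s}(\nu)]=[U(\lie s_r)m:L_{\lie s_r}(\nu|_r)]$ requires the stabilization of simple characters, namely $\dim L_{\lie s}(\nu)_\xi=\dim L_{\lie s_r}(\nu|_r)_{\xi|_r}$ for all $\xi\succeq\mu$ and $r\gg0$. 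With that in hand both multiplicities are determined by inverting the same unitriangular system $\dim M_\xi=\sum_{\nu}[\,\cdot\,:L(\nu)]\dim L(\nu)_\xi$, using your (correct) observation that $M_\xi=(U(\lie s_r)m)_\xi$ for $\xi\succeq\mu$ and $r\gg0$; without it nothing forces the $\lie s_r$-multiplicities either to stabilize in $r$ or to agree with the $\lie s$-multiplicities. This character stabilization is exactly the hard step of Nampaisarn's Theorem 6.7; your proposal reduces the theorem to it rather than proving it.
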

It follows that $m(\lambda, \mu) = [M_{\lie s}(\lambda): L_{\lie s}(\mu)]$
coincides with the Kazhdan-Lusztig multiplicities of the corresponding Verma
modules for the finite-dimensional reductive algebra $\lie s_k$.

Let $\lambda$ be a dominant weight and denote by $\overline{\mathcal O}
[\lambda]$ the subcategory of the modules in $\overline{\mathcal O}$ whose 
support is contained in $\{\lambda - \mu \mid \mu \succeq 0\}$. It follows from 
the previous theorem that $\overline{\mathcal O}[\lambda]$ is a block of 
category $\overline{\mathcal O}$. By \cite{Nampaisarn17}*{Theorem 9.9} this 
block has enough injectives, and by \cite{Nampaisarn17}*{Proposition 9.21} 
these injective objects have finite filtrations by dual Verma modules (notice 
Nampaisarn refers to dual Verma modules as costandard modules), and their multiplicities are given by BGG-reciprocity. We put this down as a theorem for 
future reference.

\begin{thm}[\cite{Nampaisarn17}*{Theorem 9.9, Proposition 9.21}]
\label{thm:inj-filtration}
Let $\lambda$ be an almost dominant weight. Then $L_{\lie s}(\lambda)$ has an
injective envelope $I_{\lie s}(\lambda)$ in $\overline{\mathcal O}$. The 
injective envelope has a finite filtration whose layers are modules of the form 
$M(\lambda^{(k)})^\vee$ with $i = 0, \ldots, m$, and such that $\lambda^{(0)} =
\lambda$ and $\lambda^{(k)} >_\fin \lambda$. Furthermore, the multiplicity of
$M(\mu)^\vee$ in this filtration equals $m(\lambda, \mu)$.
\end{thm}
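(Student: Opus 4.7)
The plan is to lift the classical existence of injective envelopes and BGG reciprocity in each finite-dimensional category $\mathcal{O}_{\lie s_r}$ to the limit category $\overline{\mathcal O}$ via the exhaustion $\lie s = \bigcup_r \lie s_r$. Fix an almost dominant weight $\lambda$ and work inside the block $\overline{\mathcal O}[\lambda]$, so that every module we touch has support in $\{\mu \preceq \lambda\}$ and finite-dimensional weight spaces. Almost dominance guarantees that the set of simple roots $\alpha$ for which $(\lambda+\rho,\alpha) \in \ZZ_{<0}$ is finite, a fact that will control the filtration length uniformly.

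First, I would construct $I_{\lie s}(\lambda)$. One route is to use the semisimple coinduction functor $\ssCoind_{\lie b_{\lie s}}^{\lie s}$ from Section \ref{s:rep-theory}, which sends injective $\lie h$-weight $\lie b_{\lie s}$-modules to injectives in $\Mod{(\lie s,\lie h)}$; applying a block-truncation adjoint lands the result in $\overline{\mathcal O}[\lambda]$. The natural candidate $M_{\lie s}(\lambda)^\vee$ sits inside as an essential extension of $L_{\lie s}(\lambda)$ in the category, and one inflates it to an injective by a standard Zorn/union-of-essential-extensions argument, valid because $\dim M_\mu < \infty$ for every $M \in \overline{\mathcal O}$ and every $\mu$, so essential extensions in each weight space eventually stabilise. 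Uniqueness is formal once existence is established.

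Next I would extract the finite dual-Verma filtration. For each $r$, classical BGG reciprocity in $\mathcal O_{\lie s_r}$ gives $I_{\lie s_r}(\lambda|_r)$ a finite filtration by costandard modules $M_{\lie s_r}(\mu|_r)^\vee$ whose multiplicities equal $[M_{\lie s_r}(\mu|_r):L_{\lie s_r}(\lambda|_r)]$. By Theorem \ref{thm:dynkin-verma}(a) every such $\mu$ satisfies $\mu \succeq \lambda$ and $\mu = \sigma \cdot \lambda$ for some $\sigma \in \mathcal W(\lie s)$; combined with Lemma \ref{lem:linked-weights} and almost dominance, this forces the set of such $\mu$ to be finite and to stabilise for $r \gg 0$. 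I would then order the filtration at each level by taking maximal $\mu$ first (so that the layer $M_{\lie s_r}(\mu|_r)^\vee$ appears as a submodule, not merely a subquotient); this rigid choice is preserved under the restriction maps between successive $r$'s, so the finite-level filtrations are compatible and glue to a filtration of $I_{\lie s}(\lambda)$ with layers $M_{\lie s}(\mu)^\vee$. Finally, Theorem \ref{thm:dynkin-verma}(b) identifies the local multiplicity $[M_{\lie s_r}(\mu|_r):L_{\lie s_r}(\lambda|_r)]$ with the global multiplicity $m(\mu,\lambda)$, yielding BGG reciprocity in the desired form.

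The main obstacle is precisely the compatibility of the costandard filtrations along the exhaustion: a given $I_{\lie s_r}(\lambda|_r)$ admits many dual-Verma filtrations, and a careless choice need not restrict from $r+1$ to $r$. The fix is the canonical ordering by highest weights described above, together with the uniform bound on the number of layers coming from almost dominance via Lemma \ref{lem:linked-weights}; once these are in place, the passage to the limit is routine and the finiteness of the resulting filtration follows without further work.
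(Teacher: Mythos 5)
First, a point of order: the paper does not prove this statement at all --- it is imported verbatim from Nampaisarn's thesis (Theorem 9.9 and Proposition 9.21) and recorded ``for future reference,'' so your proposal is being measured against a citation rather than an argument. As a sketch of an actual proof, it has a genuine gap at its central step. You propose to obtain the costandard filtration of $I_{\lie s}(\lambda)$ by gluing the costandard filtrations of the finite-rank injective envelopes $I_{\lie s_r}(\lambda|_r)$, but you never produce the maps $I_{\lie s_r}(\lambda|_r) \to I_{\lie s_{r+1}}(\lambda|_{r+1})$ along which the gluing is supposed to happen, and no such maps come for free: injective envelopes at different ranks are not functorially related, and $I_{\lie s}(\lambda)$ restricted to $\lie s_r$ is very far from $I_{\lie s_r}(\lambda|_r)$ (its $\lie h_r$-weight spaces are infinite dimensional, so it does not even lie in $\mathcal O_{\lie s_r}$). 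Even granting some compatible system of embeddings, a direct limit of injectives need not be injective in $\overline{\mathcal O}$, nor need it remain an essential extension of $L_{\lie s}(\lambda)$; establishing both is precisely the content of Nampaisarn's Theorem 9.9 and is where the real work lies. Your ``canonical ordering by maximal weights'' only addresses the non-uniqueness of a costandard filtration of a \emph{fixed} module; it does not touch the prior problem of identifying the limit object with $I_{\lie s}(\lambda)$. (Relatedly, the ordering is backwards: in a dual Verma filtration of $I(\lambda)$ the \emph{sub}module layer is $M(\lambda)^\vee$, corresponding to the minimal weight occurring, since $L(\lambda) \subset M(\lambda)^\vee \subset I(\lambda)$.)

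Two smaller remarks. Your construction of $I_{\lie s}(\lambda)$ via semisimple coinduction plus block truncation is the right idea and essentially what Nampaisarn does, but as written it skips the verification that the truncated coinduced module has finite-dimensional weight spaces and is $\lie n_{\lie s}$-torsion, i.e.\ that it actually lies in $\overline{\mathcal O}$. On the reciprocity itself you are in better shape than the printed statement: you correctly arrive at the multiplicity $[M_{\lie s}(\mu):L_{\lie s}(\lambda)]=m(\mu,\lambda)$ for the layer $M(\mu)^\vee$, which is the standard form of BGG reciprocity; the theorem as stated writes $m(\lambda,\mu)=[M_{\lie s}(\lambda):L_{\lie s}(\mu)]$, which cannot be intended since the layers satisfy $\mu >_\fin \lambda$ and then $[M_{\lie s}(\lambda):L_{\lie s}(\mu)]=0$ unless $\mu=\lambda$. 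This is a slip in the paper's bookkeeping, not in yours.
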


\subsection{The categories $\TT_{\lie g}$ and $\TT_{\lie l}$}
We now return to our study of representations of $\gl(\infty)$. Throughout this
section we identify $\gl(\infty)$ with $\lie g(\VV)$, and so $\VV$ and $\VV_*$
are $\gl(\infty)$-modules. 
A tensor module of $\gl(\infty)$ is a subquotient of a finite direct sum of 
modules of the form $\VV^p \ot \VV_*^q$ for $p,q \in \NN_0$. 
Tensor modules were first studied by Penkov and Styrkas in \cite{PS11b} and
later by Dan-Cohen, Penkov and Serganova in \cite{DCPS16}; in \cite{SS15} Sam and Snowden study the equivalent category $\operatorname{Rep}^{st} 
\mathbf{GL}(\infty)$. It follows from \cite{DCPS16}*{section 4} that a module 
is a tensor module if and only if it is an integrable module of finite length 
satisfying the LAC with respect to $\gl(\infty)$ and of level $0$. This 
category is denoted by $\TT_{\gl(\infty)}^0$.

The simple tensor modules are parametrised by pairs of partitions $(\lambda, 
\mu)$. The corresponding simple module, which we denote by $L(\lambda, \mu)$, 
is the simple highest weight module with respect to the Borel subalgebra $\lie 
b(\VV)$ with highest weight $\lambda_1 \epsilon_1 + \cdots +\lambda_n 
\epsilon_n - \mu_n \epsilon_{-n} - \cdots - \mu_1 \epsilon_{-1}$. The module 
$I(\lambda, \mu) = 
\Schur_\lambda(\VV) \ot \Schur_\mu(\VV_*)$ belongs to $\TT_{\gl(\infty)}^0$ and 
is injective in this category \cite{DCPS16}*{Corollary 4.6}. The layers of its
socle filtration are given by \cite{PS11b}*{Theorem 2.3}
\begin{align*}
	\overline{\soc}^{(r+1)} I(\lambda,\mu)
		&= \bigoplus_{\gamma \vdash r, \lambda',\mu'} 
			c^{\lambda}_{\lambda',\gamma}c^{\mu}_{\mu',\gamma}
				L(\lambda',\mu').
\end{align*}
In particular $I(\lambda, \mu)$ is the injective envelope of $L(\lambda, 
\mu)$ in $\TT_{\gl(\infty)}^0$.

\begin{lem}
\label{lem:tensor-fg-fl}
Let $M$ be an integrable weight $\gl(\infty)$-module satisfying the large 
annihilator condition with respect to $\gl(\infty)$. Then $M$ has finite length 
if and only if $M$ is finitely generated.
\end{lem}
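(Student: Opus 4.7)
The forward implication, finite length $\Rightarrow$ finitely generated, is a routine induction on length: every simple integrable $\gl(\infty)$-module satisfying the LAC is cyclic (generated by any nonzero vector), and an extension of finitely generated modules is finitely generated.

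For the converse, I would first reduce to the case that $M$ is cyclic, generated by a single weight vector $m$ satisfying the LAC. By the LAC there is $r$ such that $[\lie g(\VV)[r], \lie g(\VV)[r]]$ annihilates $m$, and the one-dimensional quotient of $\lie g(\VV)[r]$ then acts on $\CC m$ through a character $a \cdot \tr$ for some $a \in \CC$. Tensoring with $\CC_{-a}$ preserves the class of integrable weight modules satisfying the LAC, so one may assume $a = 0$ and $M \in \TT_{\gl(\infty)}^0$.

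The goal is then to embed $M$ into a \emph{finite} direct sum of tensor modules $I(\lambda,\mu) = \Schur_\lambda(\VV) \ot \Schur_\mu(\VV_*)$; finite length follows immediately from the socle filtration formula \cite{PS11b}*{Theorem 2.3} recalled above, since each $I(\lambda,\mu)$ has finite socle length (the summation over $\gamma$ is empty once $|\gamma| > \min(|\lambda|,|\mu|)$) and each layer is a finite direct sum of simples. As the $I(\lambda,\mu)$ are the indecomposable injectives of $\TT_{\gl(\infty)}^0$ by \cite{DCPS16}*{Corollary 4.6}, this amounts to showing that $\soc M$ is a finite direct sum of simples with finite multiplicities.

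This final finiteness claim is the technical heart of the argument. Any simple submodule $L(\lambda,\mu) \hookrightarrow M$ corresponds, up to scalar, to a nonzero $\lie b(\VV)$-singular vector in $M$ of weight $\lambda_1 \epsilon_1 + \cdots - \mu_1 \epsilon_{-1}$, so it suffices to bound the dimension of the space of such singular vectors. Integrability forces the constraint $|\lambda| - |\mu| = \sum_i \nu_i$ on any admissible $(\lambda,\mu)$, where $\nu$ is the weight of $m$, and I would combine this with the LAC parameter $r$ together with the finite-dimensionality of the $\lie g(\VV)_r$-module $U(\lie g(\VV)_r) m$ to bound $|\lambda|$ and $|\mu|$ individually and to bound the multiplicity of each simple in $\soc M$, thereby completing the argument.
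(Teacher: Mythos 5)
Your reductions (to the cyclic case, to level $0$) and the forward direction match the paper, but the converse as you outline it has a genuine gap at exactly the point you defer: the claim that one can ``bound $|\lambda|$ and $|\mu|$ individually and bound the multiplicity of each simple in $\soc M$'' from integrability, the LAC parameter $r$, and the finite-dimensionality of $U(\lie g(\VV)_r)m$. These ingredients do not obviously suffice. The constraint $|\lambda|-|\mu|=\sum_i\nu_i$ is just the statement that all weights of $M$ lie in $\nu + \Lambda$, and the LAC gives, for each individual singular vector $w$, some parameter $s(w)$ such that $[\lie g(\VV)[s],\lie g(\VV)[s]]$ kills $w$ — but nothing a priori makes $s(w)$ uniform over the socle, and a uniform bound is essentially equivalent to what you are trying to prove. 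The paper avoids this entirely by going in the opposite direction: instead of \emph{injecting} $M$ into a finite-length module (which requires controlling the socle), it \emph{surjects} onto $M$ from one. Via PBW, $M$ is a quotient of $\SS^\bullet(\overline{\lie r})\ot\SS^\bullet(\lie r)\ot\CC_a$ as a $\lie g(\VV)[r]$-module, and the key input is \cite{DCPS16}*{Lemma 4.1}: there is a \emph{finite-dimensional} $X\subset\lie r$ with $\SS^p(\lie r)$ generated over $\lie g(\VV)[r]$ by $\SS^p(X)$. Only then does integrability (nilpotence of finitely many elements on $m$) yield $\SS^p(\lie r)m=0$ for $p\gg0$, so that $M$ is a quotient of the finite-length tensor module $\SS^{\leq p}(\overline{\lie r})\ot\SS^{\leq p}(\lie r)\ot\CC_a$. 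Some such device for converting elementwise local nilpotence into vanishing of an entire symmetric power is the missing idea in your sketch.

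A secondary problem is your use of injectivity of $I(\lambda,\mu)$. The embedding $M\hookrightarrow\bigoplus I(\lambda_i,\mu_i)$ requires $I(\lambda,\mu)$ to be injective in a category already known to contain $M$; as cited in this paper, \cite{DCPS16}*{Corollary 4.6} gives injectivity in $\TT^0_{\gl(\infty)}$, whose objects have finite length by definition. Using that to conclude $M$ has finite length is circular unless you invoke the stronger statement (injectivity in the category of all integrable LAC modules of level $0$, without the length hypothesis), which you would need to justify separately. If both points were repaired your route would be a legitimate alternative — dual to the paper's, controlling the socle rather than a generating set — but as written the technical heart is asserted rather than proved.
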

\begin{proof}
We will use the isomorphism $\gl(\infty) \cong \lie g(\VV)$ to see $M$ as a 
$\lie g(\VV)$-module. If $M$ has finite length then it is finitely generated. 
To prove the reverse implication it is enough to prove it when $M$ is cyclic, 
so we assume that $M$ is generated by $v \in M_\lambda^a$ for $\lambda \in 
\lie h^*$ and $a \in \CC$. 

Take $r \gg 0$ so $v$ spans a $1$-dimensional $\lie g(\VV)[r]$-module 
isomorphic to $\CC_a$. Take $\lie r$ to be the locally nilpotent ideal of 
$\lie g(\VV)[r] + \lie b$, and $\overline{\lie r}$ to be the opposite ideal, so 
$\gl(\infty) = \lie r \oplus \lie g(\VV)[r]^+ \oplus \overline{\lie r}$, where 
$\lie g(\VV)[r]^+ = \lie g(\VV)[r] + \lie h$.
\begin{align*}
\begin{tikzpicture}
	\draw
		(0,0) rectangle (2,2); 
	\filldraw[color=black!20!white, draw=none]
		(0,2) -- (0.4,1.6) -- (0.4,0.4) -- (1.6,0.4) -- (2,0) -- (0,0) --
		cycle;
	\draw (1,-0.5) node {$\overline{\lie r}$}; 
\end{tikzpicture}
&&
\begin{tikzpicture}
	\draw
		(0,0) rectangle (2,2); 
	\filldraw[color=black!20!white, draw=none]
		(0.4,0.4) rectangle (1.6,1.6);
	\draw (0,2) -- (0.4,1.6);
	\draw (2,0) -- (1.6,0.4);
	\draw (1,-0.5) node {$\lie g(\VV)[r]^+$}; 
\end{tikzpicture}
&&
\begin{tikzpicture}
	\draw
		(0,0) rectangle (2,2); 
	\filldraw[color=black!20!white, draw=none]
		(0,2) -- (0.4,1.6) -- (1.6,1.6) -- (1.6,0.4) -- (2,0) -- (2,2) --
		cycle;
	\draw (1,-0.5) node {$\lie r$}; 
\end{tikzpicture}
\end{align*}
As $\lie g(\VV)[r]$-module, $M$ is isomorphic to a quotient of $\SS^{\bullet}
(\overline{\lie r}) \ot \SS^{\bullet}(\lie r) \ot \CC_a$, while $\lie r$ and
$\overline{\lie r}$ decompose as $\VV[r]^r \oplus \VV_*[r]^r \oplus 
\CC^{2r^2-r}$. By \cite{DCPS16}*{Lemma 4.1} there exists a finite dimensional 
subspace $X \subset \lie r$ such that $\SS^p(\lie r)$ is generated over $\lie 
g(\VV)[r]$ by $\SS^p(X)$ for all $p \geq 0$. On the other hand, since 
$M$ is integrable every element of $X$ acts nilpotently on $M$, and so 
$\SS^p(X)m = 0$, and hence $\SS^p(\lie r)m = 0$, for $p \gg 0$. By an analogous
reasoning, $\SS^p(\overline{\lie r})m = 0$, for $p \gg 0$.

It follows that $M$ is in fact isomorphic as $\lie g(\VV)[r]$-module to a 
quotient of $\SS^{\leq p}(\overline{\lie r}) \ot \SS^{\leq p}(\lie r) \ot 
\CC_a$ for $p \gg 0$. Since $\SS^{\leq p} (\overline{\lie r}) \ot \SS^{\leq p}
(\lie r)$ is a tensor $\lie g(\VV)[r]$-module it has finite length as $\lie 
g(\VV)[r]$-module. Thus $M$ has finite length over $\lie g(\VV)[r]$ and hence 
over $\lie g(\VV)$.
\end{proof}

We now turn back to the case $\lie g = \lie g(\VV^n)$.
We say that an $\lie l$-module is a \emph{tensor module} if it is a subquotient 
of the tensor algebra $T(\VV^{(1)} \oplus \VV^{(1)}_* \oplus \cdots \oplus 
\VV^{(n)} \oplus \VV^{(n)}_*)$. An indecomposable tensor module is isomorphic 
to an external tensor product $M_1 \boxtimes M_2 \boxtimes \cdots \boxtimes M_n$
with each $M_i$ a tensor $\lie g(\VV^{(k)})$-module. Simple tensor modules are 
parametrised by pairs $(\boldsymbol \lambda, \boldsymbol \mu)$ with $\boldsymbol
\lambda = (\lambda_1, \ldots, \lambda_n)$ and $\boldsymbol \mu = (\mu_1, 
\ldots, \mu_n)$ $n$-tuples of partitions, and 
\begin{align*}
	L(\boldsymbol \lambda, \boldsymbol \mu)
		&\cong L(\lambda_1, \mu_1) \boxtimes L(\lambda_2, \mu_2)
			\boxtimes \cdots \boxtimes L(\lambda_n, \mu_n).
\end{align*}
Tensor $\lie l$-modules have finite filtrations whose layers are simple tensor 
modules, and hence they have finite length. It is also clear they are 
integrable and satisfy the LAC by the characterisation of $\gl(\infty)$ tensor 
modules given above. 
\begin{defn}
The category $\TT_{\lie l}$ is the full subcategory of integrable $\lie 
g$-modules of finite length satisfying the LAC. For each $\chi \in \CC^n$ we 
define $\TT_{\lie l}^\chi$ to be the subcategory of $\TT_{\lie l}$ formed by 
modules of level $\chi$. 
\end{defn}
Tensor $\lie l$-modules belong to $\TT^0_{\lie l}$. Each $\TT_{\lie l}^\chi$ is 
a block of $\TT_{\lie l}$. Also $\CC_\chi \in \TT^\chi_{\lie l}$, and tensoring 
with $\CC_{\chi}$ gives an equivalence between $\TT_{\lie l}^0$ and $\TT_{\lie 
l}^\chi$.  
\begin{prop}
Let $M$ be an object of $\TT_{\lie l}$. 
\begin{enumerate}[(a)]
\item If $M$ is simple then it is isomorphic to $\CC_\chi \ot 
L(\boldsymbol \lambda, \boldsymbol \mu)$ for $\boldsymbol \lambda, \boldsymbol 
\mu \in \Part^n$.

\item If $M$ is a highest weight module with highest weight $\lambda$ and 
$\alpha$ is a positive root of $\lie l$ then $(\lambda, \alpha) \in \ZZ_{\geq 
0}$.
\end{enumerate}
\end{prop}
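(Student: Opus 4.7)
For (a), the plan is to reduce to the case $\chi = 0$ by twisting with $\CC_{-\chi}$, and then decompose $M$ as an external tensor product over $\lie l = \lie g_1 \oplus \cdots \oplus \lie g_n$ (where $\lie g_k = \lie g(\VV^{(k)}) \cong \gl(\infty)$), after which the classification of simple tensor modules of $\gl(\infty)$ applies factor by factor. Concretely, I pick a weight vector $v \in M$ and form $M_1 = U(\lie g_1) v$. This is a cyclic, integrable $\lie g_1$-module which satisfies the LAC over $\lie g_1$ (since any $\lie l[r]'$ annihilating $v$ contains the subalgebra $[\lie g_1[r], \lie g_1[r]]$) and is of level zero over $\lie g_1$. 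Lemma~\ref{lem:tensor-fg-fl} then forces $M_1$ to have finite length as a $\lie g_1$-module, so it contains a simple $\lie g_1$-submodule $L_1 \in \TT^0_{\gl(\infty)}$.

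Picking a nonzero $v' \in L_1$, simplicity of $M$ yields $M = U(\lie l) v' = U(\lie g_2 \oplus \cdots \oplus \lie g_n) L_1$, using PBW and the fact that the $\lie g_k$ with $k \geq 2$ commute with $\lie g_1$. For any $x \in U(\lie g_2 \oplus \cdots \oplus \lie g_n)$, the subspace $xL_1$ is a $\lie g_1$-submodule of $M$ and is a $\lie g_1$-equivariant quotient of $L_1$, hence either zero or isomorphic to $L_1$. Thus $M$ is $L_1$-isotypic as a $\lie g_1$-module, which produces an $\lie l$-module decomposition $M \cong L_1 \boxtimes X$ with $X = \Hom_{\lie g_1}(L_1, M)$ carrying an induced $(\lie g_2 \oplus \cdots \oplus \lie g_n)$-action. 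Simplicity of $M$ forces $X$ to be simple, and a routine check confirms that $X$ inherits integrability, the LAC, and level zero, so induction on $n$ applies. The classification of simple objects in $\TT^0_{\gl(\infty)}$ from \cite{DCPS16}*{Section 4} then identifies each factor with some $L(\lambda_k, \mu_k)$, concluding (a).

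For (b), fix a highest weight vector $v \in M_\lambda$, so that $e_\alpha v = 0$ for every positive root $\alpha$ of $\lie l$. The associated $\sl(2)$-triple $\{e_\alpha, h_\alpha, f_\alpha\}$ lies inside $\lie l$, and the cyclic module $U(\sl(2)) v$ is a highest weight $\sl(2)$-module of highest weight $(\lambda, \alpha)$. Since $M$ is integrable, $f_\alpha$ acts locally nilpotently on $v$, so this submodule is finite-dimensional, and the classification of finite-dimensional $\sl(2)$-modules forces $(\lambda, \alpha) \in \ZZ_{\geq 0}$.

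The main obstacle will be the external tensor product decomposition in (a): simple modules over a direct sum of infinite-dimensional Lie algebras do not in general decompose in this way, and the key technical input overcoming this is Lemma~\ref{lem:tensor-fg-fl}, which converts the abstract hypotheses of integrability and the LAC into finite length over $\lie g_1$, thereby supplying the simple $\lie g_1$-submodule needed to seed the isotypic-component reduction.
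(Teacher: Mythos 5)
Your proof is correct, and for part (a) it follows the paper's own strategy: reduce to level zero, apply Lemma \ref{lem:tensor-fg-fl} to the cyclic $\lie g(\VV^{(k)})$-submodule generated by a vector of $M$, and assemble an external tensor product decomposition over $\lie l = \bigoplus_k \lie g(\VV^{(k)})$. The difference is in how that last step is executed. The paper simply takes $L_k = U(\lie g(\VV^{(k)}))v$ for a single $v$ and asserts a surjection $L_1 \boxtimes \cdots \boxtimes L_n \to M$, whereas you run the standard isotypic-component argument: $M$ is a sum of $\lie g_1$-equivariant images of a simple submodule $L_1$, hence $L_1$-isotypic, hence $M \cong L_1 \boxtimes \Hom_{\lie g_1}(L_1, M)$, and then you induct on $n$. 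Your version is more careful (the paper's surjection is not literally well-defined without an argument of this kind), at the cost of needing $\End_{\lie g_1}(L_1) = \CC$ for the evaluation map to be an isomorphism --- which holds since $L_1$ is a simple highest weight module, so this is a harmless extra input worth one line. For part (b) you genuinely diverge: the paper passes to the unique simple quotient, which by (a) is $\CC_\chi \ot L(\boldsymbol\lambda,\boldsymbol\mu)$ and has the same highest weight, and reads off the positivity from the explicit description; you instead apply $\sl(2)$-theory directly to the highest weight vector, using integrability (plus the weight grading) to see that $f_\alpha$ acts nilpotently on $v$ and hence that $U(\sl(2))v$ is finite dimensional. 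Your route is more elementary and self-contained; the paper's makes (b) an immediate corollary of (a). Both are sound.
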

\begin{proof}
Since tensoring with a fixed one-dimensional module gives an equivalence 
between blocks of $\TT_{\lie l}$, to prove $(a)$ it is enough to prove that 
a simple object $L$ in $\TT^0_{\lie l}$ is a simple tensor module. 
Take $v \in L$, and set $L_k$ to be the $\lie g(\VV^{(k)})$-submodule of $L$
spanned by $v$. By definition $L_k$ is integrable and satisfies the LAC
with respect to $\lie g(\VV^{(k)})$, and so by Lemma \ref{lem:tensor-fg-fl}
it is a tensor $\lie g(\VV^{(k)})$-module. Since the $\lie g(\VV^{(k)})$ commute
with each other inside $\lie l$, there is a surjective map $L_1 \boxtimes L_2 
\boxtimes \cdots \boxtimes L_n \to L$, so $L$ is a quotient of a tensor module,
and hence a simple tensor module. To prove $(b)$, notice that if $M$ is a 
highest weight module in $\TT_{\lie l}$ then its unique simple quotient also 
lies in $\TT_{\lie l}$ and so it must be of the form $\CC_\chi \otimes L$ with 
$L$ a simple tensor module. The statement is easily proved for the highest 
weight of this simple module.
\end{proof}

\subsection{The category $\tilde \TT_{\lie l[r]}$}
Recall that 
weights in $\lie h^\circ$ are called eligible weights, and a weight is 
$r$-eligible if it is in the span of $\{\omega^{(k)}, \epsilon_i^{(k)} \mid k 
\in \interval n, i \in \pm \interval r\}$. 

Let $\lambda \in \lie h^*$. We denote by $\mathcal D(\lambda)$ the set of all 
weights $\lambda - \mu$ with $\mu \succeq 0$. Let $M$ be a weight $\lie 
g$-module. A weight $\lambda$ is said to be \emph{extremal} if $M_\lambda \neq 
0$ and $M_\mu = 0$ for all $\mu \succ \lambda$. If a module $M$ has finitely 
many extremal weights $\lambda_1, \ldots, \lambda_k$ then its support is 
contained in the union of the $\mathcal D(\lambda_i)$.

\begin{defn}
\label{defn:t-tilde}
The category $\tilde \TT_{\lie l[r]}$ is the full subcategory of $\Mod{(\lie 
l[r]^+, \lie h)}$ whose objects are modules $M$ satisfying the following 
conditions.
\begin{enumerate}[(i)]
\item $M$ has finitely many extremal weights. 
\item For each $\mu \in \lie h^*$ the $\lie l[r]$-module generated 
by $M_{\succeq \mu} = \bigoplus_{\nu \succeq \mu} M_\nu$ lies in 
$\TT_{\lie l[r]}$.
\end{enumerate}
\end{defn}

The following lemma is an easy consequence of the definition.
\begin{lem}
\label{lem:locally-tensor}
The category $\tilde \TT_{\lie l[r]}$ is closed under direct summands, finite 
direct sums and finite tensor products.
\end{lem}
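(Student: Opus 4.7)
The plan is to verify the two defining conditions of $\tilde\TT_{\lie l[r]}$---namely (i) finite extremal weights, and (ii) that the $\lie l[r]$-module generated by $M_{\succeq \mu}$ lies in $\TT_{\lie l[r]}$---for each of the three operations separately. Two closure properties of $\TT_{\lie l[r]}$ will be used repeatedly: it is closed under subquotients (immediate from the characterisation via integrability, finite length, and the LAC), and under finite tensor products (via the external tensor product description of simple tensor modules together with the analogous closure of $\TT_{\gl(\infty)}^0$).

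For a direct summand $N$ of $M\in\tilde\TT_{\lie l[r]}$, the extremal weights of $N$ form a subset of those of $M$, and $N_{\succeq \mu}$ generates a submodule of the module generated by $M_{\succeq \mu}$; closure of $\TT_{\lie l[r]}$ under subquotients then yields (ii). For a finite direct sum $M_1\oplus M_2$, one takes unions of extremal weights for (i) and sums of the generated submodules for (ii).

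The main work is tensor products. For (i), every extremal weight $\lambda$ of $M_1\otimes M_2$ must decompose as $\lambda=\lambda_1+\lambda_2$ with each $\lambda_i$ extremal in $M_i$: otherwise one replaces a non-extremal $\lambda_i$ by a strictly larger weight in the support of $M_i$, producing a weight of $M_1\otimes M_2$ strictly greater than $\lambda$. Finiteness of extremal weights on each side then gives (i). For (ii), the key observation is that whenever $\lambda_1+\lambda_2\succeq\mu$ with $\lambda_i$ in the support of $M_i$, and $\lambda_i^{(k_i)}$ is an extremal weight of $M_i$ with $\lambda_i\preceq\lambda_i^{(k_i)}$ (which exists since the supports are contained in the downward sets of extremal weights), then $\lambda_1\succeq\mu-\lambda_2^{(k_2)}$ and $\lambda_2\succeq\mu-\lambda_1^{(k_1)}$. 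Grouping the weight spaces of $(M_1\otimes M_2)_{\succeq \mu}$ according to such a pair $(k_1,k_2)$ yields the finite inclusion
\[
(M_1\otimes M_2)_{\succeq \mu} \;\subseteq\; \sum_{i,j}(M_1)_{\succeq \mu-\lambda_2^{(j)}}\otimes(M_2)_{\succeq \mu-\lambda_1^{(i)}}.
\]
Because the $\lie l[r]$-action on a tensor product is by the coproduct, the $\lie l[r]$-submodule generated by each summand on the right is contained in the tensor product of the submodules of $M_1$ and $M_2$ generated by the factors, both of which lie in $\TT_{\lie l[r]}$ by hypothesis on $M_1,M_2$. Closure of $\TT_{\lie l[r]}$ under tensor products and subquotients then delivers (ii).

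The main obstacle is condition (ii) in the tensor-product case: one has to control simultaneously the weights of both tensor factors that contribute to the slab $(M_1\otimes M_2)_{\succeq\mu}$. The finiteness of extremal weights on each side is exactly the ingredient that enables the finite cover by doubly bounded slabs used above; without it, even the inclusion displayed above could fail to reduce the problem to $\TT_{\lie l[r]}$.
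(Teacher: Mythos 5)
The paper gives no argument for this lemma (it is dismissed as ``an easy consequence of the definition''), so your write-up is supplying details the author omits. Your treatment of finite direct sums and, more importantly, of tensor products is correct: the observation that every extremal weight of $M_1 \otimes M_2$ decomposes as a sum of extremal weights, and the covering of $(M_1\otimes M_2)_{\succeq\mu}$ by finitely many ``doubly bounded slabs'' whose generated submodules sit inside tensor products of modules in $\TT_{\lie l[r]}$, is exactly the content that makes the lemma non-vacuous, and the coproduct argument closing the loop is fine.

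There is, however, one step that is literally false: for a direct summand $N$ of $M$, the extremal weights of $N$ need \emph{not} form a subset of those of $M$. If $M = \CC_\lambda \oplus \CC_{\lambda-\alpha}$ (trivial $\lie l[r]$-actions, $\alpha$ a positive root), then $\lambda-\alpha$ is extremal in the summand $N=\CC_{\lambda-\alpha}$ but not in $M$, since $M_\lambda\neq 0$ and $\lambda\succ\lambda-\alpha$. What a summand \emph{does} inherit is the containment of its support in a finite union $\mathcal D(\lambda_1)\cup\cdots\cup\mathcal D(\lambda_k)$ of downward cones, which is the form of condition (i) the paper actually uses in practice (cf.\ Example \ref{ex:vkl} and the proof of Proposition \ref{prop:ur-computation}), and which also behaves well under $\oplus$ and $\otimes$; your tensor-product argument for (ii) already relies on this cone formulation rather than on the extremal weights themselves. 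If one insists on the literal count of extremal weights, deducing finiteness for a summand from the cone containment requires an additional (Dickson-type) argument ruling out infinite antichains in the support, so the cleanest repair is to verify condition (i) throughout in its cone form. With that adjustment the rest of your proof goes through.
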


\begin{ex}
\label{ex:vkl}
We have decompositions
\begin{align*}
\VV^{(k)} 
	&=V^{(k)}_{-,r} \oplus \VV^{(k)}[r] \oplus V^{(k)}_{+,r}
& \VV^{(k)}_* 
	&= \left(V^{(k)}_{-,r}\right)^* \oplus \VV^{(k)}_*[r] \oplus 
		\left(V^{(k)}_{+,r}\right)^*
\end{align*}
where $V^{(k)}_{\pm,r} = \langle v_i \ot e_k \mid i \in \pm \interval r 
\rangle$. Each of these modules is a tensor $\lie l[r]$-module: $\VV^{(k)}[r]$ 
is the natural representation of $\lie g(\VV^{(k)}[r])$, while $\VV^{(l)}_*[r]$ 
is the conatural representation of $\lie g(\VV^{(l)}[r])$, and the other four 
spaces are finite direct sums of $1$-dimensional $\lie h$-modules with trivial 
$\lie l[r]$ action. Since tensor modules are closed by finite direct sums, it 
follows that both are tensor modules over $\lie l[r]$. 

Now consider $\VV^{(k,l)} = \VV^{(k)} \ot \VV^{(l)}_*$. From the previous 
example we get a decomposition of this space into nine summands, illustrated in 
the picture below. 
\begin{align*}
\begin{tikzpicture}
	\draw 
		(0,1.3) -- (5,1.3) (0,3.7) -- (5,3.7) (1.3,0) -- (1.3,5) 
			(3.7,0) -- (3.7,5);
	\filldraw [pattern=dots,draw=none] 
		(0,0) rectangle (1.3,1.3) (0,3.7) rectangle (1.3,5) 
		(3.7,0) rectangle (5,1.3) (3.7,3.7) rectangle (5,5);
	\draw (2.5,0.7) node {$\VV_*^{(l)}[r]^r$};
	\draw (2.5,4.3) node {$\VV_*^{(l)}[r]^r$};
	\draw (0.5,2.5) node {$\VV^{(k)}[r]^r$};
	\draw (4.5,2.5) node {$\VV^{(k)}[r]^r$};
	\draw (2.5,2.5) node {$\VV^{(k,l)}[r]$};
	\node [below right, text width=6cm,align=justify] at (6,5)
	{$\VV^{(k)} \ot \VV^{(l)}_*$ decomposes as $\lie l[r]$-module as a direct 
	sum of:
	\begin{itemize}
	\item[-] $\VV^{(k,l)}[r] = \VV^{(k)}[r] \ot \VV^{(l)}_*[r]$;
	\item[-] $2r$ copies of $\VV^{(k)}[r]$;
	\item[-] $2r$ copies of $\VV^{(l)}_*[r]$; 
	\item[-] a trivial module of dimension $4r^2$.
	\end{itemize}
	};
\end{tikzpicture}
\end{align*}

The fact that $\TT_{\lie l[r]}$ is closed by direct sums, direct summands and 
tensor products implies that $\SS^p(\VV^{(k,l)})$ is again in $\TT_{\lie l[r]}$. 
We claim that if $k > l$ then $\SS^\bullet(\VV^{(k,l)})$ is in $\tilde 
\TT_{\lie l[r]}$. Indeed, its support is contained in $\mathcal D(0)$, and 
furthermore for each $\lambda \in \lie h^\circ$ we have $\SS^\bullet
(\VV^{(k,l)})_{\succeq \lambda} \subset \bigoplus_{p=1}^{-\phi(\lambda)} 
\SS^{p}(\VV^{(k,l)})$, which lies in $\TT_{\lie l[r]}$; here $\phi$ is the map 
introduced in Example \ref{ex:gradings}. On the other hand if $k\leq l$ then 
the symmetric algebra is not in $\tilde \TT_{\lie l[r]}$, as it has no extremal 
weights.
\end{ex}

We will mostly be interested in studying $\lie g$-modules whose restriction to 
$\lie l[r]^+$ lies in $\tilde \TT_{\lie l[r]}$. We now show that such a module 
has LCS. The statement and proof are similar to \cite{DGK82}*{Proposition 3.2}, 
but we replace the dimension of $M_{\succeq \lambda}$, which could be infinite 
in our case, with the length of the $\lie l[r]$-module it spans. 
\begin{prop}
\label{prop:lcs}
Let $M$ be a $\lie g$-module lying in $\tilde \TT_{\lie l[r]}$. Then $M$ has 
LCS. 
\end{prop}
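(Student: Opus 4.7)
The plan is to adapt the argument of \cite{DGK82}*{Proposition 3.2}, inducting (at each fixed $\lambda \in \lie h^*$) on the quantity $\ell = \ell_{\lie l[r]}\bigl(U(\lie l[r]) M_{\succeq \lambda}\bigr)$, which is finite by condition (ii) of Definition \ref{defn:t-tilde}. The paper explicitly signals this substitution of $\dim M_{\succeq \lambda}$ by an $\lie l[r]$-length, and this is the point on which the whole argument hinges: in contrast to category $\mathcal O$ for Kac-Moody algebras, the weight spaces $M_{\succeq \lambda}$ may well be infinite-dimensional here, but condition (ii) precisely guarantees that the $\lie l[r]$-module they generate has finite length.

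If $\ell = 0$ then $M_{\succeq \lambda} = 0$, and the trivial filtration $M \supsetneq 0$ with $J = \varnothing$ is a local composition series at $\lambda$. Otherwise I produce a highest weight vector as follows: since $M$ has finitely many extremal weights and its support is therefore contained in $\bigcup_i \mathcal D(\mu_i)$, every weight $\succeq \lambda$ in the support of $M$ is dominated by some extremal weight, which is then automatically $\succeq \lambda$. Fix such an extremal weight $\mu$ and a nonzero $v \in M_\mu$. Then $\lie n v = 0$, so $N = U(\lie g) v = U(\overline{\lie n}) v$ is a highest weight $\lie g$-module with unique maximal proper submodule $N'$ satisfying $N/N' \cong L(\mu)$.

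The key step is to verify that $\ell$ drops strictly on passing to both $N'$ and $M/N$. For $N'$: the vector $v$ lies in $N_{\succeq \lambda}$ but not in $N'$ (it generates $L(\mu)$), so $U(\lie l[r]) N'_{\succeq \lambda} \subsetneq U(\lie l[r]) N_{\succeq \lambda} \subseteq U(\lie l[r]) M_{\succeq \lambda}$ strictly, and the length decreases. For $M/N$: the exact sequence $0 \to N \to M \to M/N \to 0$ induces a surjection of $\lie l[r]$-modules $U(\lie l[r]) M_{\succeq \lambda} \twoheadrightarrow U(\lie l[r]) (M/N)_{\succeq \lambda}$ whose kernel contains the nonzero submodule $U(\lie l[r]) v$, so $\ell$ again drops strictly. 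Invoking the inductive hypothesis at $\lambda$ for both $N'$ and $M/N$, I lift the LCS of $M/N$ to a chain of submodules of $M$ containing $N$, insert the simple layer $N/N' \cong L(\mu)$ (contributing an index to $J$ because $\mu \succeq \lambda$), and append the LCS of $N'$, obtaining the desired LCS for $M$ at $\lambda$.

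The main obstacle is calibrating the inductive class so that it is closed under subquotients: neither $M/N$ nor $N'$ is obviously in $\tilde \TT_{\lie l[r]}$, because a quotient can acquire new extremal weights. I would sidestep this by running the induction over the slightly larger class of $\lie g$-modules whose support is contained in a finite union of cones $\mathcal D(\mu_i)$ and for which $\ell_{\lie l[r]}(U(\lie l[r]) (-)_{\succeq \mu})$ is finite for every $\mu$. This class contains $\tilde \TT_{\lie l[r]}$, and it is closed under subquotients — the support of a subquotient stays inside the original cones and the $\lie l[r]$-length only decreases — so the inductive machinery goes through unchanged.
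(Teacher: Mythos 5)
Your argument is essentially identical to the paper's: the same induction on the $\lie l[r]$-length of $U(\lie l[r])M_{\succeq \lambda}$, the same filtration $0 \subset N' \subset N \subset M$ through a highest weight submodule generated by a vector of extremal weight, and the same refinement of this filtration via the inductive hypothesis applied to $N'$ and $M/N$. Your explicit check that the length drops for both $N'$ and $M/N$, and your enlargement of the inductive class to one visibly closed under subquotients, supply details the paper leaves implicit but do not change the method.
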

\begin{proof}
Fix $\mu$ in the support of $M$ and denote by $N(\mu)$ the $\lie l[r]$-submodule
spanned by $M_{\succeq \mu}$. The proof proceeds by induction on the length of 
$N(\mu)$, which we denote by $\ell$. If $\ell = 0$ then $0 \subset M$ is the 
desired composition series. In any other case we have $\mu \prec \lambda$ for 
some $\lambda$ which is maximal in the support of $M$. Taking $v \in M_\lambda$ 
we see that $V = U(\lie g) v$ is a highest weight module and hence has a unique 
maximal submodule $V'$. We then have a filtration
\begin{align}
0 \subset V' \subset V \subset M.
\end{align}
Since $v \in N(\mu)$ by hypothesis, $N'(\mu) = N(\mu) \cap V' \subsetneq N$ has 
length strictly less than $l$. Using this module and the induction hypothesis 
we know there exists an LCS at $\mu$ for $V'$. The same applies to $N(\mu)/
(N(\mu) \cap V) \subseteq M/V$, so $M/V$ also has a LCS at $\mu$. These LCS can 
be used to refine the filtration $V' \subset V$ into an LCS of $M$ at $\mu$.
\end{proof}

\subsection{The symmetric algebras of parabolic radicals}
\label{ss:sym-algs}
Recall that we have introduced for each $r \geq 0$ the subalgebra $\lie p(r)
= \lie l[r] + \lie b$. This subalgebra has a Levi-type decomposition $\lie p(r)
= \lie l[r]^+ \niplus \lie u(r)$, which in turn induces a decomposition 
$\lie g = \overline{\lie u}(r) \oplus \lie l[r]^+ \oplus \lie u(r)$.
The pictures of these subalgebras are given in subsection \ref{ss:visual-rep}.
We will now study the structure of $\SS^\bullet(\overline{\lie u}(r))$ as 
$\lie l[r]^+$-module, as this will be essential in the sequel.

Using the the decomposition of $V^{(k,l)}$ given in 
Example \ref{ex:vkl} it is easy to see that $\overline{\lie u}(r)$ is isomorphic
as $\lie l[r]$-module to
\begin{align*}
	Z \oplus 
		\left(\bigoplus_{k=1} \VV^{(k)}[r] \ot Y_k\right) \oplus
		\left(\bigoplus_{l=1} X_l \ot \VV^{(l)}_*[r]\right) \oplus
		\left(\bigoplus_{n \geq k>l \geq 1} \VV^{(k)}[r] \ot\VV^{(l)}_*[r]
			\right)
\end{align*}
for some trivial $\lie l[r]$-modules $X_k, Y_l, Z$, which we will study in more 
detail.

The above decomposition of $\overline{\lie u}(r)$ is in fact a decomposition 
into indecomposable $\overline{\lie b}_r \oplus \lie l[r]$-modules, where 
$\overline{\lie b}_r = \overline{\lie b} \cap \lie g_r$. Indeed, $Z$ 
coincides with the subalgebra $\overline{\lie n}_r = \overline{\lie n} \cap 
\lie g_r$. The modules $X_k$ and $Y_l$ can be described recursively as
\begin{align*}
X_{n} &= V_{+,r}^{(n)} 
	& X_{k-1} &= X_k \oplus V_{-,r}^{(k)}  \oplus V_{+,r}^{(k-1)}; \\
Y_{1} &= (V_{+,r}^{(1)})^* 
	& Y_{k+1} &= Y_k \oplus (V^{(k-1)}_{-,r})^* \oplus (V^{(k)}_{+,r})^*.
\end{align*}
It follows from this that $Y_k$ is isomorphic to the $\overline{\lie 
b}_r$-submodule of the conatural representation of $\lie g_r$ spanned by the 
unique vector of weight $-\epsilon^{(k)}_r$, and $X_l$ is isomorphic to the 
$\overline{\lie b}_r$-submodule of the natural representation of $\lie g_r$ 
spanned by the unique vector of weight $\epsilon^{(l)}_{-r}$. 

\begin{prop}
\label{prop:ur-computation}
The symmetric algebra $\SS^\bullet(\overline{\lie u}(r))$ is an object of 
$\tilde \TT_{\lie l[r]}$.
\end{prop}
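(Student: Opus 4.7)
The plan is to verify conditions (i) and (ii) of Definition \ref{defn:t-tilde} separately, leveraging the explicit $\lie l[r]$-decomposition of $\overline{\lie u}(r)$ given in the preceding paragraphs.

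For (i) I would observe that every weight of $\overline{\lie u}(r)$ is of the form $-\alpha$ for a positive root $\alpha$ of $\lie g$ not lying in $\lie l[r]$, and hence is strictly negative. So every weight of $\SS^N(\overline{\lie u}(r))$ with $N \geq 1$ is strictly negative, while $\SS^0 = \CC$ carries only the weight $0$. Thus $0$ is the unique maximal weight in the support, and therefore also the unique extremal weight.

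The main content is (ii), for which my tool will be the $\ZZ$-grading $\theta$ of Example \ref{ex:gradings}(2); recall that $\lie g^\theta_0 = \lie l[r]^+$ and $\lie g^\theta_{>0} = \lie u(r)$, so $\theta$ is integer valued on the root lattice and satisfies $\theta(\alpha) \geq 1$ for every positive root of $\lie u(r)$, while positive roots of $\lie l[r]$ have $\theta = 0$. Each weight $\nu$ of $\SS^N(\overline{\lie u}(r))$ is a sum of $N$ weights of the form $-\alpha$ with $\alpha$ a positive root of $\lie u(r)$, so $\theta(\nu) \leq -N$; on the other hand $\nu \succeq \mu$ forces $\theta(\nu) \geq \theta(\mu)$, since positive roots of $\lie g$ all have $\theta \geq 0$. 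Combining I would get $N \leq -\theta(\mu)$, and hence
\begin{align*}
\SS^\bullet(\overline{\lie u}(r))_{\succeq \mu}
    \subseteq \bigoplus_{N = 0}^{-\theta(\mu)} \SS^N(\overline{\lie u}(r)),
\end{align*}
which is understood to be zero when $\mu \notin \lie h^\circ$ or $\theta(\mu) > 0$.

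To close the argument I would then show that each $\SS^N(\overline{\lie u}(r))$ is itself a tensor $\lie l[r]$-module. The decomposition of subsection \ref{ss:sym-algs} expresses $\overline{\lie u}(r)$ as a finite direct sum of tensor $\lie l[r]$-modules (using that $X_l$, $Y_k$ and $Z$ are finite dimensional), and the standard identities $\SS^N(A \oplus B) = \bigoplus_{i+j=N} \SS^i(A) \ot \SS^j(B)$ and $\SS^\bullet(V \ot W) = \bigoplus_\lambda \Schur_\lambda(V) \ot \Schur_\lambda(W)$ identify $\SS^N(\overline{\lie u}(r))$ as a finite direct sum of tensor products of Schur functors applied to tensor $\lie l[r]$-modules. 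Since $\TT_{\lie l[r]}$ is closed under finite direct sums, tensor products, and subquotients (and hence under Schur functors), we conclude. The $\lie l[r]$-submodule generated by $\SS^\bullet(\overline{\lie u}(r))_{\succeq \mu}$ then sits inside the finite direct sum displayed above, and is therefore itself a tensor module. The hard part is the weight bound coming from $\theta$, which upgrades the ``above $\mu$'' condition into a finiteness condition on symmetric-algebra degree; after that, only bookkeeping of the closure properties of $\TT_{\lie l[r]}$ remains.
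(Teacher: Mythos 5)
Your proof is correct, but it reaches condition (ii) by a genuinely different mechanism than the paper. The paper factors $\SS^\bullet(\overline{\lie u}(r))$ as the tensor product of the symmetric algebras of the indecomposable $\overline{\lie b}_r \oplus \lie l[r]$-summands $Z$, $\VV^{(k)}[r]\ot Y_k$, $X_l \ot \VV^{(l)}_*[r]$, $\VV^{(k)}[r]\ot\VV^{(l)}_*[r]$, invokes Lemma \ref{lem:locally-tensor} to reduce to each factor, and then argues each of the four cases separately (finite-dimensionality of $\SS^\bullet(\overline{\lie n}_r)_{\succeq\mu}$, negativity of the support of $Y_k$, positivity of the support of $X_l$, and the $\phi$-grading of Example \ref{ex:vkl}). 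You replace all four cases with a single uniform bound: since $\theta$ is nonnegative on all positive roots and strictly positive --- indeed integral, as $(\alpha,\rho_r)\in\ZZ$ and $(\phi(\alpha),\rho_n)\in\ZZ$ --- on the roots of $\lie u(r)$, any weight $\nu\succeq\mu$ of $\SS^N(\overline{\lie u}(r))$ forces $N\le -\theta(\mu)$. This is exactly the trick the paper itself uses with $\phi$ in Example \ref{ex:vkl} and with $\theta$ later in Proposition \ref{prop:a-in-ola}, so your argument is consistent with the paper's toolkit and is arguably cleaner and less ad hoc; what you give up is the explicit control over which Schur-functor constituents survive in each tensor factor, which the paper's case analysis foreshadows and which is exploited in the appendix computation of Lemma \ref{lem:W}. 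Your treatment of condition (i) (the unique extremal weight is $0$) and your reduction of ``each $\SS^N(\overline{\lie u}(r))$ is a tensor $\lie l[r]$-module'' to the closure properties of $\TT_{\lie l[r]}$ under finite sums, tensor products, Schur functors and submodules are both sound.
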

\begin{proof}
To study the symmetric algebra $\SS^\bullet(\overline{\lie u}(r))$ we look at 
the symmetric algebra of each summand in the decomposition of $\overline{\lie 
u}(r)$ as $\overline{\lie b}_r \oplus \lie l[r]$-module separately.
\begin{align*}
	\SS^\bullet(\overline{\lie n}_r) 
		&= \bigoplus_{k \geq 0} \SS^k \left(\overline{\lie n}_r\right) \\
	\SS^\bullet\left(\VV^{(k)}[r] \ot Y_{k}\right) 
		&= \bigoplus_{\nu_k} \Schur_{\nu_k}\left(\VV^{(k)}[r]\right)
			\otimes \Schur_{\nu_k}\left(Y_{k}\right) & (k = 1, \ldots, n)\\
	\SS^\bullet\left(X_{l} \otimes \VV^{(l)}_*[r] \right) 
		&= \bigoplus_{\gamma_l} \Schur_{\gamma_i}\left(X_{l}\right) 
			\otimes \Schur_{\gamma_l}\left(\VV^{(l)}_*[r]\right) 
				& (l = 1, \ldots, n)\\
	\SS^\bullet\left(\VV^{(k)}[r] \ot \VV^{(l)}_*[r]\right)
		&= \bigoplus_{\rho_{k,l}} \Schur_{\rho_{k,l}}\left(\VV^{(k)}[r]\right) 
			\ot \Schur_{\rho_{k,l}}\left(\VV^{(l)}_*[r]\right) 
				&(1 \leq l < k \leq n)
\end{align*}
By Lemma \ref{lem:locally-tensor} it is enough to see that each symmetric 
algebra lies in $\tilde \TT_{\lie l[r]}$, and in each case we will show that if 
$\mu \in \lie h^*$ is in the support then it is in the support of finitely 
many of the indecomposable summands. 

\emph{Case 1: $\SS^\bullet(\overline{\lie n}_r)$}. If $\mu$ is any weight in 
the support then $\SS^\bullet(\overline{\lie n}_r)_{\succeq \mu}$ must be 
finite dimensional. In particular it can only intersect $\SS^k(\overline{\lie 
n}_r)$ for finitely many $k \in \ZZ_{>0}$.

\emph{Case 2: $\SS^\bullet\left(\VV^{(k)}[r] \ot Y_k\right)$}. Fix a weight
$\mu$ in the support. Recall that $\mu|_r$ and $\mu[r]$ denote the restriction 
of $\mu$ to $\lie h_r$ and $\lie h[r]$, respectively. Then
\begin{align*}
	\left(\Schur_{\nu}\left(\VV^{(k)}[r]\right) 
		\ot \Schur_{\nu}(Y_k)\right)_\mu
		&= \Schur_{\nu}\left(\VV^{(k)}[r]\right)_{\mu[r]} 
		\ot \Schur_{\nu}(Y_k)_{\mu|_r}.
\end{align*}
Since the support of $Y_{k}$ consists entirely of negative weights, 
$\Schur_{\nu}(Y_{k})_{\succeq \mu|_r} = 0$ whenever $|\nu|$ is larger than the 
height of $\mu|_r$ as a weight of $\overline{\lie b}_r$. 

\emph{Case 3: $\SS^\bullet\left(X_k \ot \VV^{(k)}_*[r]\right)$}. As in case 
$2$ we have a decomposition
\begin{align*}
	\left(\Schur_{\nu}\left(\VV^{(l)}_*[r])\right) 
		\ot \Schur_{\nu}(X_{l})\right)_\mu
		&= \Schur_{\nu}\left(\VV^{(l)}[r]_*\right)_{\mu[r]} 
		\ot \Schur_{\nu}(X_{l})_{\mu|_r}.
\end{align*}
Now the support of $X_l$ consists entirely of positive weights, and the 
argument is analogous to the previous case.

\emph{Case 4: $\SS^\bullet\left(\VV^{(k)}[r] \ot \VV^{(l)}_*[r]\right)$}.
This was done in example \ref{ex:vkl}.
\end{proof}
Notice that while $\SS^p(\lie u(r))$ is a tensor $\lie l[r]$-module for any 
$p$, $\SS^\bullet(\lie u(r))$ is \emph{not} in $\tilde \TT_{\lie l[r]}$.

\subsection{Large annihilator duality}
\label{ss:lac}
The categories $\TT_{\lie g}$ and $\TT_{\lie l}$ are not closed under 
semisimple duals. For example $\lie h^* = \lie g^\vee_0$ contains uncountably 
many vectors that do not satisfy the LAC, so $\lie g^\vee$ is not in 
$\TT_{\lie g}$. For this reason we introduce the \emph{large annihilator dual} 
of a $\lie l$-module, which we denote by $\dual M$ and set to be $\Phi_{\lie l}
(M^\vee)$. We use the same notation when $M$ is a $\lie g$-module, seen as 
$\lie l$-module by restriction. The assignation $M \mapsto \dual M$ is  
left exact, but since it is a composition of continuous functors it sends 
colimits in $\Mod{(\lie g, \lie h)}$ into limits. 

Given two $n$-tuples of partitions $\boldsymbol \lambda, \boldsymbol \mu$ we set
\begin{align*}
I(\boldsymbol \lambda, \boldsymbol \mu)
	&= I(\lambda_1, \mu_1) \boxtimes \cdots \boxtimes I(\lambda_n, \mu_n).
\end{align*}
The socle filtration of these modules can be deduced from the socle filtration 
of the $I(\lambda_i, \mu_i)$, and as a direct consequence we see that there is 
a nonzero morphism $I(\boldsymbol \lambda, \boldsymbol\mu) \to \CC$ if and only 
if $\boldsymbol \lambda = \boldsymbol \mu$, and in that case this map is unique.

\begin{lem}
\label{lem:la-duality}
The large annihilator dual of $I(\boldsymbol \lambda, \boldsymbol \mu)$ is
isomorphic to
\begin{align*}
\bigoplus_{\boldsymbol \alpha, \boldsymbol \beta}
\bigoplus_{\boldsymbol \gamma} \left(
	\prod_k c_{\alpha_k, \gamma_k}^{\lambda_k} c_{\beta_k, \gamma_k}^{\mu_k}
\right) I(\boldsymbol \alpha, \boldsymbol \beta),
\end{align*}
where the sum runs over all $n$-tuples of partitions $\boldsymbol \alpha, 
\boldsymbol \beta$ and $\boldsymbol \gamma$. In particular $\dual{I(\boldsymbol 
\lambda, \boldsymbol \mu)}$ lies in $\TT_{\lie l}^0$.
\end{lem}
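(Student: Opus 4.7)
The plan is to describe $\dual{I(\boldsymbol\lambda,\boldsymbol\mu)}$ as a directed colimit of $\lie l[r]'$-invariant subspaces of $I(\boldsymbol\lambda,\boldsymbol\mu)^\vee$, compute each such subspace as the semisimple dual of the $\lie l[r]'$-coinvariants, and pass to the limit as $r\to\infty$. Since $\lie l[r]'=\bigoplus_k\sl(\VV^{(k)}[r])$ splits along the blocks of $\lie l$ and each summand acts only on the corresponding tensor factor, the coinvariants factor as $I(\boldsymbol\lambda,\boldsymbol\mu)_{\lie l[r]'}=\boxtimes_k I(\lambda_k,\mu_k)_{\sl(\VV^{(k)}[r])}$, and (as each factor will turn out to have finite-dimensional weight spaces) both the semisimple dual and the direct limit over $r$ distribute across the external tensor product. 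This reduces the problem to the case $n=1$, from which the full formula is recovered by reassembly.

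For $n=1$ I decompose $\VV=\VV[r]\oplus F_r$ with $F_r=V_{-,r}\oplus V_{+,r}$, on which $\sl(\VV[r])$ acts trivially, and apply the Schur-functor direct-sum rule to expand
\begin{align*}
I(\lambda,\mu)=\bigoplus_{\xi,\eta,\gamma,\delta}c^\lambda_{\xi,\gamma}c^\mu_{\eta,\delta}\bigl(\Schur_\xi\VV[r]\otimes\Schur_\eta\VV_*[r]\bigr)\otimes\bigl(\Schur_\gamma F_r\otimes\Schur_\delta F_r^*\bigr).
\end{align*}
The key identity is $(\Schur_\xi\VV[r]\otimes\Schur_\eta\VV_*[r])_{\sl(\VV[r])}=\delta_{\xi,\eta}\CC$: the Penkov--Styrkas socle formula (\cite{PS11b}*{Theorem 2.3}) shows that $L(\emptyset,\emptyset)$ is the unique trivial composition factor and occurs with multiplicity $\delta_{\xi,\eta}$ at the top of the socle filtration, hence realised as a quotient; and since both $\sl(\VV[r])$- and $\gl(\VV[r])$-coinvariants of any tensor module are concentrated in weight zero, where the two algebras act identically, this single trivial quotient exhausts the $\sl(\VV[r])$-coinvariant space. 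Substituting collapses the sum to
\begin{align*}
I(\lambda,\mu)_{\sl(\VV[r])}=\bigoplus_{\xi,\gamma,\delta}c^\lambda_{\xi,\gamma}c^\mu_{\xi,\delta}\,\Schur_\gamma F_r\otimes\Schur_\delta F_r^*,
\end{align*}
which has finite-dimensional weight spaces and is self-dual under $\vee$.

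In the limit $r\to\infty$ the finite-dimensional piece $F_r$ exhausts $\VV$, so $\Schur_\gamma F_r\otimes\Schur_\delta F_r^*$ filters up to $\Schur_\gamma\VV\otimes\Schur_\delta\VV_*=I(\gamma,\delta)$, giving $\dual{I(\lambda,\mu)}=\bigoplus_{\xi,\gamma,\delta}c^\lambda_{\xi,\gamma}c^\mu_{\xi,\delta}\,I(\gamma,\delta)$. Relabelling $(\xi,\gamma,\delta)\mapsto(\gamma,\alpha,\beta)$ and invoking the symmetry $c^\lambda_{\alpha,\gamma}=c^\lambda_{\gamma,\alpha}$ of Littlewood--Richardson coefficients produces the stated formula. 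The support constraints $|\alpha_k|+|\gamma_k|=|\lambda_k|$ and $|\beta_k|+|\gamma_k|=|\mu_k|$ imposed by the non-vanishing of the Littlewood--Richardson coefficients ensure the sum is finite, placing $\dual{I(\boldsymbol\lambda,\boldsymbol\mu)}$ in $\TT_\lie l^0$. The main obstacle is the coinvariant identity at the heart of the computation: verifying that the top socle layer of $\Schur_\xi\VV[r]\otimes\Schur_\eta\VV_*[r]$ exhausts the maximal trivial quotient (and not merely a single composition factor), and reconciling $\sl(\VV[r])$-coinvariants with the cleaner $\gl(\VV[r])$-coinvariants read off from the Penkov--Styrkas formula.
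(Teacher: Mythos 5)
Your proposal is correct and follows essentially the same route as the paper: decompose each $\VV^{(k)}$ into its finite part and $\VV^{(k)}[r]$, expand $I(\boldsymbol\lambda,\boldsymbol\mu)$ via Schur functors and Littlewood--Richardson coefficients, identify the $\lie l[r]'$-invariants of the dual with the dual of the coinvariants, use the Penkov--Styrkas socle formula to show the relevant coinvariant space is $\delta_{\gamma,\delta}\CC$, and pass to the limit in $r$. Your extra care in reconciling $\sl(\VV[r])$- with $\gl(\VV[r])$-coinvariants addresses a point the paper's proof glosses over (it silently replaces $\ssHom_{\lie l[r]'}$ by $\ssHom_{\lie l[r]}$), but this is a refinement of the same argument rather than a different one.
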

\begin{proof}
Set $\lie l_r = \lie l \cap \lie g_r$. There exist decompositions of
$\lie l_r \oplus \lie l[r]$-modules
\begin{align*}
\VV^{(k)} &= V^{(k)}_r \oplus \VV^{(k)}[r]; &
	\VV^{(k)}_* &= \left(V^{(k)}_r\right)^* \oplus \VV^{(k)}_*[r].
\end{align*}
Using the decomposition of the image of a direct sum by a Schur functor we get
\begin{align*}
	I(\lambda,\mu)
		&= \bigoplus_{\alpha, \beta,\gamma,\delta}
			c_{\alpha,\gamma}^\lambda c_{\beta,\delta}^\mu
			\Schur_\alpha(V^{(k)}_r) 
				\otimes 
			\Schur_\beta\left( \left(V^{(k)}_r \right)^* \right) \boxtimes
			I(\gamma,\delta)[r],
\end{align*}
where $I(\gamma,\delta)[r] = \Schur_\gamma(\VV^{(k)}[r]) \otimes 
\Schur_\delta(\VV^{(k)}_*[r])$, and this is an isomorphism of $\lie l_r \oplus 
\lie l[r]$-modules. 

Notice that twisting the modules $\VV^{(k)}$ and $\VV^{(k)}_*$ by the 
automorphism $-\tau$ produces isomorphic $\lie l$-modules. Since twisting by 
$-\tau$ commutes with Schur functors, it follows that ${}^{-\tau}I(\boldsymbol 
\lambda, \boldsymbol \mu) \cong I(\boldsymbol \lambda, \boldsymbol \mu)$. 
Computing the semisimple duals we get
\begin{align*}
	(I(\boldsymbol \lambda, \boldsymbol \mu))^\vee
		&\cong \ssHom_{\CC}
			({}^{-\tau}I(\boldsymbol \lambda, \boldsymbol \mu),\CC)
		\cong \ssHom_{\CC}
			(I(\boldsymbol \lambda, \boldsymbol \mu), \CC).
\end{align*}
Now writing $I(\boldsymbol \lambda, \boldsymbol \mu)$ as the tensor product of
the $I(\lambda_i, \mu_i)$ and using the above decomposition, we get
\begin{align*}
(I(\boldsymbol \lambda, \boldsymbol \mu))^\vee
	&\cong 
		\ssHom_{\CC} \left(
			\bigotimes_{k=1}^n
	T_{\alpha_k,\beta_k,\gamma_k,\delta_k}^{(k)} \boxtimes
		I(\gamma_k, \delta_k),\CC
	\right)
\end{align*}
where
\begin{align*}
T_{\alpha,\beta,\gamma,\delta}^{(k)} &= 
	\bigoplus_{\gamma,\delta}
	c_{\alpha,\delta}^{\lambda} 
		c_{\beta,\gamma}^{\mu}
			\left(\Schur_\alpha(V_r^{(k)}) \otimes 
				\Schur_{\beta}((V_r^{(k)})^*)
				\right).
\end{align*}
This last module is a semisimple finite dimensional $\lie l_r$-module, and 
hence isomorphic to its semisimple dual. Using the fact that duals 
distribute over tensor products if one of the factors is finite 
dimensional, we obtain an isomorphism of $\lie l_r \oplus \lie l[r]$-modules
\begin{align*}
I(\boldsymbol \lambda, \boldsymbol \mu)
	\cong \bigoplus_{\boldsymbol \alpha, \boldsymbol \beta, 
		\boldsymbol \gamma, \boldsymbol \delta}
			\left(
				\bigotimes_{k=1}^n T_{\alpha_k,\beta_k,\gamma_k,\delta_k}^{(k)}
			\right)
	\boxtimes \ssHom_{\CC} \left(
		I(\boldsymbol \gamma, \boldsymbol \delta)[r], \CC
	\right).
\end{align*}
Now to compute the image of this module by $\Phi_r$ only the $\lie l[r]$-module
structure is relevant, and 
\begin{align*}
\Phi_r\left(\ssHom_{\CC} \left(
		I(\boldsymbol \gamma, \boldsymbol \delta)[r], \CC
	\right)\right)
	&\cong
	\ssHom_{\lie l[r]} \left(
		I(\boldsymbol \gamma, \boldsymbol \delta)[r], \CC
	\right)
	\cong
	\ssHom_{\lie l} \left(
		I(\boldsymbol \gamma, \boldsymbol \delta), \CC
	\right).
\end{align*}
As mentioned in the preamble this is the zero vector space unless $\boldsymbol
\gamma = \boldsymbol \delta$, in which case it is one-dimensional. Thus 
$\Phi_r(I(\boldsymbol \lambda, \boldsymbol \mu)^\vee)$ is isomorphic to 
\begin{align*}
\bigoplus_{\boldsymbol \alpha, \boldsymbol \beta, 
		\boldsymbol \gamma}
			\left(
				\bigotimes_{k=1}^n T_{\alpha_k,\beta_k,\gamma_k,\gamma_k}^{(k)}
			\right)
&=
\bigoplus_{\boldsymbol \alpha, \boldsymbol \beta, 
		\boldsymbol \gamma}
\bigotimes_{k=1}^n
c_{\alpha_k,\gamma_k}^{\lambda_k} 
		c_{\beta_k,\gamma_k}^{\mu_k}
			\left(\Schur_{\alpha_k}(V_r^{(k)}) \otimes 
				\Schur_{\beta_k}((V_r^{(k)})^*)
				\right).
\end{align*}
Taking the limit as $r$ goes to infinity, we see that $\dual{I(\boldsymbol 
\lambda, \boldsymbol \mu)}$ is isomorphic to
\begin{align*}
\bigoplus_{\boldsymbol \alpha, \boldsymbol \beta, 
		\boldsymbol \gamma}
\bigotimes_{k=1}^n
c_{\alpha_k,\gamma_k}^{\lambda_k} 
		c_{\beta_k,\gamma_k}^{\mu_k}
			\left(\Schur_{\alpha_k}(\VV^{(k)}) \otimes 
				\Schur_{\beta_k}(\VV_*^{(k)})
				\right),
\end{align*}
and this precisely the module in the statement.
\end{proof}
The following is an immediate consequence of the last two results.
\begin{prop}
\label{p:ur-ladual}
The large annihilator dual of $\SS^\bullet(\overline{\lie u}(r))$ lies in 
$\tilde \TT_{\lie l[r]}$.
\end{prop}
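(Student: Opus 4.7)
The plan is to combine the decomposition of $M = \SS^\bullet(\overline{\lie u}(r))$ as an $\lie l[r]$-module used in the proof of Proposition~\ref{prop:ur-computation} with Lemma~\ref{lem:la-duality}, applied termwise, and then verify the two conditions of Definition~\ref{defn:t-tilde}. Explicitly, the identity $\SS^\bullet(A \oplus B) \cong \SS^\bullet(A) \otimes \SS^\bullet(B)$ together with the Cauchy-type decompositions of each factor written down in that proof presents $M \cong \bigoplus_{i \in \mathcal I} W_i \otimes T_i$ as $\lie l[r]$-modules, where $\mathcal I$ is countable, each $W_i$ is a finite-dimensional trivial $\lie l[r]$-module (built from the $Y_k$, $X_l$, and $\overline{\lie n}_r$), and each $T_i$ is a finite outer tensor product of modules of the form $\Schur_\nu(\VV^{(k)}[r])$ and $\Schur_\gamma(\VV^{(l)}_*[r])$. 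In particular each $T_i$ is an injective tensor $\lie l[r]$-module of level $0$.

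First I would show that $\dual{(\cdot)}$ respects this direct sum, i.e.~that $\dual M \cong \bigoplus_{i \in \mathcal I} \dual{(W_i \otimes T_i)}$ as $\lie l[r]$-modules. The key observation is that each weight space $M_\lambda$ is finite-dimensional: since $\overline{\lie u}(r)$ has only strictly negative weights, any weight of $\SS^\bullet(\overline{\lie u}(r))$ is reached by finitely many monomials in a basis of $\overline{\lie u}(r)$. Hence for fixed $\lambda$ only finitely many $(W_i \otimes T_i)_\lambda$ are nonzero and $(M_\lambda)^* = \bigoplus_i ((W_i \otimes T_i)_\lambda)^*$, whence $M^\vee \cong \bigoplus_i (W_i \otimes T_i)^\vee$. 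The functor $\Phi_{\lie l}$ commutes with this direct sum as well: a weight vector $f = \sum f_i$ with only finitely many nonzero $f_i$ satisfies the LAC if and only if each $f_i$ does, the union of witnesses serving as a common witness.

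With this in place, Lemma~\ref{lem:la-duality} (applied factorwise to the decomposition $\lie l[r] \cong \bigoplus_k \lie g(\VV^{(k)}[r])$) gives $\dual{(W_i \otimes T_i)} \cong W_i^* \otimes \dual{T_i} \in \TT_{\lie l[r]}^0$ for every $i$. To conclude, one verifies the two conditions of Definition~\ref{defn:t-tilde}. For (i): the unique extremal weight of $M$ is $0$, realised by $\SS^0 = \CC$, and the same must hold for $\dual M$ since its support is contained in that of $M$, with the dual of $1 \in \SS^0$ giving a LAC vector of weight $0$. For (ii): fix $\mu \in \lie h^*$; since $M$ itself lies in $\tilde \TT_{\lie l[r]}$ by Proposition~\ref{prop:ur-computation}, the submodule $U(\lie l[r]) M_{\succeq \mu}$ has finite length, so by the $\lie l[r]$-equivariance of the decomposition there exists a finite set $F(\mu) \subset \mathcal I$ with $(W_i \otimes T_i)_{\succeq \mu} = 0$ for $i \notin F(\mu)$. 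The same $F(\mu)$ controls $(\dual M)_{\succeq \mu}$, which therefore lies in the finite direct sum $\bigoplus_{i \in F(\mu)} \dual{(W_i \otimes T_i)}$, a finite-length object of $\TT_{\lie l[r]}$; the $\lie l[r]$-submodule it generates then has finite length as well.

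The main obstacle is the compatibility of large-annihilator duality with the countable direct sum decomposition of $M$; this rests crucially on the finite-dimensionality of the weight spaces of $M$, without which the semisimple dual would only decompose as a direct product rather than a direct sum. Once that is settled, the remainder is essentially bookkeeping: Lemma~\ref{lem:la-duality} handles each summand, and $M \in \tilde \TT_{\lie l[r]}$ provides the finiteness of $F(\mu)$ needed for condition (ii).
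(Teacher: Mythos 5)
Your overall strategy coincides with the paper's intended one (decompose $\SS^\bullet(\overline{\lie u}(r))$ as in Proposition~\ref{prop:ur-computation}, dualize termwise via Lemma~\ref{lem:la-duality}, and use Proposition~\ref{prop:ur-computation} again to get the finiteness in condition (ii)), but the step you yourself flag as the main obstacle is resolved by a false claim. The weight spaces of $M=\SS^\bullet(\overline{\lie u}(r))$ are \emph{not} finite dimensional. Already for $n=1$: the vectors $E_{i,1}E_{-1,i}\in\SS^2(\overline{\lie u}(r))$, for $i\in\ZZ^\times\setminus\pm\interval{r}$, are linearly independent and all have weight $\epsilon_{-1}-\epsilon_1$. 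Negativity of the weights of $\overline{\lie u}(r)$ does not bound the number of monomials of a given weight, because a negative root such as $\epsilon_{-1}-\epsilon_1$ is a sum of two negative roots in infinitely many ways. Conceptually, your summands $T_i$ include tensor modules such as $\VV^{(k)}[r]\otimes\VV^{(k)}_*[r]$, whose zero weight space is infinite dimensional; this is exactly why Lemma~\ref{lem:la-duality} is not a triviality. Consequently $(M_\lambda)^*\neq\bigoplus_i\bigl((W_i\otimes T_i)_\lambda\bigr)^*$, and the asserted isomorphism $M^\vee\cong\bigoplus_i(W_i\otimes T_i)^\vee$ fails: $M^\vee$ is only a weight-wise direct product of the duals of the summands.

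The statement you actually need, $\dual{M}\cong\bigoplus_i\dual{(W_i\otimes T_i)}$, is true, but for a different reason that must be argued. First, each fixed weight $\lambda$ meets only finitely many summands $W_i\otimes T_i$; this does not follow from negativity of the weights either, but can be extracted from the support analysis in the proof of Proposition~\ref{prop:ur-computation}, which bounds the partitions indexing the contributing summands in terms of the restriction $\lambda|_r$ (the paper makes the analogous assertion for $\mathcal H$ in the appendix). Second, for each $s$ the space $\Phi_s(M^\vee)_\lambda$ is the dual of the $\lie l[s]'$-coinvariants of $M_\lambda$, hence decomposes as a finite direct sum over the summands meeting $\lambda$; letting $s\to\infty$ gives the termwise decomposition of $\dual{M}$. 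This is precisely the mechanism of the proof of Lemma~\ref{lem:la-duality}. With that step repaired, the rest of your argument goes through: each $T_i$ is a finite direct sum of modules $I(\boldsymbol\alpha,\boldsymbol\beta)[r]$, so Lemma~\ref{lem:la-duality} makes $\dual{(W_i\otimes T_i)}$ a finite-length tensor module, your verification of condition (i) is fine, and the extraction of the finite set $F(\mu)$ from Proposition~\ref{prop:ur-computation} for condition (ii) is correct.
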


\section{Category $\CAT{}{}$: first definitions}
\label{s:ola}
As in the previous section we fix $n \in \ZZ_{>0}$ and denote by $\lie g$ the
Lie algebra $\lie g(\VV^n)$. We will also continue to omit $\VV^n$ from the 
notation for subalgebras of $\lie g$. 

\subsection{Introduction to $\CAT{}{}$}
We now introduce a category of representations of $\lie g$ that serves as an 
analogue of category $\mathcal O$. The definition is analogous to the usual 
definition of category $\mathcal O$ for the reductive Lie algebra $\lie{gl}
(r,\CC)$ but, since $U(\lie{gl}(\infty))$ is not noetherian, we need to replace
finite generation with the LAC with respect to $\lie l$.

\begin{defn}
The category $\CAT{\lie l}{\lie g}$ is the full subcategory of $\Mod{\lie g}$ 
whose objects are the $\lie g$-modules $M$ satisfying the following conditions.
\begin{enumerate}[(i)]
\item $M$ is $\lie h$-semisimple.
\item $M$ is $\lie n$-torsion.
\item $M$ satisfies the LAC with respect to $\lie l$.
\end{enumerate}
\end{defn}

We often write $\CAT{}{}$ for $\CAT{\lie l}{\lie g}$. If $M$ is an object
of $\CAT{}{}$ then so is any subquotient of $M$. It follows from the definition 
that the support of an object $M$ in $\CAT{}{}$ must be contained in $\lie 
h^\circ$. The following lemma shows that finitely generated objects in 
$\CAT{}{}$ have LCS and hence well-defined Jordan-Holder multiplicities. Recall 
from Example \ref{ex:gradings} that there is a map $\psi: \lie h^\circ \to \CC$ 
that sends the finite roots to $0$, induces a $\ZZ$-grading on $\lie g$, and 
turns a weight module $M$ into a graded module $M^\psi$.

\begin{lem}
\label{lem:ola-generator}
Let $M$ be any $\lie g$-module and let $v \in M$ be an $\lie h$-semisimple and 
$\lie n$-torsion vector satisfying the LAC with respect to $\lie l$. Then the 
submodule $N = U(\lie g)v$ lies in $\CAT{}{}$, has LCS, and $N^\psi_z = 0$ for 
$z \gg 0$.
\end{lem}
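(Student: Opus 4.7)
The plan is to first establish $N^\psi_z = 0$ for $z \gg 0$ and the stronger structural fact $N \in \tilde\TT_{\lie l[r]}$ as $\lie l[r]^+$-module, to deduce LCS from Proposition \ref{prop:lcs}, and finally to verify the three conditions defining $\CAT{}{}$.

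For the $\psi$-boundedness I would use PBW to factor $U(\lie g) = U(\overline{\lie m}) \cdot U(\lie s) \cdot U(\lie m)$ along the grading $\lie g = \overline{\lie m} \oplus \lie s \oplus \lie m$ from Example \ref{ex:gradings}; since $U(\overline{\lie m})$ and $U(\lie s)$ have non-positive $\psi$-degree, it suffices to bound the $\psi$-degree of $U(\lie m)v$. Here the key input is a polarization identity: $v$ being uniformly $\lie m$-torsion with bound $s$ forces $(t_1 g_1 + \cdots + t_s g_s)^s v = 0$ as a polynomial in the $t_i$, and extracting the coefficient of $t_1 \cdots t_s$ together with PBW reordering yields $F_s U(\lie m)\cdot v \subset F_{s-1} U(\lie m) \cdot v$; inductively $U(\lie m)v = F_{s-1}U(\lie m)v$, and the claim follows because $\psi$ is bounded on root vectors of $\lie m$.

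For $N \in \tilde\TT_{\lie l[r]}$, choose $r$ so that $[\lie l[r], \lie l[r]]$ annihilates $v$ (possible by the LAC). The $\lie p(r)$-submodule $W := U(\lie p(r))v$ equals $U(\lie u(r))v$ because $U(\lie l[r]^+)v = \CC v$, and the same polarization argument applied to $\lie u(r) \subset \lie n$ realizes $W$ as a quotient of $F_{s-1}U(\lie u(r))$; the PBW associated graded $\bigoplus_{k \le s-1}\SS^k(\lie u(r))$ is a finite-length tensor $\lie l[r]$-module by the remark after Proposition \ref{prop:ur-computation}, so $W \in \TT_{\lie l[r]}$ by closure of this category under extensions and quotients. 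Frobenius reciprocity extends the inclusion $W \hookrightarrow N$ to a surjection $\Ind_{\lie p(r)}^{\lie g}W \twoheadrightarrow N$, and as $\lie l[r]^+$-modules $\Ind_{\lie p(r)}^{\lie g}W \cong U(\overline{\lie u}(r))\otimes W$ by PBW; a direct check that $U(\overline{\lie u}(r)) \in \tilde\TT_{\lie l[r]}$ combined with Lemma \ref{lem:locally-tensor} places both the induced module and its quotient $N$ in $\tilde\TT_{\lie l[r]}$, and Proposition \ref{prop:lcs} then gives the LCS.

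The three $\CAT{}{}$-conditions for $N$ become accessible from this structural information. $\lie h$-semisimplicity is immediate from weight-decomposing $u \in U(\lie g)$ under the adjoint action, and the LAC passes from a single generator to the module it generates as observed in the paper. For uniform $\lie n$-torsion of each $w \in N$, the key facts are that each weight space of $N$ is finite-dimensional (because $N_\mu$ embeds in a weight space of the finite-length tensor $\lie l[r]$-module generated by $N_{\succeq \mu}$) and that the support of $U(\lie n) w$ is bounded: the $\psi$-bound controls infinite positive roots uniformly, while a Nampaisarn-style argument places the $\lie s$-submodule generated by $v$ in $\overline{\mathcal O}_{\lie s}$ and thereby controls the finite-positive-root direction. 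Consequently $U(\lie n)w$ sits inside a finite-dimensional subspace on which every $g \in \lie n$ acts as a nilpotent operator, and Jordan form delivers a uniform nilpotency index.

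The main obstacle is the upgrade from Proposition \ref{prop:ur-computation}, which gives $\SS^\bullet(\overline{\lie u}(r)) \in \tilde\TT_{\lie l[r]}$, to the analogous statement for $U(\overline{\lie u}(r))$: the two share an associated graded but $\tilde\TT_{\lie l[r]}$ has not been shown to be closed under extensions, so the defining conditions must be checked directly. This reduces to observing that the unique extremal weight of $U(\overline{\lie u}(r))$ is $0$, because every other weight of $\overline{\lie u}(r)$ is strictly negative, and that for each $\mu$ the subspace $U(\overline{\lie u}(r))_{\succeq \mu}$ sits in $F_dU(\overline{\lie u}(r))$ for some $d$ bounded by the height of $-\mu$, whose $\lie l[r]$-span is a finite-length tensor module by the same PBW-filtration argument used in Proposition \ref{prop:ur-computation}.
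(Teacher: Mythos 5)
Most of your outline is sound and runs parallel to the paper's argument: the paper also reduces everything to the surjection $\SS^\bullet(\overline{\lie u}(r)) \ot \SS^{\leq k}(\lie u(r)) \ot \CC_\lambda \to N$ of $\lie l[r]^+$-modules coming from PBW, and your polarization identity is a legitimate proof of the assertion $\SS^k(\lie u(r))v = 0$ for $k \gg 0$ that the paper merely states. But your argument for uniform $\lie n$-torsion of $N$ fails at a specific point: you claim that each weight space of $N$ is finite dimensional, and this is false. Take $n=1$, $\lambda$ an $r$-eligible weight and $N = M_r(\lambda) \cong U(\overline{\lie u}(r)) \ot \CC_\lambda$. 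The infinite root $\alpha = \epsilon_1 - \epsilon_{-1}$ decomposes as $(\epsilon_1 - \epsilon_j) + (\epsilon_j - \epsilon_{-1})$ for every $j$ in the middle range $r+1 \preceq j \preceq -r-1$, and both summands are roots of $\lie u(r)$; hence the monomials $E_{-1,j}E_{j,1} \ot 1_\lambda$ give infinitely many independent vectors of weight $\lambda - \alpha$. Your parenthetical justification does not save this: tensor $\lie l[r]$-modules of finite length routinely have infinite-dimensional weight spaces (already $\VV[r]\ot\VV_*[r]$ does). Consequently $U(\lie n)w$ need not sit inside a finite-dimensional subspace, and the Jordan-form step collapses. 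The conclusion is still reachable from the part of your argument that is correct: since the support of $U(\lie n)w$ is contained in the finite set of weights of $N$ lying between the weight $\mu$ of $w$ and the finitely many extremal weights, and since a product of $s$ positive root vectors raises the height by at least $s$, one gets $g^sw=0$ for all $g \in \lie n$ once $s$ exceeds the maximal height of $\lambda_i - \mu$. This replaces (and is arguably simpler than) the paper's induction on the $\overline{\lie u}(r)$-degree, but it is not the argument you wrote.

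A secondary, repairable issue: you place $N$ itself in $\tilde\TT_{\lie l[r]}$ as a quotient of $U(\overline{\lie u}(r)) \ot W$, but Lemma \ref{lem:locally-tensor} does not assert closure of $\tilde\TT_{\lie l[r]}$ under quotients, and condition (i) of Definition \ref{defn:t-tilde} (finitely many extremal weights) is not obviously inherited by quotients, since antichains in the dominance order are infinite when the root lattice has infinite rank. The paper avoids this by only transporting the two properties it actually needs — LCS and right-boundedness of the $\psi$-grading — both of which do pass to quotients. Your argument should do the same: apply Proposition \ref{prop:lcs} to the covering module and then quote quotient-stability of LCS.
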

\begin{proof}
Take $r \gg 0$ such that $\CC v$ is a trivial $\lie l[r]$-module. Using the PBW 
theorem we have a surjective map of $\lie l[r]$-modules
\begin{align*}
p:\SS^\bullet(\overline{\lie u}(r)) \ot \SS^\bullet(\lie u(r)) \ot 
	\SS^\bullet(\lie l[r]^+) \ot \CC_\lambda &\to N \\
	u \ot u' \ot l \ot 1_\lambda &\mapsto uu'lv.
\end{align*}
Since $v$ is $\lie n$-nilpotent and satisfies the LAC we have $\SS^\bullet(\lie 
l[r]^+)v = \CC v$ and $\SS^k(\lie u(r))v = 0$ for $k \gg 0$. Thus the 
restriction
\begin{align*}
p':\SS^\bullet(\overline{\lie u}(r)) \ot \SS^{\leq k}(\lie u(r)) \ot 
	\CC_\lambda &\to N 
\end{align*}
is surjective. By Propositions \ref{prop:lcs} and \ref{prop:ur-computation} the 
domain of this last map lies in $\tilde \TT_{\lie l[r]}$ and 
has LCS. Since $\overline{\lie u(r)}^\psi$ has a right-bounded grading, the 
same holds for the domain of $p'$ and hence for $N$. By the PBW theorem
\begin{align*}
\lie n \cdot \SS^i(\overline{\lie u}(r)) \ot \SS^j(\lie u(r))
	\subset \bigoplus_{i',j',t'}
		\SS^{i'}(\overline{\lie u}(r)) \ot \SS^{j'}(\lie u(r)) 
			\ot \SS^{t'}(\lie l[r]^+)
\end{align*}
where $i' + j' + t' = i + j + 1$ and $i' \leq i$. Induction on $i$ shows that
$p'(\SS^i(\overline{\lie u}(r)) \ot \SS^j(\lie u(r)) \ot \CC_\lambda)$ is 
annihilated by a large enough power of $\lie n$, and so $N$ is $\lie n$-torsion.
\end{proof}

This lemma has a very useful consequence: every module in $\Mod{(\lie g, \lie 
h)}$ has a largest submodule contained in $\CAT{}{}$, namely the submodule 
spanned by its $\lie n$-torsion elements satisfying the LAC with respect to 
$\lie l$. Categorically, this means that the inclusion functor of $\CAT{}{}$ in
$\Mod{(\lie g, \lie h)}$ has a right adjoint $\LAprojector$. We will come back
to this observation later on, when we look at categorical properties of 
$\CAT{}{}$.

\subsection{Simple objects in $\CAT{}{}$}
Let $\lambda \in \lie h^*$. The Verma module $M(\lambda)$ does not belong to 
$\CAT{}{}$ since the highest weight vector does not satisfy the
LAC. However, if $\lambda$ is $r$-eligible there exists a $1$-dimensional 
$\lie l [r]^+$-module $\CC_\lambda$, which we can inflate it to a $\lie 
p(r)$-module setting the action of $\lie u(r)$ to be zero. Set $M_r(\lambda) = 
\Ind_{\lie p(r)}^{\lie g} \CC_\lambda$, which is clearly a highest weight 
module of weight $\lambda$. 

Since the highest weight vector of $M_r(\lambda)$ satisfies the LAC, Lemma 
\ref{lem:ola-generator} implies that $M_r(\lambda)$ lies in $\CAT{}{}$. Notice 
also that we have surjective maps $M_{r+1}(\lambda) \to M_r(\lambda)$, that 
given a weight $\mu$ in the support the restriction $M_{r+1}(\lambda)_\mu \to 
M_r(\lambda)_\mu$ is an isomorphism for $r\gg 0$, and that $M(\lambda)$ is the 
inverse limit of this system. This shows in particular that $\CAT{}{}$ is not 
closed under inverse limits. With these parabolic Verma modules in hand, we are 
ready to prove our first result.

\begin{thm}
\label{thm:simples}
The simple objects in $\CAT{}{}$ are precisely the highest weight simple modules
whose highest weight is in $\lie h^\circ$.
\end{thm}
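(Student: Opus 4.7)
The plan is to establish the two inclusions separately, each of which should reduce to machinery already set up. For the ``if'' direction, given $\lambda \in \lie h^\circ$ I would pick $r$ so that $\lambda$ is $r$-eligible and appeal to the parabolic Verma module $M_r(\lambda) = \Ind_{\lie p(r)}^{\lie g} \CC_\lambda$ defined just above the statement. Its cyclic generator is $\lie h$-semisimple, $\lie n$-torsion, and satisfies the LAC with respect to $\lie l$, so Lemma~\ref{lem:ola-generator} places $M_r(\lambda)$ inside $\CAT{}{}$. Since $M_r(\lambda)$ is a highest weight module of weight $\lambda$, its unique simple quotient is $L(\lambda)$, and $\CAT{}{}$ is closed under subquotients, hence $L(\lambda) \in \CAT{}{}$.

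For the converse, suppose $L$ is a simple object of $\CAT{}{}$ and pick any nonzero weight vector $v \in L_\lambda$. The LAC forces $\lambda \in \lie h^\circ$, and simplicity gives $L = U(\lie g)v$, so Lemma~\ref{lem:ola-generator} applies and endows $L$ with a local composition series. I would then specialise to an LCS at $\lambda$: every term in the filtration is a submodule of $L$, hence equal to $0$ or to $L$, so the filtration collapses to $L \supset 0$. The sole layer is $L$ itself, and because $L_\lambda \neq 0$, condition (ii) in the definition of LCS at $\lambda$ forbids excluding this layer from the index set $J$; condition (i) then yields $L \cong L(\mu)$ for some $\mu \succeq \lambda$. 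Since $\mu$ lies in the support of $L$, it belongs to $\lie h^\circ$, completing the argument.

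I do not expect either direction to cause real difficulty; the substance is already absorbed into Lemma~\ref{lem:ola-generator} together with the construction of $M_r(\lambda)$. The only moment that requires a little care is the deduction in the second paragraph, where one must combine the finiteness of the LCS filtration with the support clause (ii) to rule out the possibility that the unique factor of a simple $L$ fails to be a highest weight module.
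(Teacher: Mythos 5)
Your proposal is correct, and the ``if'' direction coincides exactly with the paper's: $M_r(\lambda)$ lies in $\CAT{}{}$ by Lemma~\ref{lem:ola-generator}, hence so does its simple quotient $L(\lambda)$. For the converse the paper argues more directly: since a simple object $L$ of $\CAT{}{}$ is $\lie n$-torsion, a nonzero weight vector $v$ satisfies $\lie n^k v=0$ for some minimal $k$, so $\lie n^{k-1}v$ contains a nonzero singular weight vector, which by simplicity generates $L$ as a highest weight module; the LAC then places the highest weight in $\lie h^\circ$. Your detour through Lemma~\ref{lem:ola-generator} and the collapse of a local composition series at $\lambda$ is valid --- each $F_jL$ is $0$ or $L$, the unique nonzero layer cannot be excluded from $J$ because $L_\lambda\neq 0$, and condition (i) forces $L\cong L(\mu)$ with $\mu$ in the support, hence eligible --- but it is heavier machinery for the same conclusion: the proof of Proposition~\ref{prop:lcs} itself builds the LCS by repeatedly extracting highest weight vectors, so your route ultimately rests on the same mechanism. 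The direct argument also makes more visible why the full $\lie n$-torsion hypothesis (rather than mere local $\lie n$-nilpotence) is what guarantees a highest weight vector, a point the remark following the theorem dwells on.
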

\begin{proof}
Suppose $L$ is a simple object in $\CAT{}{}$. Since $L$ is $\lie n$-torsion it 
has a highest weight vector $v$ of weight $\lambda \in \lie h^\circ$, so 
$L \cong L(\lambda)$. On the other hand, if $\lambda$ is an $r$-eligible 
weight then $M_r(\lambda)$ lies in $\CAT{}{}$ and so does its unique simple 
quotient $L(\lambda)$.
\end{proof}

\begin{rmk}
A more subtle difference between the definition of $\CAT{}{}$ and that of 
category $\mathcal O$ for finite dimensional reductive Lie algebras is that we 
ask for $M$ to be $\lie n$-torsion and not just locally $\lie n$-nilpotent. 
In the finite-dimensional case, and even in the case $n = 1$, these two 
conditions are equivalent (see \cite{PS19}*{Proposition 4.2} for 
$\lie{sl}(\infty)$, the proof is the same of $\gl(\infty)$). 

This is no longer true as soon as $n \geq 2$. Indeed, when $n=2$ the simple 
\emph{lowest} weight module with lowest weight $\lambda = - \omega^{(1)} + 
\omega^{(2)}$ is the limit of the simple finite dimensional lowest weight 
modules $\tilde L(\lambda|_k)$, where each embeds in the next by sending the 
lowest weight vector to the lowest weight vector. Thus $\tilde L(\lambda) = 
\varinjlim \tilde L(\lambda|_k)$ is generated by a weight vector satisfying the 
LAC, and the construction shows that $\tilde L(\lambda)$ is locally $\lie 
n$-nilpotent, but not $\lie n$-torsion. Notice that this module can not be a
highest weight module: indeed, if it had a highest weight vector $v$ then $v$ 
would belong to $\tilde L(\lambda|_k)$ for all $k \gg 0$ and be a highest 
weight vector, but the highest weight vectors of $\tilde L(\lambda|_k)$ are 
never sent to highest weight vectors of $\tilde L(\lambda|_{k+1})$.
\end{rmk}

\subsection{Highest weight modules in $\CAT{}{}$}
As a consequence of Lemma \ref{lem:ola-generator} a highest weight module $M$ 
in $\CAT{}{}$ has finite Jordan-Holder multiplicities, and furthermore if 
the highest weight vector generates a $1$-dimensional $\lie l[r]$-module then 
$M$ lies in $\tilde \TT_{\lie l[r]}$. We will now show that highest 
weight modules have finite length. 

Recall that we denote by $\lie s$ and $\lie m$ the subalgebras of $\lie g$ 
spanned by root spaces corresponding to finite and positive infinite roots, 
respectively, and that $\lie s = \lie g^\psi_0$ and $\lie m = \lie g^\psi_{>0}$.
If $M$ is an object of $\CAT{}{}$ we can see it as a $\ZZ$-graded module through
the map $\psi$, and each homogeneous component is a $\lie s$-module. If the 
grading on $M$ is right-bounded, for example if $M$ is finitely generated, we 
will denote by $M^+$ the top nonzero homogeneous component.

We have also set $\lie q = \lie s \oplus \lie m$. An $\lie s$-module can 
be inflated into a $\lie q$-module by imposing a trivial $\lie m$-action. The 
following result shows that the problem of computing Jordan-Holder 
multiplicities of a highest weight module in $\CAT{}{}$ reduces to computing 
Jordan-Holder multiplicities of highest weight $\lie s$-modules in $\overline 
{\mathcal O}_{\lie s}$. This is very useful since weight components of finite 
length $\lie s$-modules are finite dimensional, and hence characters can be 
given in terms of these dimensions. 
\begin{prop}
\label{prop:hwm-properties}
Let $M \in \CAT{}{}$ be a highest weight module of highest weight $\lambda$. 
\begin{enumerate}[(i)]
\item If $N \subset M$ is a nontrivial module then $N^+ = N \cap M^+$.

\item $M$ is simple if and only if $M^+$ is a simple $\lie s$-module.

\item $M = \Ind^{\lie g}_{\lie q} \mathcal I_{\lie s}^{\lie q} M^+$. 

\item If $\mu$ is an eligible weight and $[M:L(\mu)] \neq 0$ then $\lambda
- \mu$ is finite, and furthermore $[M:L(\mu)] = [M^+:L_{\lie s}(\mu)] \leq 
m(\lambda,\mu)$.
\end{enumerate}
\end{prop}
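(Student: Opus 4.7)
The plan is to establish (iii) first, as it provides the structural backbone, and then derive the remaining parts as consequences. The $\psi$-grading $\lie g = \overline{\lie m} \oplus \lie s \oplus \lie m$ makes $M$ into a $\psi$-graded module with top in degree $c = \psi(\lambda)$. Since $\lie m \subset \lie n$ kills $v_\lambda$ and $\lie s$ preserves the grading, PBW applied to $U(\lie g) = U(\overline{\lie m}) U(\lie s) U(\lie m)$ gives $M = U(\overline{\lie m}) U(\lie s) v_\lambda$, so $M^+ = U(\lie s) v_\lambda$ and $M = U(\overline{\lie m}) M^+$. The canonical map
\begin{align*}
\Ind_{\lie q}^{\lie g} \mathcal I_{\lie s}^{\lie q} M^+ \cong U(\overline{\lie m}) \otimes M^+ \twoheadrightarrow M
\end{align*}
is therefore surjective, and (iii) amounts to showing injectivity. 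Writing $M = M(\lambda)/K$ and using PBW to identify $M(\lambda) \cong \Ind_{\lie q}^{\lie g} M_{\lie s}(\lambda)$, injectivity reduces to the \emph{key claim}: for any $\lie s$-module $P$ inflated trivially to $\lie q$, every $\lie g$-submodule $N$ of $\Ind_{\lie q}^{\lie g} P \cong U(\overline{\lie m}) \otimes P$ coincides with $\Ind_{\lie q}^{\lie g}(N \cap (1 \otimes P))$.

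Equivalently, every nonzero $\lie g$-submodule of $\Ind_{\lie q}^{\lie g} P$ meets its top. I would prove this by induction on the $\overline{\lie m}$-PBW length of a nonzero weight vector: since $\lie m \cdot P = 0$, one has $x \cdot (\overline{u} \otimes p) = [x, \overline{u}] \otimes p$ for $x \in \lie m$, and some $x$ must be chosen so that this expression is nonzero with strictly smaller $\overline{\lie m}$-length. Controlling this adjoint action of $\lie m$ on PBW monomials of $U(\overline{\lie m})$ is the main obstacle of the argument.

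Granting (iii), parts (i) and (ii) follow formally. The key claim with $P = M^+$ gives $N \cap M^+ \neq 0$ for any nonzero $\lie g$-submodule $N \subseteq M$; since $N$ is $\psi$-graded, this forces $N^+ = N \cap M^+$, proving (i). For (ii): a proper $\lie s$-submodule $A \subsetneq M^+$ yields a proper $\lie g$-submodule $\Ind_{\lie q}^{\lie g} A \subsetneq M$, so $M$ simple implies $M^+$ simple; conversely, if $M^+$ is simple and $0 \neq N \subseteq M$ then by (i) $N^+$ is a nonzero $\lie s$-submodule of $M^+$, so $N^+ = M^+$ and $N \supseteq U(\overline{\lie m}) M^+ = M$.

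For (iv), the degree-$c$ functor $F(N) = N_c$ is exact on $\psi$-graded $\lie g$-modules. To show every composition factor $L(\mu)$ of $M$ satisfies $\psi(\mu) = c$, equivalently that $\lambda - \mu$ is finite, I would invoke Lemma~\ref{lem:linked-weights}: central character considerations at each finite level force $\lambda|_s$ and $\mu|_s$ to be linked for all $s \gg 0$, so $\mu = \sigma \cdot \lambda$ for some $\sigma \in \mathcal W(\lie s_{r+1})$ and the difference is a sum of positive finite roots. Applying $F$ to a local composition series of $M$ then produces an LCS of $M^+$ whose nonzero layers are $L(\mu_i)^+ \cong L_{\lie s}(\mu_i)$ by (ii), giving $[M:L(\mu)] = [M^+:L_{\lie s}(\mu)]$. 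Finally, since $M^+$ is a highest-weight $\lie s$-module of weight $\lambda$, it is a quotient of $M_{\lie s}(\lambda)$, whence $[M^+:L_{\lie s}(\mu)] \leq m(\lambda, \mu)$.
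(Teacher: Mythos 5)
Your plan inverts the paper's logical order, and this creates a genuine gap at the foundation. The paper proves (i) first and deduces (iii) from it; you propose to prove (iii) first via the ``key claim'' that every $\lie g$-submodule $N$ of $\Ind_{\lie q}^{\lie g}P$ equals $\Ind_{\lie q}^{\lie g}(N\cap(1\ot P))$, and the proof of that claim is exactly what you leave open. The method you sketch --- induction on the $\overline{\lie m}$-PBW length of a vector of $N$, choosing $x\in\lie m$ with $[x,\overline u]\ot p$ nonzero of smaller length --- cannot work. It makes no use of the eligibility of the weights, and a purely formal argument of this shape would equally prove that every nonzero submodule of a Verma module over a finite-dimensional reductive Lie algebra meets its highest weight line (take $\lie s=\lie h$, $\lie m=\lie n$), which is false: singular vectors of positive PBW length annihilated by all of $\lie m$ are precisely what must be excluded, and excluding them is a weight-theoretic, not a PBW-theoretic, matter. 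Nor is the claim plausible for an arbitrary inflated $\lie s$-module $P$, as you state it.

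The mechanism that actually does the work is the one you invoke only in part (iv), and it has to be moved to the front. Since $M$ is $\lie n$-torsion with support in $\mathcal D(\lambda)$, any nonzero submodule $N$ contains a singular vector $w$ of some eligible weight $\mu$; for $r\gg 0$ this $w$ lies in the highest weight $\lie g_r$-module $U(\lie g_r)v_\lambda$ and is singular there, so $\mu|_r$ and $\lambda|_r$ are linked for all large $r$, and Lemma \ref{lem:linked-weights} --- which crucially requires $\lambda,\mu$ eligible --- gives $\mu=\sigma\cdot\lambda$ with $\sigma\in\mathcal W(\lie s_{r+1})$. Hence $\lambda-\mu$ is a sum of positive finite roots, $\psi(\mu)=\psi(\lambda)$, and $w\in N\cap M^+$; this is (i). Item (iii) then follows by applying (i) to the kernel $K$ of the surjection $\Ind_{\lie q}^{\lie g}\mathcal I_{\lie s}^{\lie q}M^+\to M$, which is itself a highest weight module in $\CAT{}{}$ with top $1\ot M^+$: since $K\cap(1\ot M^+)=0$, we get $K=0$. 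Once (i) and (iii) are in place, your derivations of (ii) and (iv) coincide with the paper's and are correct; for (iv) note additionally that $M^+$ is a quotient of $M_{\lie s}(\lambda)$ with $\lambda$ almost dominant, so it has an honest composition series in $\overline{\mathcal O}_{\lie s}$ to which the exact functor $\Ind_{\lie q}^{\lie g}\circ\mathcal I_{\lie s}^{\lie q}$ can be applied.
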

\begin{proof}
Denote by $v$ the highest weight vector of $M$. If $N$ is a nontrivial 
submodule of $M$ then it contains a highest weight vector, say $w$ of weight 
$\mu$. For each $r \geq 0$ denote by $M_r$ the $\lie g_r$-module generated 
by $v$. Then $M_r$ is a highest weight $\lie g_r$-module and for $r$ large 
enough $w \in M_r$, and it is a highest weight vector. Thus $\mu|_r$ and 
$\lambda|_r$ are linked for all large $r$. Lemma \ref{lem:linked-weights} tells
us that $\lambda - \mu$ must then be a finite root, and so $w \in N \cap M^+ 
\neq \emptyset$ and $N^+ = N \cap M^+$. This proves the first item.

If $M$ is simple then $M^+$ is a $U(\lie g)^\psi_0$ simple module. Using the 
PBW theorem we have a decomposition $U(\lie g)^\psi_0 = \bigoplus_{k \in \NN} 
U(\overline{\lie m})^\psi_{-k}U(\lie s) U(\lie m)^\psi_k$, and since $\lie m$ 
acts trivially on $M^+$ it follows that $M^+$ is a simple $U(\lie s)$-module. 
Conversely, if $M^+$ is simple then any nonzero submodule $N \subset M$ must 
have $N^+ = M^+$ by the first item, so it must contain the highest weight 
vector. Thus $N = M$ and $M$ is simple.

Denote by $K$ be the kernel of the natural map $\Ind_{\lie q}^{\lie g} M^+ \to 
M$. Then by construction $K \cap M^+ = 0$, and the first item implies $K = 0$. 
In particular this shows that $L(\mu) = \Ind_{\lie q}^{\lie g} \mathcal 
I_{\lie s}^{\lie q} L_{\lie s}(\mu)$.
Finally, the module $M^+$ is a highest weight $\lie s$-module and its highest 
weight $\lambda$ is almost dominant since this holds for all eligible 
weights. Thus $M_{\lie s}(\lambda)$ has a composition series and if we 
apply the exact functor $\Ind_{\lie q}^{\lie g} \circ \mathcal I_{\lie s}^{\lie 
q}$ to this filtration we get a filtration of $M$. By the previous item the 
layers of this filtration are simple modules, and hence it is again a 
composition series and we can use it to compute
\begin{align*}
[M: L(\mu)] &= [M^+:L_{\lie s}(\mu)].
\end{align*}
Finally, by the universal property of Verma modules $M^+$ is a quotient of 
$M_{\lie s}(\lambda)$, so $[M^+:L_{\lie s}(\mu)] \leq m(\lambda,\mu)$.
\end{proof}

We will now show that parabolic Verma modules have finite length, and compute 
their Jordan-Holder multiplicities in terms of multiplicities of Verma modules 
over $\lie g_k$.
\begin{cor}
\label{cor:parabolic-verma-jh}
Let $\lambda, \mu \in \lie h^\circ$ with $\lambda$ $r$-eligible. If $L(\mu)$
is a simple constituent of $M_r(\lambda)$ then the following hold.
\begin{enumerate}[(i)]
\item $\mu$ lies in the dot orbit of $\lambda$ by $\mathcal W(\lie s_{r+1})$.
\item If $\mu$ is $r$-eligible then $[M_r(\lambda): L(\mu)] = m(\lambda,\mu)$.
\end{enumerate} 
In particular $M_r(\lambda)$ has finite length.
\end{cor}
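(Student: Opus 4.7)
The plan is to derive everything from Proposition \ref{prop:hwm-properties} applied to $M_r(\lambda)$, which is a highest weight module in $\CAT{}{}$. Part (iv) immediately yields $[M_r(\lambda):L(\mu)] = [M_r(\lambda)^+:L_{\lie s}(\mu)] \leq m(\lambda,\mu)$ and shows that $\lambda-\mu$ is a sum of positive finite roots; this already gives the bound in (ii) and reduces the computation of $\lie g$-theoretic multiplicities to $\lie s$-theoretic ones.

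Next I would identify $M_r(\lambda)^+$ concretely. Using the PBW decomposition $\lie g = \overline{\lie u}(r) \oplus \lie l[r]^+ \oplus \lie u(r)$ together with the $\psi$-grading of Example \ref{ex:gradings}(iii), the top $\psi$-layer is $M_r(\lambda)^+ = U(\overline{\lie u}(r) \cap \lie s) v_\lambda$. Since $[\lie l[r],\lie l[r]]$ annihilates $v_\lambda$, the same PBW reasoning applied inside each $\lie s_s$ realises $U(\lie s_s)v_\lambda$ as the classical generalised Verma module of the finite-dimensional reductive Lie algebra $\lie s_s$, induced from $\CC_{\lambda|_s}$ over the parabolic $(\lie l[r] \cap \lie s_s) + (\lie b \cap \lie s_s)$. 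Theorem \ref{thm:dynkin-verma}(b) then converts each multiplicity $[M_r(\lambda)^+:L_{\lie s}(\mu)]$ into one inside such a finite-dimensional parabolic Verma, for $s$ large enough depending on $\mu$.

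For (ii) I would specialise to $s=r$: the crucial observation is that $\lie l[r] \cap \lie s_r = 0$, since $\lie l[r]$ is supported in indices $|i|>r$ while $\lie s_r \subset \lie g_r$ lives in indices $|i|\leq r$. The parabolic Verma then degenerates to the ordinary Verma $M_{\lie s_r}(\lambda|_r)$, and the multiplicity equals $m(\lambda,\mu)$ after invoking Theorem \ref{thm:dynkin-verma}(b) also for $M_{\lie s}(\lambda)$ itself. For (i), each nonzero $[U(\lie s_s)v_\lambda:L_{\lie s_s}(\mu|_s)]$ forces $\mu|_s$ to lie in the $\lie s_s$-dot-orbit of $\lambda|_s$ for all $s$ sufficiently large, so $\lambda|_s$ and $\mu|_s$ are linked for all such $s$; Lemma \ref{lem:linked-weights} then upgrades this to $\mu = \sigma \cdot \lambda$ with $\sigma \in \mathcal W(\lie s_{r+1})$. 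Finite length of $M_r(\lambda)$ is a consequence: each finite-dimensional $U(\lie s_s) v_\lambda$ has only finitely many composition factors, and by (i) the full list of simple constituents of $M_r(\lambda)$ is a finite subset of the $\mathcal W(\lie s_{r+1})$-dot-orbit of $\lambda$.

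The main obstacle is the hypothesis of Lemma \ref{lem:linked-weights}, which requires $\mu$ itself to be $r$-eligible; the highest weight of a composition factor need not be so a priori, for instance $\mu = \lambda - (\epsilon_i^{(k)}-\epsilon_j^{(k)})$ with $|i|\leq r < |j|$ satisfies all the easy constraints but violates $r$-eligibility. To push through, I would combine the support condition that $\lambda-\mu$ is a non-negative $\ZZ$-combination of positive roots in $\lie u(r)\cap\lie s$ with the minimal-coset-representative description of composition factors of the parabolic Verma $U(\lie s_s)v_\lambda$: a careful combinatorial analysis of which $\sigma$ are minimal for $\mathcal W(\lie s_s) / \mathcal W(\lie l[r] \cap \lie s_s)$ and simultaneously satisfy the positivity constraint should force $\sigma$'s support into $|i|\leq r+1$, which in turn makes $\mu$ automatically of the required form so that Lemma \ref{lem:linked-weights} applies.
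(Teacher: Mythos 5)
Your overall strategy overlaps substantially with the paper's: both reduce via Proposition \ref{prop:hwm-properties} to computing $[M_r(\lambda)^+:L_{\lie s}(\mu)]$, both get the bound $\leq m(\lambda,\mu)$ for free, and your deduction of finite length from (i) is the same. However, there are two genuine gaps. The serious one is in (ii). Theorem \ref{thm:dynkin-verma}(b) gives $[M_r(\lambda)^+:L_{\lie s}(\mu)]=[U(\lie s_s)v_\lambda:L_{\lie s_s}(\mu|_s)]$ only for $s\gg0$, and likewise $m(\lambda,\mu)=[M_{\lie s_s}(\lambda|_s):L_{\lie s_s}(\mu|_s)]$ only for $s\gg0$. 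You cannot ``specialise to $s=r$'': at $s=r$ neither identity is available, while for every $s>r$ --- the only regime where the theorem applies --- the module $U(\lie s_s)v_\lambda$ is a proper quotient of $M_{\lie s_s}(\lambda|_s)$, namely a genuine parabolic Verma module with nontrivial Levi $\lie l[r]\cap\lie s_s$, whose simple multiplicities are in general strictly smaller than the ordinary Kazhdan--Lusztig ones. The observation $\lie l[r]\cap\lie s_r=0$ is correct but does not bridge the two regimes. What is missing is exactly the paper's key step: for $r$-eligible $\mu$ the kernel of the surjection $M_{\lie s}(\lambda)\to M_r(\lambda)^+$ contains no weights $\succeq\mu$, so the two modules have equal characters on the cone $\{\nu \mid \nu\succeq\mu\}$, and a downward induction on weights upgrades the inequality $[M_r(\lambda)^+:L_{\lie s}(\mu)]\leq m(\lambda,\mu)$ to an equality.

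The second gap is in (i). You correctly identify the obstacle --- Lemma \ref{lem:linked-weights} requires $\mu$ to be $r$-eligible, which is not known a priori --- but your proposed remedy (``a careful combinatorial analysis \dots should force'') is not carried out, and it is not clear it would be short. The paper's resolution is softer and avoids coset combinatorics entirely: $L(\mu)$ is a subquotient of $M_r(\lambda)\in\tilde\TT_{\lie l[r]}$, hence itself lies in $\tilde\TT_{\lie l[r]}$, which forces $(\mu,\alpha)\in\ZZ_{\geq0}$ for every positive root $\alpha$ of $\lie l[r]$; this dominance is precisely what the argument of Lemma \ref{lem:linked-weights} needs in order to confine the linking element $\sigma$ to $\mathcal W(\lie s_{r+1})$. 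If you supply that observation for (i) and replace the $s=r$ specialisation in (ii) with the character comparison on the cone $\succeq\mu$, your outline becomes the paper's proof.
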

\begin{proof}
We have already seen in Proposition \ref{prop:hwm-properties} that $\mu$ and
$\lambda$ must be linked. Since $M_r(\lambda)$ is an object of $\tilde 
\TT_{\lie l[r]}$ so is $L(\mu)$, and in particular $\mu|_r$ must be the highest 
weight of a highest weight module in $\TT_{\lie l[r]}$. Thus $(\lambda, 
\alpha)$ and $(\mu, \alpha)$ are positive for any finite root $\alpha$ of 
$\lie l[r]$, and Lemma \ref{lem:linked-weights} implies that 
$\mu = \sigma \cdot \lambda$ for $\sigma \in W(\lie s_{r+1})$. In particular 
$\mu$ belongs to a finite set, so $M_r(\lambda)$ has finite length.

We also know from Proposition \ref{prop:hwm-properties} that $[M_r(\lambda):
L(\mu)] = [M_r(\lambda)^+, L_{\lie s}(\mu)]$. The surjective map of $\lie 
s$-modules $M_{\lie s}(\lambda) \to M_r(\lambda)^+$ restricts to a bijection of
the weight components $\succeq \mu$, and so
\begin{align*}
\dim M_r(\lambda)^+_\mu 
    &=\sum_{\nu \succeq \mu} [M_r(\lambda)^+:L_{\lie s}(\nu)] 
		\dim L_{\lie s}(\nu)_\mu \\
	&= \dim M_{\lie s}(\lambda)_{\mu}
	= \sum_{\nu \succeq \mu} m(\lambda,\nu) \dim L_{\lie s}(\nu)_\mu
\end{align*}
Thus $[M_r(\lambda)^+:L_{\lie s}(\mu)] = m(\lambda,\mu)$.
\end{proof}

\subsection{The structure of a general object in $\CAT{}{}$}
We now turn to more general objects of $\CAT{}{}$. First we show that finitely 
generated objects in $\CAT{}{}$ must have finite length. 
\begin{prop}
\label{prop:fg-fl}
Let $M$ be an object of $\CAT{}{}$. The following are equivalent.
\begin{enumerate}[(a)]
 	\item $M$ is finitely generated.
 	\item $M$ has a finite filtration whose layers are highest weight modules.
 	\item $M$ has finite length.
\end{enumerate}
\end{prop}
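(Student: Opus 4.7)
The plan is to prove the chain $(c) \Rightarrow (b) \Rightarrow (a) \Rightarrow (c)$. The first two implications are immediate from Theorem~\ref{thm:simples}: for $(c) \Rightarrow (b)$, a composition series of $M$ is a finite filtration whose layers are simples, and every simple in $\CAT{}{}$ is a highest weight module; for $(b) \Rightarrow (a)$, each highest weight layer is cyclic, so picking one lift per layer gives a finite generating set of $M$.

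For $(a) \Rightarrow (c)$, I first reduce to the cyclic case: given generators $v_1, \ldots, v_n$, the filtration $U(\lie g)\langle v_1, \ldots, v_j\rangle$ for $j = 0, \ldots, n$ has cyclic layers, so it suffices to show a cyclic object has finite length. Decomposing the generator into its $\lie h$-weight components (each still satisfying the LAC and being $\lie n$-torsion), I further reduce to $M = U(\lie g)v$ with $v \in M_\lambda$ satisfying the LAC. By Lemma~\ref{lem:ola-generator}, $M$ has LCS, lies in $\tilde \TT_{\lie l[r]}$ for some $r$, and is right-bounded in the $\psi$-grading; in particular its extremal weights $\mu_1, \ldots, \mu_s$ are finite in number, the support of $M$ is contained in $\bigcup_p \mathcal D(\mu_p)$, and every weight component $M_\nu$ is finite-dimensional (since $U(\lie l[r])M_{\succeq \nu}$ is a finite-length tensor $\lie l[r]$-module).

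To obtain finite length, I aim to show that the set $S = \{\mu \in \lie h^\circ : [M:L(\mu)] > 0\}$ of highest weights of simple subquotients is finite; given this, the existence of LCS at each $\mu \in S$ bounds every multiplicity $[M:L(\mu)]$ finitely, and the total length is the finite sum $\sum_{\mu \in S}[M:L(\mu)]$.

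To bound $S$, the plan is to prove that every $\mu \in S$ is linked to some extremal $\mu_p$ via the dot action of the finite Weyl group $\mathcal W(\lie s_{r'+1})$, for $r'$ chosen so that $\mu_p$ is $r'$-eligible; combined with the finiteness of the extremal weights and of each such Weyl group, this bounds $|S|$. The linking argument will proceed by viewing $M$ as a $\lie g_k$-module for $k \gg 0$: each $M_{\mu_p}$ is a finite-dimensional and $\lie n$-annihilated subspace, so $U(\lie g_k)M_{\mu_p}$ is a finite direct sum of $\lie g_k$-highest weight modules of highest weight $\mu_p|_k$ lying in $\mathcal O_{\lie g_k}$, and its $\lie g_k$-composition factors have highest weights in the finite $\mathcal W(\lie g_k)$-dot-orbit of $\mu_p|_k$. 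A $\lie g$-composition factor $L(\mu)$ of $M$ yields by restriction a $\lie g_k$-composition factor $L_{\lie g_k}(\mu|_k)$, and the key technical point is that the $\tilde \TT_{\lie l[r]}$-structure forces this factor to appear inside $U(\lie g_k)M_{\mu_p}$ for some extremal $\mu_p$; classical linkage in $\mathcal O_{\lie g_k}$ then places $\mu|_k$ in the $\mathcal W(\lie g_k)$-dot-orbit of $\mu_p|_k$ for all large $k$, and Lemma~\ref{lem:linked-weights} promotes this to the desired dot-linking in $\mathcal W(\lie s_{r'+1})$. The main obstacle will be precisely this ``$L_{\lie g_k}(\mu|_k)$ appears inside $U(\lie g_k)M_{\mu_p}$'' step, which will require careful bookkeeping with the $\tilde \TT_{\lie l[r]}$-structure to trace $\lie g_k$-composition factors of $M$ back to its extremal weight spaces.
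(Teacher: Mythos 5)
Your cycle $(c)\Rightarrow(b)\Rightarrow(a)$ is fine (the paper closes the loop the other way, proving $(a)\Rightarrow(b)\Rightarrow(c)$ with $(c)\Rightarrow(a)$ trivial, but either orientation is acceptable). The problem is your direct attack on $(a)\Rightarrow(c)$: the step you yourself flag as the ``main obstacle'' is not merely unproven, it is false as formulated. You want every $\mu$ with $[M:L(\mu)]>0$ to lie in the dot-orbit of an \emph{extremal} weight of $M$ under a finite Weyl group. Already in the finite-dimensional analogue this fails for weight-cyclic modules: for $\sl(2,\CC)$ take $M = M(0)^\vee \oplus L(-4)$, which is generated by the sum of the two weight-$(-4)$ vectors (the two summands lie in different blocks and each is generated by its weight-$(-4)$ space, so the cyclic submodule they generate is the whole direct sum). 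Its unique extremal weight is $0$, yet $L(-4)$ is a constituent and $-4$ does not lie in the dot-orbit $\{0,-2\}$ of $0$. Correspondingly, $U(\lie g_k)M_{\mu_p}$ sits entirely inside the first summand and never sees $L(-4)$, so the claim ``$L_{\lie g_k}(\mu|_k)$ appears inside $U(\lie g_k)M_{\mu_p}$ for some extremal $\mu_p$'' cannot be established. The true statement is that constituents are linked to the highest weights of the layers of a highest-weight filtration of $M$, and those layer weights need not be extremal in $M$. (A smaller slip: weight spaces of objects of $\CAT{}{}$ are typically infinite-dimensional --- already $U(\overline{\lie n})_\nu$ is infinite-dimensional when $\nu$ is a negative infinite root --- but this is not load-bearing, since LCS yields finite multiplicities by definition.)

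The repair is to prove $(a)\Rightarrow(b)$ first, which is how the paper proceeds: for $M = U(\lie g)v$ with $v\in M_\lambda$ annihilated by $\lie l[r]'$, Lemma \ref{lem:ola-generator} places $M$ in $\tilde\TT_{\lie l[r]}$, so the $\lie l[r]$-module generated by $M_{\succeq\lambda}$ has finite length; one inducts on that length, at each step splitting off the highest weight submodule $U(\lie g)w$ generated by a $\lie g$-highest weight vector $w$ found in $M_{\succeq\lambda}$. Then $(b)\Rightarrow(c)$ follows because every highest weight module in $\CAT{}{}$ is a quotient of a parabolic Verma module $M_r(\nu)$, which has finite length by Corollary \ref{cor:parabolic-verma-jh}. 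Note that your route never invokes Corollary \ref{cor:parabolic-verma-jh}; any direct proof of $(a)\Rightarrow(c)$ would in effect have to reprove its content, which is another sign the shortcut is not available.
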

\begin{proof}
To prove $(a) \Rightarrow (b)$ suppose first that $M$ is cyclic, and its 
generator is a weight vector $v$ of weight $\lambda$ such that $\lie l[r]' 
v = 0$. As seen in Lemma  \ref{lem:ola-generator} $M$ is in $\tilde \TT_{\lie 
l[r]}$, so we can proceed by induction on the length of the $\lie l[r]$-module 
spanned by $M_{\succeq \lambda}$. This submodule 
contains a highest weight vector, say $w$, and we set $N = U(\lie g)w$. Clearly 
$N$ is a highest weight module and $M/N$ has a filtration by highest weight 
modules by hypothesis, so the same holds for $M$. The general case now follows 
by induction on the number of generators of $M$. The implication $(b) 
\Rightarrow (c)$ follows from Corollary \ref{cor:parabolic-verma-jh}, and $(c) 
\Rightarrow (a)$ is obvious.
\end{proof}

We now give a general approach to compute the Jordan-Holder multiplicities of 
an arbitrary object of $\CAT{}{}$. We will use the following tool.

\begin{defn}
\label{defn:psi-filtration}
Let $M \in \CAT{}{}$. For each $i \in \ZZ$ we set $\FF_iM$ to be the submodule 
of $M$ generated by $\bigoplus_{j \geq i} M_j^\psi$. The family $\left\{\FF_i 
M  \mid i \in \ZZ\right\}$ is the \emph{$\psi$-filtration} of $M$. We also 
define $\mathcal S_i M$ to be the $\lie s$-module $(\FF_i M / \FF_{i-1} M)_i$,
which is the top component of the corresponding layer of the filtration.
\end{defn}

By construction the layers of the $\psi$-filtration of any module are  
generated by their top degree component, and hence their multiplicities can be computed using Proposition \ref{prop:hwm-properties}. 

\begin{prop}
\label{prop:s-filtration}
Let $M$ be an object in $\CAT{}{}$ with LCS and let $\lambda$ be an eligible 
weight with $\psi(\lambda) = p$. Then $[M:L(\lambda)] = [\mathcal S_p M: 
L_{\lie s}(\lambda)]$. 
\end{prop}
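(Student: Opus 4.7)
Set $p = \psi(\lambda)$. The plan is to use the $\psi$-filtration to concentrate the multiplicity of $L(\lambda)$ in the single layer $N_p$, and then to identify $[N_p : L(\lambda)]$ with $[\mathcal{S}_p M : L_{\lie s}(\lambda)]$ via the parabolic induction functor $\Ind_{\lie q}^{\lie g} \circ \mathcal{I}_{\lie s}^{\lie q}$.

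First I would analyse each layer $N_i$ of the $\psi$-filtration. Since $\lie m \subset \lie g^\psi_{>0}$ strictly raises the $\psi$-grading, it acts trivially on the top component $\mathcal{S}_i M = (N_i)_i$; hence $\mathcal{S}_i M$ inflates to a $\lie q$-module, and the universal property of induction together with the fact that $\mathcal{S}_i M$ generates $N_i$ as a $\lie g$-module supplies a surjection
\begin{align*}
\pi_i : \Ind_{\lie q}^{\lie g} \mathcal{I}_{\lie s}^{\lie q} \mathcal{S}_i M \twoheadrightarrow N_i.
\end{align*}
The composition $\Ind_{\lie q}^{\lie g} \circ \mathcal{I}_{\lie s}^{\lie q}$ is exact, and Proposition \ref{prop:hwm-properties}(iii) identifies $\Ind_{\lie q}^{\lie g} \mathcal{I}_{\lie s}^{\lie q} L_{\lie s}(\mu)$ with $L(\mu)$. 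Applying this functor to a local composition series of $\mathcal{S}_i M$, obtained by restricting an LCS of $N_i$ to the top $\psi$-component, turns the series into an LCS of $\Ind_{\lie q}^{\lie g} \mathcal{I}_{\lie s}^{\lie q} \mathcal{S}_i M$ whose constituents are $L(\mu)$ with $\psi(\mu) = i$. Passing to the quotient $N_i$ gives $[N_i : L(\mu)] \leq [\mathcal{S}_i M : L_{\lie s}(\mu)]$, and in particular $[N_i : L(\lambda)] = 0$ whenever $i \neq p$.

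Next, additivity of LCS multiplicities across the $\psi$-filtration yields $[M : L(\lambda)] = [N_p : L(\lambda)]$, with only finitely many layers intervening on the $\lambda$-weight space. It remains to upgrade the inequality at $i = p$ to an equality, and for this I would focus on $K = \ker \pi_p$. By PBW the ambient module $\Ind_{\lie q}^{\lie g} \mathcal{I}_{\lie s}^{\lie q} \mathcal{S}_p M$ is isomorphic to $U(\overline{\lie m}) \otimes \mathcal{S}_p M$, and its $\lambda$-weight space is exactly $(\mathcal{S}_p M)_\lambda$: indeed $U(\overline{\lie m})$ carries only non-positive $\psi$-grades, with $\psi$-grade zero spanned by the unit, so matching the $\psi$-grade $p$ of $\lambda$ forces the $U(\overline{\lie m})$-factor to be $1$. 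Since $\pi_p$ restricts to the identity on $\mathcal{S}_p M$, we conclude $K_\lambda = 0$. A subquotient of $K$ isomorphic to $L(\lambda)$ would produce a non-zero $\lambda$-weight space, which is impossible, so $[K : L(\lambda)] = 0$. Additivity applied to $0 \to K \to \Ind_{\lie q}^{\lie g} \mathcal{I}_{\lie s}^{\lie q} \mathcal{S}_p M \to N_p \to 0$ then gives the desired equality.

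The main obstacle I anticipate is the bookkeeping around LCS: one must confirm that $\mathcal{S}_i M$ inherits an LCS as an $\lie s$-module by restricting an LCS of $N_i$ to the top $\psi$-component, and that both $K$ and the induced module enjoy the LCS property so that additivity on the last short exact sequence is legitimate.
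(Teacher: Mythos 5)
Your argument is correct, but it follows a genuinely different route from the paper's. The paper first reduces to finite-length objects via Proposition \ref{prop:fg-fl}, then observes that both $[M:L(\lambda)]$ and $[\mathcal S_pM:L_{\lie s}(\lambda)]$ are additive functions of $M$ (the latter justified only as ``a simple induction''), and finally checks equality on simple objects using Proposition \ref{prop:hwm-properties}. You instead work directly with the $\psi$-filtration of an arbitrary $M$ with LCS: the surjection $\Ind_{\lie q}^{\lie g}\mathcal I_{\lie s}^{\lie q}\mathcal S_iM\twoheadrightarrow N_i$ shows each layer carries only constituents $L(\mu)$ with $\psi(\mu)=i$, so the multiplicity of $L(\lambda)$ concentrates in the single layer $N_p$ (the quotient $M/\FF_pM$ contributes nothing since its $\lambda$-weight space vanishes), and the PBW computation $K_\lambda=0$ for the kernel of $\pi_p$ upgrades the inequality to an equality. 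What your approach buys is that the additivity of $[\mathcal S_pM:L_{\lie s}(\lambda)]$ — the step the paper leaves implicit — is replaced by explicit control of the kernel, and the reduction to finite length becomes unnecessary. The bookkeeping you flag does work out: restricting an LCS of $N_i$ at $\lambda$ to the top $\psi$-component yields an LCS of $\mathcal S_iM$ over $\lie s$, because the degree-$i$ component of $L(\mu)$ is $L_{\lie s}(\mu)$ or $0$ according to whether $\psi(\mu)=i$ or $\psi(\mu)<i$; the induced module then acquires an LCS by exactness of $\Ind_{\lie q}^{\lie g}\circ\mathcal I_{\lie s}^{\lie q}$, and $K$ inherits one as a submodule, so every additivity step you invoke is legitimate.
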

\begin{proof}
It is enough to show the result for finitely generated objects, and by 
Proposition \ref{prop:fg-fl} it is enough to show it for finite length objects. 
Let $M$ be a finite length object, and take $q \in \ZZ$ such that $M^+ = 
M^\psi_q$. A simple induction on $p-q$ shows that $[\mathcal S_p M: L_{\lie s}
(\lambda)]$ is an additive function on $M$, with the base case a consequence of 
Proposition \ref{prop:hwm-properties}. Since both $[M:L(\lambda)]$ and 
$[\mathcal S_p M: L_{\lie s}(\lambda)]$ are additive functions on $M$ it is 
enough to show they are equal when $M$ is simple, which again follows from 
Proposition \ref{prop:hwm-properties}. 
\end{proof}

\subsection{Categorical properties of $\CAT{}{}$}
We now focus on the general categorial properties of $\CAT{}{}$.
Let $M$ be a $\lie h$-semisimple $\lie g$-module. By Lemma 
\ref{lem:ola-generator} the submodule spanned by all its $\lie n$-torsion 
vectors satisfying the large annihilator condition is an object of $\CAT{}{}$,
and is in fact the largest submodule of $M$ lying in $\CAT{}{}$. We thus
have a diagram of functors, where each arrow from left to right is an embedding
of categories and each right to left arrow is a left adjoint
\begin{align*}
\xymatrix{
	\CAT{\lie l}{\lie g} \ar@<-1ex>[r]
	& \Mod{(\lie g, \lie h)}_{\mathsf{LA}}^{\lie l} 
		\ar@<-1ex>[r] \ar@<-1ex>[l]_-{\Gamma_{\lie n}}
	& \Mod{(\lie g, \lie h)}
		\ar@<-1ex>[r] \ar@<-1ex>[l]_-{\Phi}
	&\Mod{\lie g}
		\ar@<-1ex>[l]_-{\Gamma_{\lie h}}
}
\end{align*}
It follows that we have a functor $\LAprojector = \Gamma_{\lie n} \circ \Phi
=\Phi \circ \Gamma_{\lie n}: \Mod{(\lie g, \lie h)} \to \CAT{}{}$, which is  
right adjoint to the exact inclusion functor $\CAT{}{} \to \Mod{(\lie g, \lie 
h)}$. In particular it preserves direct limits and sends injectives to 
injectives.

Recall that an abelian category $\mathcal A$ is locally artinian if every 
object is the limit of its finite length objects. Also, $\mathcal A$ has the 
Grothendieck property if it has direct limits, and for every object $M$, every 
subobject $N \subset M$ and every directed family of subobjects $(A_\alpha)_{
\alpha \in I}$ of $M$ it holds that $N \cap \varinjlim A_\alpha = \varinjlim N 
\cap A_\alpha$. The following result is an easy consequence of the properties 
of $\LAprojector$.
\begin{thm}
Category $\CAT{}{}$ is a locally artinian category with direct limits, enough 
injective objects, and the Grothendieck property.
\end{thm}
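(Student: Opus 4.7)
The plan is to verify each of the four properties separately, using as the main tool the adjunction between the inclusion $\iota: \CAT{}{} \hookrightarrow \Mod{(\lie g, \lie h)}$ and $\LAprojector$, together with Proposition \ref{prop:fg-fl} and the analogous properties of $\Mod{(\lie g, \lie h)}$, which itself inherits them from $\Mod{\lie g}$.

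First I would show that $\CAT{}{}$ is closed under direct limits taken in $\Mod{(\lie g, \lie h)}$. Direct limits preserve $\lie h$-semisimplicity. For the $\lie n$-torsion and large annihilator conditions, the point is that any element of $\varinjlim M_\alpha$ is represented by some element of a single $M_\alpha$, and both conditions are checked vector by vector. Hence the direct limit computed in $\Mod{(\lie g, \lie h)}$ already lies in $\CAT{}{}$, and this is then automatically the direct limit in $\CAT{}{}$ since $\iota$ is left adjoint to $\LAprojector$ and therefore preserves colimits.

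Next I would treat local artinianness. Any $M \in \CAT{}{}$ is the directed union of its finitely generated submodules, which all lie in $\CAT{}{}$ since the category is closed under subobjects. By Proposition \ref{prop:fg-fl}, each of these submodules has finite length, and by the previous step the union is the direct limit in $\CAT{}{}$. For enough injectives, I would start with an injective embedding $M \hookrightarrow I$ in $\Mod{(\lie g, \lie h)}$, which exists because that category has enough injectives. Applying $\LAprojector$, which is left exact as a right adjoint, yields a monomorphism $M = \LAprojector(M) \hookrightarrow \LAprojector(I)$; and $\LAprojector(I)$ is injective in $\CAT{}{}$ because $\LAprojector$ is right adjoint to the \emph{exact} inclusion $\iota$.

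Finally, for the Grothendieck property, given $N \subset M$ and a directed family $(A_\alpha)$ of submodules of $M$ in $\CAT{}{}$, the direct limit $\varinjlim A_\alpha \subseteq M$ and the intersection $N \cap \varinjlim A_\alpha$ are both computed as in $\Mod{(\lie g, \lie h)}$ by the first step, and the identity $N \cap \varinjlim A_\alpha = \varinjlim (N \cap A_\alpha)$ holds in $\Mod{\lie g}$ (hence in $\Mod{(\lie g, \lie h)}$) because this is a standard AB5 category. I expect no serious obstacle: the main subtlety is simply verifying that the three defining conditions of $\CAT{}{}$ are stable under filtered colimits, after which every claim reduces to a known property of the ambient module category together with the formal adjunction between $\iota$ and $\LAprojector$.
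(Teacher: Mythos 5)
Your proof is correct and follows essentially the same route as the paper, which derives the theorem directly from the exactness of the inclusion $\CAT{}{} \hookrightarrow \Mod{(\lie g, \lie h)}$, its right adjoint $\LAprojector$, and Proposition \ref{prop:fg-fl}. The only cosmetic remark is that your appeal to ``$\iota$ preserves colimits'' is redundant once you have shown the colimit computed in $\Mod{(\lie g, \lie h)}$ already lies in the full subcategory $\CAT{}{}$; that observation alone gives direct limits, and the rest (local artinianness, enough injectives via $\LAprojector$ of an ambient injective, and AB5) goes through exactly as you say.
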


\section{Category $\CAT{}{}$: Standard objects}
\label{s:standard}

In this section we will introduce the standard objects of $\CAT{}{}$. We then
compute their simple multiplicities through their $\psi$-filtrations.

\subsection{Large-annihilator dual Verma modules}
We now introduce the modules that will play the role of standard objects on 
$\CAT{}{}$. In the finite dimensional case this role is played by the 
semisimple duals of Verma modules, so it is natural to consider the ``best 
approximation'' to these modules in $\CAT{}{}$.
\begin{defn}
For every $\lambda \in \lie h^\circ$ we set $A(\lambda) = \dual{M(\lambda)}$. 
\end{defn}
The module $A(\lambda)$ can be hard to grasp. For example, it is not clear 
at first sight that it lies in $\CAT{}{}$. As a first approximation, set 
$A_r(\lambda)$ to be the LA dual of the parabolic Verma $M_r(\lambda)$. The 
Verma module $M(\lambda)$ is the inverse limit of the $M_r(\lambda)$, and since 
LA duality sends inverse limits to direct limits, $A(\lambda) = \varinjlim 
A_r(\lambda)$. Also, the natural maps $A_r(\lambda) \to A(\lambda)$ are 
injective, and for any $\mu$ in the support of $A(\lambda)$ and $r \gg 0$ the 
map $A(\lambda)_{\succeq\mu} \to A_r(\lambda)_{\succeq\mu}$; this follows from 
the dual facts for $M_r(\lambda)$ and $M(\lambda)$. This approach 
allows us to prove that the $A(\lambda)$ are indeed in $\CAT{}{}$.

\begin{prop}
\label{prop:a-in-ola}
For every eligible weight $\lambda$ and every $r \geq 0$ the modules 
$A(\lambda), A_r(\lambda)$ lie in $\CAT{}{}$ and have LCS. Furthermore,
we have $[A(\lambda): L(\mu)] = [A_r(\lambda): L(\mu)]$ for $r \gg 0$.
\end{prop}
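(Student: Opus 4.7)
The plan is to prove the statement first for $A_r(\lambda)$ by describing its $\lie l[r]^+$-module structure, and then transfer the result to $A(\lambda)$ via the identification $A(\lambda) = \varinjlim A_r(\lambda)$ together with the already-noted stabilization of weight spaces.

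First I would unravel the $\lie l[r]^+$-module structure of $A_r(\lambda)$. By the PBW theorem $M_r(\lambda) \cong \SS^\bullet(\overline{\lie u}(r)) \otimes \CC_\lambda$ as $\lie l[r]^+$-modules. Since semisimple duality and the functor $\Phi_{\lie l}$ both commute with tensoring by a one-dimensional module (the LAC is vacuous on such a factor), this gives
\begin{align*}
A_r(\lambda)\big|_{\lie l[r]^+}
    \cong \dual{\SS^\bullet(\overline{\lie u}(r))} \otimes \CC_\lambda.
\end{align*}
By Proposition \ref{p:ur-ladual} and Lemma \ref{lem:locally-tensor} the right hand side lies in $\tilde \TT_{\lie l[r]}$, so Proposition \ref{prop:lcs} provides an LCS for $A_r(\lambda)$. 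To conclude that $A_r(\lambda) \in \CAT{}{}$ we only need to verify $\lie n$-torsion, since $\lie h$-semisimplicity and the LAC with respect to $\lie l$ are built into $\Phi_{\lie l}$.

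For $\lie n$-torsion I would use that the support of $A_r(\lambda)$ coincides with that of $M_r(\lambda)$ and is therefore contained in $\{\mu : \mu \preceq \lambda\}$. Let $\mathsf{ht}$ denote the sum of coefficients in the basis of $\Lambda$ displayed in Section \ref{s:faces}; every positive root satisfies $\mathsf{ht}(\alpha) \geq 1$, so any weight vector $u$ in the right ideal $\lie n^N \subset U(\lie g)$ has weight $\gamma$ with $\mathsf{ht}(\gamma) \geq N$. For $v \in A_r(\lambda)_\mu$, the condition $uv \ne 0$ forces $\gamma \preceq \lambda - \mu$ and hence $N \leq \mathsf{ht}(\lambda - \mu)$, so $v$ is annihilated by $\lie n^N$ whenever $N > \mathsf{ht}(\lambda - \mu)$. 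Decomposing an arbitrary vector of $A_r(\lambda)$ into weight components handles the general case.

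Finally, $A(\lambda) = \varinjlim A_r(\lambda)$ lies in $\CAT{}{}$ because the category is closed under direct limits. The stabilization $A_r(\lambda)_{\succeq \mu} = A(\lambda)_{\succeq \mu}$ for $r \gg 0$ recalled in the preamble transports the LCS at $\mu$ from $A_r(\lambda)$ to $A(\lambda)$ with matching multiplicities at every $L(\nu)$ with $\nu \succeq \mu$, yielding both the LCS of $A(\lambda)$ and the equality $[A(\lambda):L(\mu)] = [A_r(\lambda):L(\mu)]$. The most delicate step is the identification $A_r(\lambda)|_{\lie l[r]^+} \cong \dual{\SS^\bullet(\overline{\lie u}(r))} \otimes \CC_\lambda$: one must verify carefully that $\Phi_{\lie l}$ commutes with $-\otimes \CC_\lambda$, and the $\lie n$-torsion argument is mildly subtle in the non-Dynkin setting because the basis of $\Lambda$ one uses to define $\mathsf{ht}$ contains the atomic infinite roots as well as the simple roots.
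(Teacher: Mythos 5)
Your treatment of the LCS and of the passage from $A_r(\lambda)$ to $A(\lambda)$ matches the paper's proof: identify $A_r(\lambda)|_{\lie l[r]^+}$ with $\dual{\SS^\bullet(\overline{\lie u}(r))}\ot\CC_\lambda$, invoke Proposition \ref{p:ur-ladual} and Proposition \ref{prop:lcs}, then use stabilization of the spaces $(-)_{\succeq\mu}$ along the direct limit. The gap is in your $\lie n$-torsion argument: the height function you propose does not have the property you ascribe to it. With respect to the basis of $\Lambda$ consisting of the simple roots together with the atomic infinite roots $\epsilon_1^{(k)}-\epsilon_{-1}^{(k)}$, the \emph{positive} root $\epsilon_i^{(k)}-\epsilon_j^{(k)}$ with $i$ a positive and $j$ a negative integer expands as $(\epsilon_1^{(k)}-\epsilon_{-1}^{(k)})-(\epsilon_1^{(k)}-\epsilon_i^{(k)})-(\epsilon_j^{(k)}-\epsilon_{-1}^{(k)})$, so its coefficient sum is $3-i-|j|$, which is arbitrarily negative (e.g.\ $\mathsf{ht}(\epsilon_5^{(k)}-\epsilon_{-5}^{(k)})=-7$). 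Worse, no additive functional $h$ on $\Lambda$ can satisfy $h(\alpha)\geq 1$ for all positive roots: setting $t_i=h(\epsilon_i^{(k)}-\epsilon_{-1}^{(k)})$, positivity along the chain $1\prec 2\prec\cdots$ forces $t_1\geq t_m+(m-1)$, while $m\prec -1$ forces $t_m\geq 1$, so $t_1\geq m$ for every $m$. This is not a cosmetic issue: it reflects the fact that $\prec$ is not interval-finite here, so a pure support-plus-height bound cannot produce the uniform $N$ with $\lie n^N v=0$.

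The paper circumvents exactly this obstruction by splitting $\lie n=\lie n(r)\niplus\lie u(r)$ with $\lie n(r)=\lie n\cap\lie l[r]$: torsion for $\lie n(r)$ follows from membership in $\tilde\TT_{\lie l[r]}$, while for $\lie u(r)$ one uses the grading induced by the map $\theta$ of Example \ref{ex:gradings}, which vanishes on the roots of $\lie l[r]^+$, is strictly positive on the roots of $\lie u(r)$, and is right-bounded on $A_r(\lambda)$. In effect $\theta$ is the additive functional you were looking for, but it only controls the part of $\lie n$ lying in $\lie u(r)$, and the complementary part must be handled by a different (integrability/finite-length) mechanism. You should replace your height argument with this two-step one; the remainder of your proof then goes through as written.
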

\begin{proof}
By definition the space $\dual{M_r(\lambda)}$ is an $\lie h$-semisimple module
satisfying the LAC. Also it is isomorphic to $\dual{\SS^\bullet(\overline{\lie 
u}(r))} \ot \CC_\lambda$ as $\lie l[r]^+$-module, and hence lies in $\tilde 
\TT_{\lie l[r]}$ by Proposition \ref{p:ur-ladual}, and by Lemma \ref{prop:lcs} 
it has LCS. 
To see that it is $\lie n$-torsion, first observe that it is 
$\lie n(r) = \lie l[r] \cap \lie n$-torsion by virtue of being in $\tilde 
\TT_{\lie l[r]}$. Now recall the map $\theta$ from Example \ref{ex:gradings}. 
It induces a right-bounded grading on $A_r(\lambda)$, and since $\lie 
g^\theta_{>0} = \lie u(r)$, it follows that this subalgebra acts nilpotently on 
$A(\lambda)$and hence $A(\lambda)$ is $\lie n = \lie n(r) \niplus \lie 
u(r)$-torsion. This completes the proof for $A_r(\lambda)$.

Since $\CAT{}{}$ is closed by direct limits it follows that $A(\lambda)$
belongs to $\CAT{}{}$. Given a weight $\mu$ we build an LCS for $A(\lambda)$ 
at $\mu$ as follows: start with an LCS at $\mu$ for $A_r(\lambda)$ such that 
$A_r(\lambda)_\mu \to A(\lambda)_\mu$ is an isomorphism, and use the natural 
map to obtain a filtration of $A(\lambda)$. Adding $A(\lambda)$ at the top of 
the filtration gives us the desired LCS, and shows that the multiplicities 
coincide as desired.
\end{proof}

This result shows that $A(\lambda)$ is the projection of 
$M(\lambda)^\vee$ to $\CAT{}{}$. By Proposition \ref{prop:ss-dual-coind} we get 
that $A(\lambda) \cong \Phi(\ssCoind_{\overline{\lie b}}^{\lie g} 
\CC_\lambda) \cong \LAprojector(\ssCoind_{\overline{\lie b}}^{\lie g} 
\CC_\lambda)$. The following theorem shows that the $A(\lambda)$ have the 
usual properties associated to standard objects in highest weight categories.

\begin{thm}
\label{thm:standard}
For each $\lambda \in \lie h^\circ$ the following hold.
\begin{enumerate}[(i)]
\item
\label{i:pre-a1}
$A(\lambda)$ is indecomposable and $\soc A(\lambda) \cong L(\lambda)$.

\item 
\label{i:pre-a2}
The composition factors of $A(\lambda)/L(\lambda)$ are of the form $L(\mu)$ 
with $\mu \prec \lambda$.
\item 
\label{i:pre-a3}
For each $\mu \in \lie h^\circ$ we have $\dim \Hom_{\CAT{}{}} (A(\mu), 
A(\lambda)) < \infty$.
\end{enumerate}
\end{thm}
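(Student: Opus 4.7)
The plan is to work with the finite-length approximations $A_r(\lambda) = \dual{M_r(\lambda)}$ and then lift the conclusions to $A(\lambda) = \varinjlim A_r(\lambda)$, using Proposition \ref{prop:a-in-ola} as the main bridge. Since $M_r(\lambda)$ is a finite-length highest weight module with head $L(\lambda)$ (Corollary \ref{cor:parabolic-verma-jh}), the semisimple dual $M_r(\lambda)^\vee$ has the same composition multiplicities but with simple socle $L(\lambda)$, all other factors being $L(\nu)$ with $\nu \prec \lambda$. Applying $\Phi$ preserves this structure: from $A_r(\lambda) = \Phi M_r(\lambda)^\vee \subseteq M_r(\lambda)^\vee$ we get $\soc A_r(\lambda) \subseteq L(\lambda)$, while $L(\lambda) \in \CAT{}{}$ forces $L(\lambda) \subseteq A_r(\lambda)$, so the composition factors are of the required form with $[A_r(\lambda): L(\lambda)] = 1$.

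To deduce (i) and (ii), I pass to the limit. Any nonzero $v \in A(\lambda)$ lies in some $A_r(\lambda)$, and $U(\lie g) v$ has finite length in $\CAT{}{}$ by Lemma \ref{lem:ola-generator} and Proposition \ref{prop:fg-fl}; its nonzero socle lies in $\soc A_r(\lambda) = L(\lambda)$, so every nonzero submodule of $A(\lambda)$ contains $L(\lambda)$. This yields $\soc A(\lambda) = L(\lambda)$ and hence indecomposability, while the stabilization of multiplicities in Proposition \ref{prop:a-in-ola} identifies the composition factors of $A(\lambda)$ with those of $A_r(\lambda)$ for $r \gg 0$, giving $[A(\lambda): L(\lambda)] = 1$ and all other factors $L(\nu)$ with $\nu \prec \lambda$.

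For (iii), the key ingredient is the length bound
\begin{align*}
\dim \Hom_{\CAT{}{}}(M, A(\lambda)) \leq [M : L(\lambda)]
\end{align*}
valid for any finite length $M \in \CAT{}{}$ by induction on length, using left exactness of $\Hom(-, A(\lambda))$ and the fact that $\soc A(\lambda) = L(\lambda)$ is simple (the base case $M = L(\nu)$ giving $\delta_{\nu,\lambda}$). Applied to $A_r(\mu)$ together with multiplicity stabilization, this yields $\dim \Hom(A_r(\mu), A(\lambda)) \leq [A(\mu) : L(\lambda)] < \infty$ for $r \gg 0$.

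The main obstacle is then to transfer the bound from $A_r(\mu)$ to $A(\mu)$ itself by showing that the restriction map $\Hom_{\CAT{}{}}(A(\mu), A(\lambda)) \to \Hom_{\CAT{}{}}(A_r(\mu), A(\lambda))$ is injective for $r$ large. If some nonzero $\phi$ vanished on $A_r(\mu)$, it would factor through $A(\mu)/A_r(\mu)$, and its nonzero image would contain $L(\lambda)$ (any nonzero object of $\CAT{}{}$ has nonzero socle by the same finite-length cyclic argument), forcing $L(\lambda)$ to be a composition factor of $A(\mu)/A_r(\mu)$. But additivity of LCS multiplicities and Proposition \ref{prop:a-in-ola} give $[A(\mu)/A_r(\mu) : L(\lambda)] = 0$ for $r$ sufficiently large, producing the required contradiction and completing (iii).
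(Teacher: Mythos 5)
Your strategy of approximating $A(\lambda)$ by the submodules $A_r(\lambda)$ is workable, but the step that actually carries items (i) and (ii) is the one you do not justify. You import from finite-dimensional category $\mathcal O$ the statement that $M_r(\lambda)^\vee$ ``has the same composition multiplicities but with simple socle $L(\lambda)$''. In the present setting $\vee$ is not a duality on the relevant category: the weight spaces $M_r(\lambda)_\mu$ are typically infinite dimensional, so $(M_r(\lambda)^\vee)_\mu = (M_r(\lambda)_\mu)^*$ is uncountable dimensional, $M_r(\lambda)^\vee$ does not lie in $\CAT{}{}$ and does not have finite length with the same factors --- this failure is precisely why the paper works with $\dual{M} = \Phi(M^\vee)$ instead of $M^\vee$. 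In particular $\soc M_r(\lambda)^\vee \cong L(\lambda)$ cannot be quoted; it is essentially the assertion you are trying to prove. What is needed is a computation of the space of $\lie n$-highest-weight functionals on $M_r(\lambda)$, equivalently of $\Hom_{\CAT{}{}}(L(\mu), A(\lambda))$. The paper obtains this in one line from adjunction: since $A(\lambda) \cong \LAprojector(\ssCoind_{\overline{\lie b}}^{\lie g}\CC_\lambda)$ by Proposition \ref{prop:ss-dual-coind}, one has $\Hom_{\CAT{}{}}(L(\mu),A(\lambda)) \cong \Hom_{\overline{\lie b},\lie h}(L(\mu),\CC_\lambda)$, which is $\delta_{\lambda\mu}\CC$. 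Once that is in place, your limit argument (every nonzero cyclic submodule has finite length, hence a simple submodule) correctly upgrades this to $\soc A(\lambda) \cong L(\lambda)$, and (ii) follows from $\dim A(\lambda)_\lambda \le \dim (M(\lambda)_\lambda)^* = 1$ together with the support being contained in $\{\mu \preceq \lambda\}$.

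There is a second, more localized gap in (iii). Your bound $\dim\Hom_{\CAT{}{}}(M,A(\lambda)) \le [M:L(\lambda)]$ is proved by induction on length and therefore only applies to finite-length $M$, yet you apply it to $M = A_r(\mu)$. Neither the paper nor your argument establishes that $A_r(\mu)$ has finite length --- Proposition \ref{prop:a-in-ola} only provides local composition series, and $A_r(\mu)$ contains the large module $T_r(\mu) = \Phi_r(M_r(\mu)^\vee)$ --- so this step is unjustified. The repair is exactly the paper's argument: use the local composition series of $A(\mu)$ at $\lambda$ to produce a finite-length submodule $N \subseteq A(\mu)$ with $(A(\mu)/N)_\lambda = 0$, and run your injectivity argument for the restriction map $\Hom(A(\mu),A(\lambda)) \to \Hom(N,A(\lambda))$ with $N$ in place of $A_r(\mu)$; your own observation that a nonzero map factoring through the quotient would force $L(\lambda)$ to be a constituent of that quotient, contradicting the vanishing of its $\lambda$-weight space, then closes the argument without ever invoking finiteness of the length of $A_r(\mu)$.
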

\begin{proof}
Let $\mu \in \lie h^\circ$. Since $\LAprojector$ is right adjoint to the 
inclusion of $\CAT{}{}$ in $\Mod{(\lie g, \lie h)}$ there are isomorphisms
\begin{align*}
\Hom_{\CAT{}{}}(L(\mu), A(\lambda)) 
	&\cong \Hom_{\lie g, \lie h}(L(\mu), \ssCoind_{\overline{\lie b}}^{\lie g} 
		\CC_\lambda) \cong \Hom_{\overline{\lie b}, \lie h}
			(L(\mu), \CC_\lambda),
\end{align*}
and since $L(\mu)$ has $\CC_\mu$ as its unique simple quotient as $\overline
{\lie b}$-module, it follows that this space has dimension $1$ when $\lambda = 
\mu$ and zero otherwise. Hence $L(\lambda)$ is the socle of $A(\lambda)$, which 
implies item (\ref{i:pre-a1}). Since the support of $A(\lambda)/L(\lambda)$ is
contained in the set of weights $\mu \prec \lambda$, item (\ref{i:pre-a2}) 
follows. 

Since $A(\mu)$ has a local composition series at $\lambda$, there exists a 
finite length $\lie g$-module $N \subset A(\mu)$ such that $(A(\mu)/N)_\lambda 
= 0$. Now consider the following exact sequence
\begin{align*}
	0 \to \Hom_{\CAT{}{}}(A(\mu)/N, A(\lambda)) \to 
		\Hom_{\CAT{}{}}(A(\mu), A(\lambda)) \to
		\Hom_{\CAT{}{}}(N, A(\lambda)).
\end{align*}
Since any nonzero map to $A(\lambda)$ must contain $L(\lambda)$ in its 
image, the first $\Hom$-space in the long exact sequence is zero and the map 
$\Hom_{\CAT{}{}}(A(\mu), A(\lambda)) \to \Hom_{\CAT{}{}}(N, A(\lambda))$ is 
injective. Now a simple induction shows that $\dim \Hom_{\CAT{}{}}(X, 
A(\lambda))$ is finite for any $X$ of finite length. This proves 
(\ref{i:pre-a3}).
\end{proof}
From now on we will refer to the $A(\lambda)$ as \emph{standard} modules of
$\CAT{}{}$. We point out that the order $\prec$ on $\lie h^\circ$ is not 
interval-finite, and so it is not yet clear that the $A(\lambda)$ are standard
in the sense of highest weight categories. In the coming subsections we will 
compute the Jordan-Holder multiplicities of the standard modules explicitly, 
and in the process find an interval-finite order for its simple constituents.

\subsection{The submodules $T_r(\lambda)$}
Our computation of the simple multiplicities of the module $A(\lambda)$ is 
rather long and technical. The idea is to find an exhaustion of $A(\lambda)$ 
which will allow us to compute the layers of the $\psi$-filtration of 
$A(\lambda)$. Set $T_r(\lambda) = \Phi_r(M_r(\lambda)^\vee)$; this is clearly a 
$\lie g_r$-module, and since it is spanned by $\lie h$-semisimple vectors it is 
in fact a $\lie g_r^+ = \lie g_r + \lie h$-module, and through $\psi$ we can 
see it as a graded module. 
\begin{lem}
For each $r \gg 0$ there are injective maps of $\psi$-graded $\lie 
g_r^+$-modules $T_r(\lambda) \hookrightarrow T_{r+1}(\lambda)$ and 
$T_r(\lambda) \hookrightarrow T(\lambda)$. Furthermore $A(\lambda) \cong 
\varinjlim T_r(\lambda)$ as $\lie g$-modules.
\end{lem}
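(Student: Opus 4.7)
The plan is to build all three maps from two basic facts and then identify the colimit with $A(\lambda)$ by unwinding the large annihilator condition. First, since $\lie p(r) \subset \lie p(r+1)$, induction is transitive and so there is a natural surjection $M_{r+1}(\lambda) \twoheadrightarrow M_r(\lambda)$; applying the contravariant semisimple duality $(-)^\vee$ yields an injection $\iota_r : M_r(\lambda)^\vee \hookrightarrow M_{r+1}(\lambda)^\vee$. Second, because $\lie g_r \subset \lie g_{r+1}$, the centralizer $\lie l[r+1]$ is contained in $\lie l[r]$, hence $\lie l[r+1]' \subseteq \lie l[r]'$ and any $\lie l[r]'$-invariant vector is automatically $\lie l[r+1]'$-invariant. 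This gives an inclusion of subfunctors $\Phi_r \hookrightarrow \Phi_{r+1}$, and composing,
\begin{align*}
T_r(\lambda) = \Phi_r(M_r(\lambda)^\vee) \hookrightarrow \Phi_{r+1}(M_r(\lambda)^\vee) \xhookrightarrow{\Phi_{r+1}(\iota_r)} \Phi_{r+1}(M_{r+1}(\lambda)^\vee) = T_{r+1}(\lambda).
\end{align*}
Since the $\psi$-grading on each $M_r(\lambda)^\vee$ is induced from the one on $\lie g$ and is preserved by $\Phi_r$ and $\iota_r$, this map is $\psi$-graded.

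Next I would produce $T_r(\lambda) \hookrightarrow A(\lambda)$ in two steps. A vector killed by $\lie l[r]'$ satisfies the LAC with respect to $\lie l$, with witness $\lie t = \lie l \cap \lie g_r$, so $T_r(\lambda) = \Phi_r(M_r(\lambda)^\vee) \subset \Phi_{\lie l}(M_r(\lambda)^\vee) = A_r(\lambda)$. Composing with the canonical injection $A_r(\lambda) \hookrightarrow A(\lambda)$ (which exists because $\Phi_{\lie l}$ preserves direct limits and $M(\lambda)^\vee = \varinjlim M_r(\lambda)^\vee$, as was already observed in the preamble to Proposition \ref{prop:a-in-ola}) yields the desired map, again clearly $\psi$-graded, and compatible with the transition maps $T_r(\lambda) \hookrightarrow T_{r+1}(\lambda)$ above.

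Finally, to identify $A(\lambda)$ with $\varinjlim T_r(\lambda)$, it suffices to verify that the image of the colimit exhausts $A(\lambda)$. Fix $v \in A(\lambda)$. Since $A(\lambda) = \varinjlim A_r(\lambda)$, there is $r_0$ with $v \in A_{r_0}(\lambda) \subset M_{r_0}(\lambda)^\vee$. Because $v$ satisfies the LAC, we can pick a finite dimensional $\lie t \subset \lie l$ such that $[\lie l^{\lie t}, \lie l^{\lie t}]$ kills $v$; choose $r_1$ with $\lie t \subset \lie g_{r_1}$, so that $\lie l^{\lie t} \supseteq \lie l[r_1]$ and therefore $\lie l[r_1]'$ kills $v$. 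Taking $r = \max(r_0, r_1)$, the vector $v$ lives in $M_r(\lambda)^\vee$ via the chain of $\iota$'s and, because $\lie l[r] \subset \lie l[r_1]$, is killed by $\lie l[r]'$. Hence $v \in \Phi_r(M_r(\lambda)^\vee) = T_r(\lambda)$, proving surjectivity of $\varinjlim T_r(\lambda) \to A(\lambda)$.

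The only subtle point will be tracking that $v$, originally produced in $A_{r_0}(\lambda)$ as an element of $M_{r_0}(\lambda)^\vee$, is the \emph{same} vector as its image in $M_r(\lambda)^\vee$; this is immediate from the compatibility of the injections $\iota_r$ with the inclusions $A_r(\lambda) \hookrightarrow A(\lambda)$, but worth stating explicitly. Everything else is formal manipulation of adjunctions and of the inclusions of subalgebras.
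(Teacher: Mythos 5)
Your proposal is correct and follows essentially the same route as the paper: identify all the $M_r(\lambda)^\vee$ (and their images under the $\Phi_s$) inside $M(\lambda)^\vee$, use the two containments $\Phi_r \subset \Phi_{r+1}$ and $M_r(\lambda)^\vee \subset M_{r+1}(\lambda)^\vee$ to get the transition maps, and exhaust $A(\lambda)$ via $A(\lambda) = \varinjlim A_r(\lambda)$ together with the definition of $\Phi$ as the union of the $\Phi_s$. Your version is merely more explicit than the paper's about why a LAC vector of $A_{r_0}(\lambda)$ lands in some $T_r(\lambda)$, which is a reasonable elaboration rather than a different argument.
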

\begin{proof}
Identifying $M_r(\lambda)^\vee$ with its image inside $M(\lambda)^\vee$ it is 
clear that 
$M_r(\lambda)^\vee \subset M_{r+1}(\lambda)^\vee$ and that 
\begin{align*}
T_r(\lambda) = 
\Phi_r(M_r(\lambda)^\vee) \subset \Phi_{r+1}(M_r(\lambda)^\vee) \subset 
\Phi_{r+1}(M_{r+1}(\lambda)^\vee) = T_{r+1}(\lambda).
\end{align*}
The desired maps are given by the inclusions, which trivially satisfy the 
statement. Now if $m \in \Phi_r(M(\lambda)^\vee)$ then there exists $s \geq r$ 
such that $m \in \Phi_r(M_s(\lambda)^\vee) \subset T_s(\lambda)$, so 
$A(\lambda) = \bigcup_{r \geq 0} T_r(\lambda) \subset M(\lambda)^\vee$.
\end{proof}

The definition of $\psi$-filtrations given in \ref{defn:psi-filtration} is
easily adapted to $\psi$-graded $\lie g_r^+$-modules. The fact that $A(\lambda)$
is the direct limit of the $T_r(\lambda)$ in the category of $\psi$-graded 
$\lie g_r^+$-modules implies that the $p$-th module in the $\psi$-filtration of 
$A(\lambda)$ is the limit of the $p$-th module in the $\psi$ filtrations of the
$T_r(\lambda)$. Furthermore, since direct limits are exact we can recover the 
$\lie s$-module $\mathcal S_p(A(\lambda))$ as the direct limit of the $\lie 
s_r$-modules $\mathcal S_p(T_r(\lambda))$. This will allow us to compute their 
characters and, using Proposition \ref{prop:s-filtration}, the simple 
multiplicities of $A(\lambda)$. 

\begin{rmk}
Notice that it is not true that the Jordan-Holder multiplicities of the 
$T_r(\lambda)$ can be recovered from the multiplicities of the $\lie 
s_r$-modules $\mathcal S_p(T_r(\lambda))$. Indeed, Proposition 
\ref{prop:s-filtration} follows from Proposition \ref{prop:hwm-properties}, and 
the analogous statement is clearly false for $\lie g_r$.
\end{rmk}
Set $R_{i} = \lie g^\psi_{\leq -i}$, i.e. the subspace of $\lie g$ spanned by 
all root-spaces corresponding to roots $\alpha$ with $\psi(\alpha) \leq -i$, 
and set $R_{i,r} = R_i \cap \lie g_r$. We see $R_i$ as $\lie b$-module through 
the isomorphism $R_i \cong \lie g/\lie g_{> -i}^\psi$. We set $\mathcal R = 
\bigoplus_{k=1}^n 2^{k-1} R_k$ and $\mathcal R_r = \bigoplus_{k=1}^n 2^{k-1} 
R_{k,r}$. 
\begin{lem}
\label{lem:W}
There exists an isomorphism of $\lie s_r$-modules
\begin{align*}
\mathcal S_p T_r(\lambda) 
	&\cong \left(\Ind^{\lie s_r}_{\lie s_r \cap \lie b} 
		\SS^\bullet(\mathcal R_r)_{p - \psi(\lambda)} 
		\ot \CC_\lambda \right)^\vee
\end{align*}

\end{lem}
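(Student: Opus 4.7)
The strategy is to transfer the computation to the parabolic Verma module $M_r(\lambda)$ via duality, exploit its PBW structure as an $\lie l[r]^+$-module, and peel off the $\lie l[r]'$-invariants $\psi$-degree by $\psi$-degree. First I would observe that by Proposition \ref{prop:ss-dual-coind} applied to $\lie s_r$, together with the fact that $^\vee$ is involutive on $\lie s_r$-modules in $\overline{\mathcal O}_{\lie s_r}$ with finite-dimensional weight spaces, the claim is equivalent to an isomorphism of $\lie s_r$-modules
\begin{align*}
(\mathcal S_p T_r(\lambda))^\vee
\;\cong\; \Ind_{\lie s_r \cap \lie b}^{\lie s_r}\!\bigl(\SS^\bullet(\mathcal R_r)_{p-\psi(\lambda)} \otimes \CC_\lambda \bigr).
\end{align*}

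Second, the PBW isomorphism $M_r(\lambda) \cong \SS^\bullet(\overline{\lie u}(r)) \otimes \CC_\lambda$ (as $\lie l[r]^+$- and by inflation $\overline{\lie b}_r$-module) grades $M_r(\lambda)$ by $\psi$-degree; dualizing weight-by-weight and applying $\Phi_r$ identifies $T_r(\lambda)$ with the $\lie l[r]'$-invariants of the dual, and $\mathcal S_p T_r(\lambda)$ with the degree-$p$ invariants modulo the contribution generated from strictly higher $\psi$-degrees via the negative-$\psi$ part of $\lie g_r$. Using the decomposition of $\overline{\lie u}(r)$ from subsection \ref{ss:sym-algs}, together with Proposition \ref{prop:ur-computation}---which expresses $\SS^\bullet(\overline{\lie u}(r))$ as an explicit direct sum of products of Schur functors applied to $\VV^{(k)}[r]$, $\VV^{(l)}_*[r]$, and the trivial $\lie l[r]$-modules $X_l, Y_k, Z$---and applying Lemma \ref{lem:la-duality} summand by summand, the $\lie l[r]'$-invariants collapse to a $\overline{\lie b}_r$-module built from the $Y_k$, $X_l$, $Z$ pieces only, still graded by $\psi$.

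The final step, and the main obstacle, is the combinatorial identification of this $\overline{\lie b}_r$-module with $\SS^\bullet(\mathcal R_r)_{p-\psi(\lambda)} \otimes \CC_\lambda$, which accounts for the multiplicities $2^{k-1}$ in $\mathcal R_r = \bigoplus_{k=1}^n 2^{k-1} R_{k,r}$. Since $\mathcal R_r$ contributes $(2^d-1)$ copies of $\lie g^\psi_{-d} \cap \lie g_r$ at each $\psi$-degree $-d$---exactly the count of nonempty subsets of a $d$-element set---I would expect the identification to come from a PBW-basis argument: each nonempty subset $J \subseteq \{1,\ldots,d\}$ indexes a distinct invariant element of $\psi$-degree $-d$ corresponding to a way of partitioning the contribution of $\overline{\lie u}(r)^\psi_{-d}$ across its interactions with the trivial $\lie l[r]$-summands. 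Verifying this cleanly---most plausibly by matching both sides as filtered $\overline{\lie b}_r$-modules with identical associated graded pieces generated by corresponding highest-weight vectors, and checking the top layer reduces to the Verma-like generator---is the technical core of the argument, and in view of the introduction the paper defers the detailed bookkeeping to an appendix.
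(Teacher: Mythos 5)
Your overall strategy---dualise via Proposition \ref{prop:ss-dual-coind}, use the PBW presentation of $M_r(\lambda)$, compute the $\lie l[r]'$-invariants of the dual using the fact that tensor $\lie l[r]$-modules have one-dimensional invariant spaces exactly on the ``diagonal'', and then identify the result with $\SS^\bullet(\mathcal R_r)$---is the same as the paper's. But the step you yourself flag as ``the main obstacle'' is precisely the content of the lemma, and the mechanism you propose for it would not go through. The multiplicities $2^{k-1}$ in $\mathcal R_r$ do not arise from a PBW-basis count of invariant monomials indexed by nonempty subsets. They arise from a coupling that your summand-by-summand application of Lemma \ref{lem:la-duality} hides: after decomposing $\SS^\bullet(\overline{\lie f}_r\oplus\overline{\lie a}[r])$ by Schur functors, each summand is of the form $(\text{an }\lie s_r\text{-module})\boxtimes \Schur_{\eta_k}(\VV^{(k)}[r])\ot\Schur_{\nu_k}(\VV^{(k)}_*[r])$, and $\Phi_r$ of the dual kills everything except the terms with $\boldsymbol\eta=\boldsymbol\nu$. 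Imposing this diagonal condition ties together the Littlewood--Richardson indices coming from the three kinds of blocks ($\VV^{(k)}[r]\ot A_l$, $B_k\ot\VV^{(l)}_*[r]$, and $\VV^{(k)}[r]\ot\VV^{(l)}_*[r]$), and the surviving sum must then be resummed. The paper does this by a recursion on $k$: at each stage one applies the Cauchy-type identity of Lemma \ref{lem:schur-identity} to sum over $\eta_k$ and $\boldsymbol\beta_{\bullet,k}$, and the effect is to replace the spaces $A_1,\dots,A_k$ by $\overline A_k=2^{k-1}A_1\oplus 2^{k-2}A_2\oplus\cdots\oplus A_k$; the doubling at each step is what produces the powers of $2$ and the count $2^{l-k+1}-1$ of copies of $A_k\ot B_l$ that reassembles into $\mathcal R_r$. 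Your subset heuristic predicts the right graded dimension but does not produce an isomorphism of $\lie s_r$-modules, and your fallback of ``matching associated graded pieces generated by corresponding highest-weight vectors'' is not available: the object to be identified is a direct sum of Schur-functor images of the $\lie s_r$-modules $A_k$, $B_l$, not a highest weight module, so there is no generator to match.

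A second, smaller gap: you do not explain how the top layer $\mathcal S_p$ of the $\psi$-filtration is extracted. In the paper this uses that $T_r(\lambda)\cong\ssCoind_{\overline{\lie b}_r}^{\lie g_r}Q^\vee\ot\CC_\lambda$ with $Q\cong\SS^\bullet(\mathcal R_r)$, that semisimple coinduction is exact on duals (Proposition \ref{prop:ss-dual-coind} again), so the $p$-th layer of the filtration is the coinduction of the single graded piece $Q^\vee_{p-\psi(\lambda)}$, and that its top $\psi$-component is the coinduction from $\overline{\lie b}\cap\lie s_r$ up to $\lie s_r$. Without this, your description of $\mathcal S_pT_r(\lambda)$ as ``degree-$p$ invariants modulo higher contributions'' has not yet been matched to the right-hand side of the lemma.
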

The proof of this statement is quite technical and not particularly 
illuminating, so we postpone it until the end of this section. It is, however
the main step to compute the Jordan-Holder multiplicities of standard modules

\begin{thm}
\label{thm:standard-mults}
Let $\lambda$ and $\mu$ be eligible weights. Then 
\begin{align*}
[A(\lambda):L(\mu)] = m(\lambda + \nu, \mu) \dim \SS^\bullet(\mathcal 
	R^\vee)_\nu.
\end{align*} 
In particular this is nonzero if and only if $\mu$ is of the form $\sigma \cdot 
(\lambda + \nu)$ for a weight $\nu$ in the support of $\SS^\bullet(\mathcal 
R^\vee)$ and $\sigma \in \mathcal W^\circ$.
\end{thm}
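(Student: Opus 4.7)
The plan is to combine Proposition~\ref{prop:s-filtration}, which converts the multiplicity of $L(\mu)$ in any object of $\CAT{}{}$ into the multiplicity of $L_{\lie s}(\mu)$ in a single $\psi$-layer, with Lemma~\ref{lem:W}, which describes those layers explicitly for the exhausting family $T_r(\lambda)$ of $A(\lambda)$, and then pass to the limit in $r$.

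First, set $p = \psi(\mu)$. By Propositions~\ref{prop:a-in-ola} and~\ref{prop:s-filtration}, $[A(\lambda):L(\mu)] = [\mathcal S_p A(\lambda): L_{\lie s}(\mu)]$. Since $A(\lambda) \cong \varinjlim T_r(\lambda)$ as $\psi$-graded modules (by the lemma preceding Lemma~\ref{lem:W}) and $\varinjlim$ is exact, both the $\psi$-filtration and the extraction of the top component of a layer commute with the limit, so $\mathcal S_p A(\lambda) \cong \varinjlim \mathcal S_p T_r(\lambda)$.

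Next, I would apply Lemma~\ref{lem:W}. The space $\SS^\bullet(\mathcal R_r)_{p - \psi(\lambda)}$ is a finite-dimensional semisimple $\lie h$-module; splitting it into its $\lie h$-weight components $\SS^\bullet(\mathcal R_r)_\nu$ with $\psi(\nu) = p - \psi(\lambda)$, and using that induction distributes over direct sums, I obtain
\[
\Ind_{\lie s_r \cap \lie b}^{\lie s_r}\!\bigl(\SS^\bullet(\mathcal R_r)_{p - \psi(\lambda)} \otimes \CC_\lambda\bigr) \;\cong\; \bigoplus_{\nu}\bigl(\dim \SS^\bullet(\mathcal R_r)_\nu\bigr)\, M_{\lie s_r}(\lambda + \nu).
\]
Since semisimple duality preserves composition multiplicities,
\[
[\mathcal S_p T_r(\lambda) : L_{\lie s_r}(\mu)] \;=\; \sum_\nu \bigl(\dim \SS^\bullet(\mathcal R_r)_\nu\bigr)\, m_r(\lambda + \nu, \mu),
\]
where $m_r(-,-)$ denotes the multiplicity of a simple in a Verma module for $\lie s_r$.

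Finally, fix $\mu$ and $\nu$ and pass to the limit $r \to \infty$. By Theorem~\ref{thm:dynkin-verma}(b), $m_r(\lambda+\nu, \mu)$ stabilises to $m(\lambda+\nu, \mu)$ for $r \gg 0$; at the same time $\dim \SS^\bullet(\mathcal R_r)_\nu$ stabilises to $\dim \SS^\bullet(\mathcal R)_\nu$, which equals $\dim \SS^\bullet(\mathcal R^\vee)_\nu$ under the convention $(N^\vee)_\nu = (N_\nu)^*$. For given $\mu$ only finitely many $\nu$ can yield a nonzero contribution (the support of $\mu$ pins down both the $\lie h$-weight of $\nu$ modulo the root lattice and the value of $\psi(\nu)$, cutting out a finite set), so the sums stabilise termwise. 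Summing over $\nu$ yields the claimed formula. The second assertion then follows at once from Theorem~\ref{thm:dynkin-verma}(a): $m(\lambda+\nu, \mu) \neq 0$ forces $\mu = \sigma\cdot(\lambda+\nu)$ for some $\sigma \in \mathcal W(\lie s) = \mathcal W^\circ$.

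The main obstacle is the commutation $\mathcal S_p \circ \varinjlim = \varinjlim \circ\, \mathcal S_p$, which requires that the transition maps $T_r(\lambda) \hookrightarrow T_{r+1}(\lambda)$ be injective and strictly compatible with the $\psi$-grading — precisely what is provided by the preceding lemma. Everything else is bookkeeping: the stabilisation of $m_r(\lambda+\nu,\mu)$ is the content of Theorem~\ref{thm:dynkin-verma}(b), and the identification of weight multiplicities in $\SS^\bullet(\mathcal R_r)$ with those in $\SS^\bullet(\mathcal R^\vee)$ is a direct unpacking of semisimple duality on $\lie h$-weight spaces.
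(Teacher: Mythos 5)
Your proposal is correct and follows essentially the same route as the paper: reduce to the layers $\mathcal S_p$ via Proposition \ref{prop:s-filtration}, pass the computation through the direct limit of the $T_r(\lambda)$, apply Lemma \ref{lem:W}, and invoke stabilisation of the multiplicities $m_r(\lambda+\nu,\mu)$ from Theorem \ref{thm:dynkin-verma}. One small imprecision: the $\lie h$-weight decomposition of $\SS^\bullet(\mathcal R_r)_{p-\psi(\lambda)}$ is not a decomposition into $\lie s_r\cap\lie b$-submodules, so induction produces a \emph{filtration} by the Verma modules $M_{\lie s_r}(\lambda+\nu)$ rather than a direct sum (this is the standard fact the paper cites from Humphreys), but since multiplicities are additive on filtrations your count is unaffected.
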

\begin{proof}
Since $A(\lambda)^\psi_p = \bigcup_{r \geq 0} T_r(\lambda)^\psi_p$, it follows 
that $\mathcal F_p A(\lambda) = \varinjlim \mathcal F_p T_r(\lambda)$. Exactness
of direct limits of vector spaces implies that 
\begin{align*}
\frac{\mathcal F_p A(\lambda)}{\mathcal F_{p-1} A(\lambda)}
&\cong 
	\varinjlim \frac{\mathcal F_p T_r(\lambda)}{\mathcal F_{p-1} T_r(\lambda)}
\end{align*}
and so taking top degree components we get $\mathcal S_p A(\lambda) = \varinjlim
\mathcal S_p T_r(\lambda)$. It follows that $[A(\lambda): L(\mu)] = 
[T_r(\lambda): L_{\lie s_r}(\mu)]$ for large $r$. 

Using Lemma \ref{lem:W} and standard results on category $\mathcal O$ for the 
reductive algebra $\lie s_r$, see for example \cite{Humphreys08}*{3.6 Theorem}, $\mathcal S_p T_r(\lambda)$ has a filtration by dual Verma modules of the form 
$M_{\lie s_r}(\lambda + \nu)^\vee$, and each of these modules appears with 
multiplicity $\dim \SS^\bullet(\mathcal R^\vee)_\nu$. Thus 
\begin{align*}
[\mathcal S_p T_r(\lambda): L_{\lie s_r}(\mu|_r)]
	&= [M_{\lie s_r}(\lambda + \nu|_r): L_{\lie s_r}(\mu|_r)] 
		\dim \SS^\bullet(\mathcal R_r^\vee)_\nu, 
\end{align*}
and for large $r$ this is $m(\lambda + \nu, \mu) \dim \SS^\bullet(\mathcal 
R^\vee)_\nu$, as desired.
\end{proof}

\subsection{An interval finite order on $\lie h^\circ$}
We are now ready to show that there is an interval finite order for the simple 
constituents in standard objects of $\CAT{}{}$. In view of Theorem 
\ref{thm:standard-mults} there is only one reasonable choice.

\begin{defn}
Let $\lambda, \mu \in \lie h^\circ$. We write $\mu <_{\order} \lambda$
if there exist $\nu$ in the support of $W$ and $\sigma \in \mathcal W(\lie s)$ 
such that $\mu = \sigma \cdot (\lambda + \nu) \preceq \lambda$.
\end{defn}

\begin{lem}
\label{lem:order}
The order $<_{\order}$ is interval finite.
\end{lem}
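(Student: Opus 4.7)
The plan is to combine the $\psi$-grading of Example \ref{ex:gradings} with the finite Weyl orbit analysis of Lemma \ref{lem:linked-weights} to reduce interval-finiteness to a finite-dimensional counting problem. The first step is to establish that $\psi$ is invariant under the dot action of $\mathcal W(\lie s)$: for $\sigma \in \mathcal W(\lie s)$, the shift $\sigma(x) - x$ lies in the $\ZZ$-span of the finite roots, on which $\psi$ vanishes by construction, so $\psi(\sigma \cdot x) = \psi(x + \rho) - \psi(\rho) = \psi(x)$. Moreover, since $\mathcal R^\vee$ has the same weights as $\mathcal R$, namely the negative infinite roots $\alpha$ with $\psi(\alpha) \leq -1$, any $\nu$ in the support of $W = \SS^\bullet(\mathcal R^\vee)$ satisfies $\psi(\nu) \leq 0$, with equality if and only if $\nu = 0$.

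Suppose now $\mu \leq_\order \eta \leq_\order \lambda$, with $\eta = \sigma_1 \cdot (\lambda + \nu_1)$ and $\mu = \sigma_2 \cdot (\eta + \nu_2)$ for appropriate $\sigma_i \in \mathcal W(\lie s)$ and $\nu_i$ in the support of $W$. Then $\psi(\eta) = \psi(\lambda) + \psi(\nu_1)$ and $\psi(\mu) = \psi(\eta) + \psi(\nu_2)$, so $\psi(\mu) \leq \psi(\eta) \leq \psi(\lambda)$. Since these values are integers, $\psi(\eta)$ takes only finitely many values; fix one, say $p$, so that $\psi(\nu_1) = p - \psi(\lambda)$ is determined.

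For each fixed $p$, decompose $\eta - \lambda = \tau_1 + \nu_1$, where $\tau_1 := \sigma_1(\lambda + \nu_1 + \rho) - (\lambda + \nu_1 + \rho)$ lies in the finite-root lattice and $\nu_1$ belongs to the support of $W$. The plan is to argue separately that (i) the $\nu_1$'s are confined to a finite set and (ii) once $\nu_1$ is fixed, $\sigma_1$ is constrained to a finite Weyl group. For (ii), since $\lambda$ and $\mu$ are $r_0$-eligible for some fixed $r_0$ and $\psi(\nu_1)$ is a bounded integer, an argument analogous to Lemma \ref{lem:linked-weights} applied to the linked pair $(\lambda + \nu_1, \eta)$ shows that $\sigma_1$ may be chosen in $\mathcal W(\lie s_r)$ for some $r$ depending only on $\lambda$, $\mu$, and $p$, which is a finite group and yields finitely many candidate shifts $\tau_1$. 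For (i), the combined dominance conditions $\mu \preceq \eta \preceq \lambda$ force the decomposition of the infinite-root part of $\lambda - \mu$ into the two summands $-\nu_1$ and $-\nu_2$ (up to the finite-root corrections), and together with the $r_0$-eligibility of the endpoints this confines the indices appearing in $\nu_1$ to a bounded set. Summing the finitely many possibilities for $\nu_1, \tau_1$ over the finitely many $p$ then gives the finiteness of the interval.

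The main obstacle is step (i): a priori the set of negative infinite roots of each fixed $\psi$-degree is infinite, and the dominance condition $\eta \preceq \lambda$ by itself is not restrictive enough to rule out $\nu_1$ involving arbitrarily large indices. Overcoming this requires exploiting the full sandwich $\mu \preceq \eta \preceq \lambda$ together with the eligibility of the endpoints to confine the infinite-root content of both $\lambda - \eta$ and $\eta - \mu$ to the indices supported on $\lambda$ and $\mu$; this is the essential combinatorial content of the lemma.
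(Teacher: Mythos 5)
Your reduction to finitely many values of $p=\psi(\eta)$ and your treatment of the Weyl-group part (step (ii)) are sound and parallel ingredients of the paper's argument. But the proof is incomplete at exactly the point you flag yourself: step (i), the claim that the sandwich $\mu\preceq\eta\preceq\lambda$ together with the $r_0$-eligibility of the endpoints confines $\nu_1$ to a finite set, is asserted rather than proved, and your last paragraph concedes that this is ``the essential combinatorial content of the lemma.'' Since for each fixed $\psi$-degree the set of roots of that degree is infinite (the indices run over all of $\ZZ^\times$), all the work lies precisely there. Moreover, the route you sketch --- confining the support of $\nu_1$ by the support of $\lambda-\eta$ --- cannot work as stated, because the finite-root correction $\tau_1$ can cancel the large-index content of $\nu_1$: for example $\nu_1=\epsilon_{-i}^{(1)}-\epsilon_i^{(1)}$ together with $\tau_1=-(\epsilon_1^{(1)}-\epsilon_i^{(1)})-(\epsilon_{-i}^{(1)}-\epsilon_{-1}^{(1)})$ gives $-\nu_1-\tau_1=\epsilon_1^{(1)}-\epsilon_{-1}^{(1)}$, supported on $\pm\interval{1}$, for every $i$. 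To rule such decompositions out you must use that $\tau_1$ is not an arbitrary element of the finite root lattice but the dot-action shift $\sigma_1(\lambda+\nu_1+\rho)-(\lambda+\nu_1+\rho)$, so that $\eta$ and $\lambda+\nu_1$ share their $\mathcal W(\lie s)$-orbit invariants; none of this rigidity is exploited in your write-up.

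The paper sidesteps a direct parametrization of the interval. It factors $<_\order$ into elementary steps $\triangleleft$ of two types (a single dot-reflection in a simple root making the weight smaller, or the addition of a single $\psi$-negative root), observes that every step strictly increases one of the two integer quantities $\psi(\cdot)$ or $(\cdot,\rho_{r+1})$, so that any $\triangleleft$-chain from $\mu$ to $\lambda$ has length at most $\psi(\lambda-\mu)+(\lambda-\mu,\rho_{r+1})$, and then checks that each weight in the interval admits only finitely many one-step neighbours inside the interval, using eligibility of $\lambda$ and $\mu$ to confine the relevant reflections and roots to $\lie g_{r+1}$. You would need either to import that chain decomposition or to supply the missing combinatorial argument for step (i) before your outline becomes a proof.
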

\begin{proof}
Let us write $\mu \triangleleft \lambda$ if either 
\begin{enumerate}[(i)]
\item there is a simple root $\alpha$ such that $\mu = s_\alpha \cdot \lambda 
\prec \lambda$ or

\item $\mu = \lambda + \nu$ with $\nu$ a root such that $\psi(\nu) < 0$.
\end{enumerate}
Notice that $\mu <_{\order} \lambda$ implies $\mu \prec \lambda$. 
Since the support of $\mathcal R$ is the $\ZZ_{>0}$-span of the space of roots 
of $\lie g^{\psi}_{<0}$, it follows that $\triangleleft$ is a subrelation of 
$<_{\order}$, and in fact this order is the reflexive transitive closure of 
$\triangleleft$. To prove the statement, it is enough to show that there are 
only finitely many weights $\gamma$ such that $\mu \triangleleft \gamma 
<_\order \lambda$, and that there is a global bound on the length of chains of 
the form $\mu \triangleleft \lambda_1 \triangleleft \cdots \triangleleft 
\lambda_l \triangleleft \lambda$. 

If we are in case $(i)$ then $\gamma = s_\alpha \cdot \mu$, and since $\gamma 
\prec \lambda$ it follows that $\alpha \in \mathcal W(\lie s_r)$, so there are 
only finitely many options in this case. Notice also that $(\mu, \rho_{r+1})
< (\gamma, \rho_{r+1}) \leq (\lambda, \rho_{r+1})$, where $\rho_{r+1}$ is the
sum of all positive roots of $\lie s_{r+1}$. Thus in a chain as above there can
be at most $(\lambda - \mu, \rho_{r+1})$ inequalities of type $(i)$.

If we are in case $(ii)$, then $\lambda - \mu \succ 0$ and $\gamma =
\mu - \nu$ for some $\psi$-negative root $\nu$. Since $\lambda - \gamma =
\lambda - \mu - \nu \succeq 0$, it is enough to check that there are only 
finitely many $\nu$ such that the last inequality holds. Now since $\lambda$ and
$\mu$ are $r$-eligible, $(\lambda - \mu, \alpha) = 0$ for all roots outside of
the root space of $\lie g_{r+1}$. This means that $\nu$ must be a negative root 
of $\lie g_{r+1}$, of which there are finitely many. Notice also that in this 
case $\psi(\mu) < \psi(\gamma) \leq \psi(\lambda)$, and hence in any chain as 
above there are at most $\psi(\lambda - \mu)$ inequalities of type $(ii)$. Thus
any chain is of length at most $\psi(\lambda - \mu) + (\lambda - \mu, 
\rho_{r+1})$ and we are done.
\end{proof}

\section*{Appendix: A proof of Lemma \ref{lem:W}}

\subsection{A result on Schur functors}
We begin by setting some notation for Schur functors and Littlewood-Richardson 
coefficients. Recall that given two vector spaces $V, W$ and partitions 
$\lambda, \mu$ we have isomorphisms, natural in both variables,
\begin{align*}
\Schur_\lambda(V) \otimes \Schur_\mu(V)
	&= \bigoplus_{\nu} c_{\lambda, \mu}^\nu \Schur_{\nu}(V)
&\Schur_\lambda(V \oplus W) 
	&= \bigoplus_{\alpha, \beta} 
		c_{\alpha, \beta}^\gamma \Schur_\alpha(V) \ot \Schur_\beta(W).
\end{align*}

Given an $m$-tuple of partitions $\boldsymbol \alpha = (\alpha_1, \ldots, 
\alpha_m)$ and an $m$-tuple of vector spaces $U_1, \ldots, U_m$ we set 
$\Schur_{\boldsymbol \alpha}(U_1, \ldots, U_m) = \Schur_{\alpha_1}(U_1) 
\otimes \cdots \otimes \Schur_{\alpha_m}(U_m)$. It follows that for each
$\gamma \in \Part$ there exists $c_{\boldsymbol \alpha}^\gamma \in \ZZ_{\geq 
0}$ such that the following isomorphisms hold
\begin{align*}
\Schur_{\gamma}(U_1 \oplus \cdots \oplus U_m)
	&\cong \bigoplus_{\boldsymbol \alpha} c^\gamma_{\boldsymbol \alpha}
		\Schur_{\boldsymbol \alpha}(U_1, \ldots, U_m);\\
\Schur_{\boldsymbol \alpha}(U, U, \ldots, U)
	&\cong \bigoplus_{\gamma} c^\gamma_{\boldsymbol \alpha} 
		\Schur_{\gamma}(U). 
\end{align*}
These easily imply the following lemma.
\begin{lem}
\label{lem:schur-identity}
Fix $\boldsymbol \alpha \in \Part^m$. Given vector spaces $U_1, \ldots, U_m$ 
there is an isomorphism, natural in all variables,
\begin{align*}
\bigoplus_{\gamma \in \Part} \bigoplus_{\boldsymbol \beta \in \Part^m} 
	c_{\boldsymbol \alpha}^\gamma c_{\boldsymbol \beta}^\gamma
		&\Schur_{\boldsymbol \beta}(U_1, U_2, \ldots, U_m)
			\cong \Schur_{\boldsymbol \alpha}(U, U, \ldots, U)
\end{align*}
where $U = U_1 \oplus U_2 \oplus \cdots \oplus U_m$.
\end{lem}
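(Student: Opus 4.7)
The plan is to apply the two multi-variable Schur isomorphisms stated immediately before the lemma in succession, one to break up each factor of the tensor product on the right-hand side and the other to recombine the pieces. Both identities are natural in the vector space variables, so the composite isomorphism will automatically be natural, which takes care of the naturality clause with no extra work.

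First I would apply the second displayed identity preceding the lemma, namely
\begin{align*}
\Schur_{\boldsymbol \alpha}(U,U,\ldots,U)
    &\cong \bigoplus_{\gamma \in \Part} c^{\gamma}_{\boldsymbol \alpha}\,
            \Schur_{\gamma}(U),
\end{align*}
which expresses the right-hand side of the claim as a direct sum of single Schur functors evaluated on $U$. Then, substituting $U = U_1 \oplus \cdots \oplus U_m$ into each $\Schur_{\gamma}(U)$ and applying the first displayed identity,
\begin{align*}
\Schur_{\gamma}(U_1 \oplus \cdots \oplus U_m)
    &\cong \bigoplus_{\boldsymbol \beta \in \Part^m}
            c^{\gamma}_{\boldsymbol \beta}\,
            \Schur_{\boldsymbol \beta}(U_1, \ldots, U_m),
\end{align*}
gives exactly the desired decomposition after swapping the order of the two direct sums (over $\gamma$ and $\boldsymbol\beta$).

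Since the two identities being composed are natural in the respective vector space arguments, and the $c^{\gamma}_{\boldsymbol \alpha}, c^{\gamma}_{\boldsymbol \beta}$ are ordinary integer multiplicities independent of the $U_i$, the resulting isomorphism is natural in $U_1, \ldots, U_m$. There is no genuine obstacle here; the only minor care needed is to note that reindexing the double sum and pulling the scalar multiplicities outside is legitimate because everything in sight is a (finite, for fixed weight) direct sum of functors of finite-dimensional character. Hence the lemma follows by a direct two-line composition of the known Schur-functor identities.
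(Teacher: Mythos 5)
Your proof is correct and is exactly the argument the paper intends: the text states the two multi-variable Schur identities and remarks that they "easily imply" the lemma, meaning precisely the two-step composition (decompose $\Schur_{\boldsymbol\alpha}(U,\ldots,U)$ into $\bigoplus_\gamma c^\gamma_{\boldsymbol\alpha}\Schur_\gamma(U)$, then expand each $\Schur_\gamma(U_1\oplus\cdots\oplus U_m)$) that you carry out. Nothing further is needed.
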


\subsection{The module $\mathcal H$}
In order to understand the module $T_r(\lambda)$ we will need to look under
the hood of the dual modules $M_r(\lambda)^\vee$. For this subsection only we
write $\WW = \VV[r]$, so in particular $\WW^{(k)} = \VV^{(k)}[r]$ and 
$\WW^{(l)}_* = \VV^{(l)}_*[r]$, etc.

We begin by decomposing the parabolic algebra $\lie p(r)$ as $\lie a[r] \oplus 
\lie b_r \oplus \lie f_r$, where $\lie b_r = \lie b \cap \lie g_r,
\lie a[r] = \bigoplus_{k<l} \WW^{(k)} \otimes \WW^{(l)}$,
and $\lie f_r$ is the unique root subspace completing the decomposition. 
\begin{align*}
\begin{tikzpicture}
	\filldraw[draw=none,black!20!white]
		(0.2,3) rectangle (0.8,2.8)
		(1.2,3) rectangle (1.8,2.8)
		(2.2,3) rectangle (2.8,2.8)
		(1.2,2.2) rectangle (1.8,1.8)
		(2.2,2.2) rectangle (2.8,1.8)
		(2.2,1.2) rectangle (2.8,0.8)
		(0.8,2.8) rectangle (1.2,2.2)
		(1.8,2.8) rectangle (2.2,2.2)
		(2.8,2.8) rectangle (3,2.2)
		(1.8,1.8) rectangle (2.2,1.2)
		(2.8,1.8) rectangle (3,1.2)
		(2.8,0.8) rectangle (3,0.2)
		;
	\draw[step=1cm,black] (0.01,0.01) grid (2.99,2.99); 
	\node [below right, text width=6cm,align=justify] at
	(4,2.5) {The picture of $\lie f_r$. Notice that each rectangle is a direct summand of $\lie f_r$ as right $\lie l[r] \oplus \lie s_r$-module};
\end{tikzpicture}
\end{align*}

Using these spaces we can decompose $\lie g = \lie g[r] \oplus (\lie f_r \oplus 
\overline{\lie f}_r) \oplus \lie g_r$. This is a decomposition of $\lie g$ as 
$\lie g[r] \oplus \lie g_r$-module, since $\lie f_r \oplus \overline{\lie f}_r$ 
is stable by the adjoint action of this subalgebra and isomorphic to 
$(\WW^n \otimes (V_r)^*) \oplus (V_r \ot \WW_*^n)$. We also obtain a 
decomposition of $\overline {\lie b}_r \oplus \lie l[r] \oplus 
\overline{\lie a}[r]$-modules
\begin{align*}
\overline{\lie p}(r) &= \overline{\lie b}_r \oplus \overline{\lie f_r} 
	\oplus \lie l[r] \oplus \overline{\lie a}[r].
\end{align*}

Set $\lie q_r = \lie s_r + \lie b_r$. The subspaces $\lie a[r]$ and 
$\overline{\lie a}[r]$ are trivial $\lie q_r$-submodules of $\lie g$. The 
subspace $\lie f_r$ is also a $\lie q_r$-submodule of $\lie g$, and we endow 
$\overline{\lie f}_r$ with the structure of $\lie q_r$-modules through the 
vector space isomorphism $\overline{\lie f}_r = \lie f_r \oplus \overline{\lie 
f}_r / \lie f_r$. We set $\mathcal H = \SS^\bullet(\overline{\lie f}_r 
\oplus \overline{\lie a}[r])$. By the PBW theorem we have an isomorphism of 
$\lie g_r \oplus \lie l[r]$-modules
\begin{align*}
M_r(\lambda) &\cong U(\lie g) \ot_{\lie p(r)} \CC_\lambda
	\cong U(\lie g_r) \ot_{\lie b_r} (\mathcal H \ot \CC_\lambda)
	= \Ind_{\lie b_r}^{\lie g_r} \mathcal H \ot \CC_\lambda.
\end{align*}
where the $\lie l[r]$-module structure is determined by that of $\mathcal H$.
Thus when computing the $\lie l[r]'$-invariant vectors of the semisimple dual
of this module we get
\begin{align*}
T_r(\lambda) &= \Phi_r(M_r(\lambda)^\vee)
	\cong \ssCoind_{\overline{\lie b}_r}^{\lie g_r}
		\Phi_r(\mathcal H^\vee) \ot \CC_\lambda.
\end{align*}

\subsection{The invariants of $\mathcal H^\vee$}
Our task is thus to compute $\Phi_r(\mathcal H^\vee)$, and for this we will 
need a finer description of $\overline{\lie f}_r$. 
For each $k \in \interval{0,n}$ denote by $B_k = V_{r,+}^{(k)} \oplus 
V_{r,-}^{(k-1)}$ and $A_k = B_k^*$. These are $\lie s_r$-modules and we have a
decomposition of $\lie l[r] \oplus \lie s_r$-modules
\begin{align*}
\overline{\lie f}_r &= 
	\bigoplus_{k \geq l}  
		 (\WW^{(k)} \otimes A_l) \oplus (B_k \otimes\WW^{(l)}_*),\\
\SS^\bullet(\overline{\lie f}_r) &\cong
	\bigoplus_{\boldsymbol \alpha, \boldsymbol \beta}
		\bigotimes_{k \geq l}
			\Schur_{\alpha_{k,l}}(A_l) \otimes \Schur_{\beta_{k,l}}(B_k)
			\otimes \Schur_{\alpha_{k,l}}(\WW^{(k)}) \otimes 
				\Schur_{\beta_{k,l}}(\WW^{(k)}_*),
\end{align*}
where the sum runs over all families $\boldsymbol \alpha =(\alpha_{k,l})_{1 
\leq k \leq l \leq n}$ and $\boldsymbol \beta =(\beta_{k,l})_{1 \leq k \leq l 
\leq n}$. On the other hand we have decompositions
\begin{align*}
\overline{\lie a}[r] &= \bigoplus_{k > l} \WW^{(k)} \otimes \WW^{(l)}_*;&
\SS^\bullet(\overline{\lie a}[r]) &= 
	\bigoplus_{\boldsymbol \gamma} \bigotimes_{k > l} 
		\Schur_{\gamma_{k,l}}
			(\WW^{(k)}) \otimes \Schur_{\gamma_{k,l}}(\WW^{(l)}_*);
\end{align*}
where again the sum runs over all families of partitions $\boldsymbol \gamma =(
\gamma_{k,l})_{1 \leq l < k \leq n}$. We extend these families by setting 
$\gamma_{k,k} = \emptyset$, so $\gamma_{k,l}$ is defined for all $k \geq l$.

We now introduce the following notation: given a family of partitions 
$\boldsymbol \alpha =(\alpha_{k,l})_{1 \leq l \leq k \leq n}$ we set
$\boldsymbol \alpha_{k,\bullet} =(\alpha_{k,l})_{l \leq k \leq n}$ and 
$\boldsymbol \alpha_{\bullet,l} =(\alpha_{k,l})_{1 \leq l \leq k}$. By 
collecting all terms of the form $\Schur_{\eta_k}(\WW^{(k)}) \ot 
\Schur_{\nu_k}(\WW_*^{(k)})$ we obtain the following isomoprhism
of $\lie s_r \oplus \lie l[r]$-modules
\begin{align*}
\mathcal H &\cong
\bigoplus_{\boldsymbol{\alpha, \beta,\gamma,\eta,\nu}}
\bigotimes_{k=1}^n
c^{\eta_k}_{\boldsymbol \alpha_{k,\bullet}, \boldsymbol \gamma_{k,\bullet}} 
c^{\nu_k}_{\boldsymbol \beta_{\bullet,k}, \boldsymbol \gamma_{\bullet,k}}
Q(\boldsymbol \alpha_{k,\bullet}, \boldsymbol \beta_{\bullet,k}) \boxtimes
\Schur_{\eta_k}(\WW^{(k)}) \ot \Schur_{\nu_k}(\WW_*^{(k)})
\end{align*}
where the sum is taken over all collections of partitions and
\begin{align*}
Q(\boldsymbol \alpha_{k,\bullet}, \boldsymbol \beta_{\bullet,k})
	&= 
	\Schur_{\boldsymbol \alpha_{k,\bullet}}
		(A_1, A_2, \ldots, A_k) \ot \Schur_{\boldsymbol \beta_{\bullet,k}}
			(B_k, B_{k+1}, \ldots, B_n).
\end{align*}

Since this module is contained in $\SS^\bullet(\overline{\lie u}(r))$, it must
lie in $\tilde \TT_{\lie l[r]}$. It follows that each weight component 
intersects at most finitely many of these direct summands. Thus semisimple 
duality will commute with both the direct sum and the large tensor product. 
Furthermore, since each $Q(\boldsymbol \alpha_{k,\bullet}, \boldsymbol 
\beta_{\bullet,k})$ has finite dimensional components we have
\begin{align*}
\mathcal H^\vee 
	&\cong 
	\bigoplus_{\boldsymbol{\alpha, \beta,\gamma,\eta,\nu}}
\bigotimes_{k=1}^n
c^{\eta_k}_{\boldsymbol \alpha_{k,\bullet}, \boldsymbol \gamma_{k,\bullet}} 
c^{\nu_k}_{\boldsymbol \beta_{\bullet,k}, \boldsymbol \gamma_{\bullet,k}}
Q(\boldsymbol \alpha_{k,\bullet}, \boldsymbol \beta_{\bullet,k})^\vee \boxtimes
I(\boldsymbol \eta, \boldsymbol \nu)[r]^\vee.
\end{align*}
Now recall that $\Phi_r(I(\boldsymbol \eta, \boldsymbol \nu)[r]^\vee) \cong 
\ssHom_{\lie l[r]'}(I(\boldsymbol \eta, \boldsymbol \nu)[r], \CC) \cong
\delta_{\boldsymbol \eta, \boldsymbol \nu}$. Thus
\begin{align*}
\Phi_r(\mathcal H^\vee)
	&\cong 
	\bigoplus_{\boldsymbol{\alpha, \beta,\gamma,\eta}}
\bigotimes_{k=1}^n
c^{\eta_k}_{\boldsymbol \alpha_{k,\bullet}, \boldsymbol \gamma_{k,\bullet}} 
c^{\eta_k}_{\boldsymbol \beta_{\bullet,k}, \boldsymbol \gamma_{\bullet,k}}
Q(\boldsymbol \alpha_{k,\bullet}, \boldsymbol \beta_{\bullet,k})^\vee \\
	&\cong \left( \bigoplus_{\boldsymbol{\alpha, \beta,\gamma,\eta}}
\bigotimes_{k=1}^n
c^{\eta_k}_{\boldsymbol \alpha_{k,\bullet}, \boldsymbol \gamma_{k,\bullet}} 
c^{\eta_k}_{\boldsymbol \beta_{\bullet,k}, \boldsymbol \gamma_{\bullet,k}}
Q(\boldsymbol \alpha_{k,\bullet}, \boldsymbol \beta_{\bullet,k})\right)^\vee.
\end{align*}

Denote the module inside the semisimple dual by $Q$, and let us analyse each 
factor of the tensor product separately. When $k = 1$ the 
Littlewood-Richardson coefficient $c^{\eta_k}_{\boldsymbol 
\alpha_{k,\bullet}, \boldsymbol \gamma_{k,\bullet}}$ simplifies to 
$c^{\eta_1}_{\alpha_{1,1}, \emptyset}$, and the only way for this to be nonzero 
is that $\eta_1 = \alpha_{1,1}$. Thus the first factor in the tensor product has
the form
\begin{align*}
c^{\eta_1}_{\boldsymbol \beta_{\bullet,1}, \boldsymbol \gamma_{\bullet,1}}
	\Schur_{\eta_1}(A_1) \ot \Schur_{\boldsymbol \beta_{\bullet,1}}
		(B_1, B_{2}, \ldots, B_n) .
\end{align*}
We can sum all these factors over $\eta_1$ and $\beta_{l,1}$ for every $l \geq 
1$ since these do not appear in any other factors. Using Lemma 
\ref{lem:schur-identity} we obtain
\begin{align*}
\bigoplus_{\boldsymbol \beta_{\bullet,1}, \eta_1}
&c^{\eta_1}_{\boldsymbol \beta_{\bullet,1}, \boldsymbol \gamma_{\bullet,1}}
\Schur_{\eta_1}(A_1) \ot \Schur_{\boldsymbol \beta_{\bullet,1}}
		(B_1, B_{2}, \ldots, B_n) \\
	&\cong \bigoplus_{\boldsymbol \beta_{\bullet,1}}
	 	\Schur_{\boldsymbol \beta_{\bullet,1}}
		(A_1, A_{1}, \ldots, A_1)
	\otimes \Schur_{\boldsymbol \gamma_{\bullet,1}}
		(A_1, A_{1}, \ldots, A_1)\otimes
	\Schur_{\boldsymbol \beta_{\bullet,1}}
		(B_1, B_{2}, \ldots, B_n)  \\
	&\cong \SS^\bullet(A_1 \ot (B_1 \oplus B_2 \oplus \cdots \oplus B_n))
		\otimes \Schur_{\boldsymbol \gamma_{\bullet,1}}
		(A_1, A_{1}, \ldots, A_1).
\end{align*}
We see thus that $Q$ is isomorphic to
\begin{align*}
\SS^\bullet(A_1 \otimes (B_1 \oplus B_2 \oplus \cdots \oplus B_n))
\otimes \bigoplus_{\boldsymbol{\alpha, \beta,\gamma,\eta}}
\bigotimes_{k=2}^n
c^{\eta_k}_{\boldsymbol \alpha_{k,\bullet}, \boldsymbol \gamma_{k,\bullet}} 
c^{\eta_k}_{\boldsymbol \beta_{\bullet,k}, \boldsymbol \gamma_{\bullet,k}}
\Schur_{\gamma_{k,1}}(A_1)\otimes Q(\boldsymbol \alpha_{k,\bullet}, 
\boldsymbol \beta_{\bullet,k})
\end{align*}
where the sum is now restricted to the subsequences where $k \geq 2$. 

Let us now focus on the new factor corresponding to $k = 2$. We can again sum 
over $\eta_2$ and $\boldsymbol \beta_{\bullet,2}$ to get
\begin{align*}
\bigoplus_{\eta_2, \boldsymbol \beta_{\bullet,2}}
&c^{\eta_2}_{\boldsymbol \beta_{\bullet,2}, \boldsymbol \gamma_{\bullet,2}} 
c^{\eta_2}_{\alpha_{2,1}, \alpha_{2,2}, \gamma_{2,1}}
	\Schur_{\gamma_{k,1}}(A_1) \otimes \Schur_{\alpha_{2,1}, 
	\alpha_{2,2}}(A_1, A_2)\otimes 
		\Schur_{\boldsymbol \beta_{\bullet,2}}(B_2, \ldots, B_n)\\&\cong
\bigoplus_{\boldsymbol \beta_{\bullet,2}}
	 \Schur_{\boldsymbol \beta_{\bullet,2}}(\overline A_2)
		\otimes \Schur_{\boldsymbol \gamma_{\bullet,2}}(\overline A_2)
		\otimes	\Schur_{\boldsymbol \beta_{\bullet,2}}(B_2, \ldots, B_n)
		\\
&\cong \SS^\bullet(\overline A_2 \otimes (B_2 \oplus \cdots \oplus B_n))
\otimes \Schur_{\boldsymbol \gamma_{\bullet,2}}(\overline A_2).
\end{align*}
where $\overline A_2 = A_1 \oplus A_1 \oplus A_2$. After $n$ steps of this 
recursive process we obtain 
\begin{align*}
Q \cong 
\bigotimes_{k=1}^n \SS^\bullet(\overline A_k \otimes \underline B_k)
\cong \SS^\bullet \left(\bigoplus_{k=1}^n \overline A_k \otimes \underline B_k  
\right)
\end{align*}
where
\begin{align*}
\underline B_k &= B_k \oplus B_{k+1} \oplus \cdots \oplus B_n
\end{align*}
and
\begin{align*}
\overline A_k &= A_1 \oplus A_2 \oplus \cdots \oplus A_k \oplus
	\overline A_1 \oplus \overline A_2 \oplus \cdots \oplus \overline A_{k-1}\\
	&= 2^{k-1}A_1 \oplus 2^{k-2}A_2 \oplus \cdots \oplus A_k.
\end{align*}
Thus in the direct sum the term $A_k \otimes B_l$ appears $2^{l-k} + 2^{l-k-1}
+ \cdots + 2 + 1 = 2^{l-k+1}-1$ times. Now the sum of all the $A_k \otimes B_l$
with $l - k + 1= i$ fixed is the space $R_{i,r}$ introduced in the preamble of 
Lemma \ref{lem:W} and so $Q \cong \mathcal \SS^\bullet(\mathcal R_r)$. The 
naturality of Schur functors implies that this is an isomorphism of $\lie 
s_r$-modules. 

Now recall that $T_r(\lambda) = \ssCoind_{\overline{\lie b_r}}^{\lie g_r} 
Q^\vee \ot \CC_\lambda$. The $p$-th module of the $\psi$-filtration of 
$T_r(\lambda)$ is thus given by $\ssCoind_{\overline{\lie b_r}}^{\lie g_r} 
Q_{\geq p-\psi(\lambda)}^\vee \ot \CC_\lambda$, and by exactness of 
semisimple coiniduction on duals, the $p$-th layer of the filtration is 
isomorphic to $\ssCoind_{\overline{\lie b_r}}^{\lie g_r} Q_{p - 
\psi(\lambda)}^\vee \ot \CC_\lambda$, whose top layer is in turn isomorphic to 
$\ssCoind_{\overline{\lie b} \cap \lie s_r}^{\lie s_r} Q_{p-\psi(\lambda)}^\vee 
\ot \CC_\lambda$. Finally, the isomorphism we found before tells us that 
$Q_{\geq p-\psi(\lambda)}^\vee \ot \CC_\lambda \cong \SS^\bullet(\mathcal 
R^\vee)_{p-\psi(\lambda)}$ as $\lie s_r \cap \overline{\lie b}$-modules, so we 
are done.

\section{Category $\CAT{}{}$: injective objects and further categorical 
properties}
\label{s:injectives}

\subsection{Large annihilator coinduction}
In previous sections we have used inflation functors to turn objects in 
$\overline{\mathcal O}_{\lie s}$ into $\lie q$-modules and then used induction 
to construct objects in $\CAT{}{}$. In this section we will use a dual 
construction. Recall that $I_{\lie s}(\lambda)$ denotes the injective envelope
of $L_{\lie s}(\lambda)$ in $\overline{\mathcal O}_{\lie s}$.
\begin{defn}
We denote by $\mathcal C: \overline{\mathcal O}_{\lie s} \to \Mod{(\lie g, 
\lie h)}$ the functor $\mathcal C = \Phi \circ \ssCoind_{\overline{\lie 
q}}^{\lie g} \circ \mathcal I_{\lie s}^{\overline{\lie q}}$. For each 
$\lambda \in \lie h^\circ$ we set $I(\lambda) = \mathcal C(I_{\lie 
s}(\lambda))$.
\end{defn}
Notice that $\mathcal C$ does not automatically fall in $\CAT{}{}$. However 
from the definition and Proposition \ref{prop:ss-dual-coind} it follows
that $\mathcal C(M(\lambda)^\vee) = A(\lambda)$, so standard objects from the
category $\overline{\mathcal O}_{\lie s}$ are sent to standard objects of 
$\CAT{}{}$. We will show that the same holds for injective modules, and that
in fact the standard filtrations of injective modules in $\overline{\mathcal 
O}_{\lie s}$ also lift to standard filtrations in $\CAT{}{}$.

\subsection{Injective objects}
We now begin our study of injective envelopes in $\CAT{}{}$. Set $J(\lambda) = 
\ssCoind_{\overline{\lie q}}^{\lie g} \mathcal I_{\lie s}^{\overline{\lie q}} 
I_{\lie s}(\lambda)$. We will show that, just as for standard modules, we have
$I(\lambda) = \Phi(J(\lambda)) = \LAprojector(J(\lambda))$, and that this is
the injective envelope of the simple module $L(\lambda)$.

\begin{lem}
\label{lem:coind-inj}
Let $\lambda \in \lie h^\circ$ and let $I_{\lie s}(\lambda)$ be the injective 
envelope of $L_{\lie s}(\lambda)$ in $\overline{\mathcal O}_{\lie s}$. Then 
for any object $M$ in $\CAT{}{}$ we have $\Ext_{\lie g, \lie h}^1 (M, J(\lambda)
) = 0$.
\end{lem}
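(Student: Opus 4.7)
The plan is to combine the adjunctions $(\Res^{\lie g}_{\overline{\lie q}}, \ssCoind_{\overline{\lie q}}^{\lie g})$ and $((-)/\overline{\lie m}(-), \mathcal I_{\lie s}^{\overline{\lie q}})$, which compose to give the natural isomorphism
\begin{align*}
\Hom_{\lie g, \lie h}(N, J(\lambda))
  \;\cong\; \Hom_{\overline{\lie q}, \lie h}\bigl(N, \mathcal I_{\lie s}^{\overline{\lie q}} I_{\lie s}(\lambda)\bigr)
  \;\cong\; \Hom_{\lie s, \lie h}\bigl(N/\overline{\lie m} N, I_{\lie s}(\lambda)\bigr),
\end{align*}
where the restriction to $\overline{\lie q}$ is implicit. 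The Grothendieck spectral sequence for the composite $\Hom_{\lie g, \lie h}(M, -) \circ \ssCoind_{\overline{\lie q}}^{\lie g}$ produces a natural injection $\Ext^1_{\lie g, \lie h}(M, J(\lambda)) \hookrightarrow \Ext^1_{\overline{\lie q}, \lie h}(M, \mathcal I I_{\lie s}(\lambda))$, so it suffices to kill the right-hand side.

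To handle this, I would apply the Lyndon–Hochschild–Serre spectral sequence for the Lie-algebra extension $\overline{\lie m} \subset \overline{\lie q} \twoheadrightarrow \lie s$, noting that $\overline{\lie m}$ acts trivially on $\mathcal I I_{\lie s}(\lambda)$:
\begin{align*}
E_2^{p,q} \;=\; \Ext^p_{\lie s, \lie h}\bigl(H_q(\overline{\lie m}, M), I_{\lie s}(\lambda)\bigr)
  \;\Longrightarrow\; \Ext^{p+q}_{\overline{\lie q}, \lie h}\bigl(M, \mathcal I I_{\lie s}(\lambda)\bigr).
\end{align*}
The resulting five-term exact sequence shows $\Ext^1_{\overline{\lie q}, \lie h}(M, \mathcal I I_{\lie s}(\lambda)) = 0$ as soon as $E_2^{1,0}$ and $E_2^{0,1}$ both vanish. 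The first equals $\Ext^1_{\lie s, \lie h}(M/\overline{\lie m} M, I_{\lie s}(\lambda))$, which vanishes by the injectivity of $I_{\lie s}(\lambda)$ in the Serre subcategory $\overline{\mathcal O}_{\lie s}$ of $\Mod{(\lie s, \lie h)}$ (Theorem \ref{thm:inj-filtration}), provided $M/\overline{\lie m} M$ is an object of $\overline{\mathcal O}_{\lie s}$. The second equals $\Hom_{\lie s, \lie h}(H_1(\overline{\lie m}, M), I_{\lie s}(\lambda))$, and I would argue it vanishes by a support/socle argument: any nonzero map into $I_{\lie s}(\lambda)$ must hit its socle $L_{\lie s}(\lambda)$, whereas every simple constituent of $H_1(\overline{\lie m}, M)$ has highest weight strictly below the extremal weights of $M$ by an amount dictated by a positive infinite root, ruling out an embedding of $L_{\lie s}(\lambda)$.

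The main obstacle is to establish the memberships $M/\overline{\lie m} M, H_1(\overline{\lie m}, M) \in \overline{\mathcal O}_{\lie s}$, particularly the finite-dimensionality of weight spaces, since weight spaces of a general $M \in \CAT{}{}$ are typically infinite-dimensional. This is where the LAC with respect to $\lie l \subset \lie s$ is decisive: every $m \in M$ is annihilated by $[\lie l[r]', \lie l[r]']$ for some $r \gg 0$, and a PBW-style decomposition of the Chevalley–Eilenberg complex $\Lambda^\bullet \overline{\lie m} \otimes M$, paralleling the analysis in the proof of Lemma \ref{lem:ola-generator}, expresses each weight space of $H_q(\overline{\lie m}, M)$ as a subquotient of finitely many finite-dimensional $\lie l[r]$-modules obtained from the tensor of $\Lambda^\bullet \overline{\lie m}$ with the $\lie l[r]'$-fixed part of $M$. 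The $\lie n_{\lie s}$-torsion and $\lie h$-semisimplicity conditions are inherited from $M$ via $\lie n_{\lie s} \subset \lie n$, so once the weight-space bound is established the two memberships follow.
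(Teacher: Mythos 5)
Your skeleton (adjunction followed by two spectral sequences) is reasonable, but the way you dispose of the two obstruction terms has a genuine gap, and it sits exactly where the paper's key structural input should enter. The term $E_2^{0,1} = \Hom_{\lie s, \lie h}(H_1(\overline{\lie m}, M), I_{\lie s}(\lambda))$ cannot be killed by the support/socle comparison you sketch: $\lambda$ is a fixed but arbitrary eligible weight and $M$ is an arbitrary object of $\CAT{}{}$, so there is no relation whatsoever between $\lambda$ and the extremal weights of $M$; nothing in your argument prevents $L_{\lie s}(\lambda)$ from occurring in $H_1(\overline{\lie m},M)$ if that module were nonzero (take $M$ supported near $\lambda+\alpha$ with $\alpha$ a positive infinite root). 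The term does vanish, but for a much stronger and entirely different reason: by Proposition \ref{prop:hwm-properties}(iii) every highest weight module in $\CAT{}{}$ is induced from $\lie q$, hence free as a $U(\overline{\lie m})$-module; by Proposition \ref{prop:fg-fl} every object of $\CAT{}{}$ is a direct limit of modules with finite filtrations by such, hence flat over $U(\overline{\lie m})$, so $H_1(\overline{\lie m},M)=0$ outright. This $U(\overline{\lie m})$-freeness is the heart of the paper's proof and is absent from yours.

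Your treatment of $E_2^{1,0}$ has a related problem: it requires $M/\overline{\lie m}M$ to lie in $\overline{\mathcal O}_{\lie s}$, and for a general object of $\CAT{}{}$ this fails, since weight spaces of $M$, and hence of $M/\overline{\lie m}M$, can be infinite dimensional (already for an infinite direct sum of simples with overlapping supports), so the finite-dimensionality axiom of $\overline{\mathcal O}_{\lie s}$ is violated and the injectivity of $I_{\lie s}(\lambda)$ cannot be invoked; the LAC does not repair this, as it constrains which weights occur, not their multiplicities. The paper sidesteps both issues by first reducing to $M = L(\mu)$ simple, where the isomorphism $L(\mu)|_{\overline{\lie q}} \cong \Ind_{\lie s}^{\overline{\lie q}} L_{\lie s}(\mu)$ makes the adjunction exact in degree one and collapses everything to $\Ext^1_{\lie s,\lie h}(L_{\lie s}(\mu), I_{\lie s}(\lambda))$, which vanishes because $\overline{\mathcal O}_{\lie s}$ is extension-closed in $\Mod{(\lie s,\lie h)}$. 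You should restructure your argument to make that reduction first and to replace the socle argument by the $U(\overline{\lie m})$-freeness of $L(\mu)$.
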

\begin{proof}
It is enough to prove the result for $M = L(\mu)$ with $\mu$ an eligible 
weight. It follows from Proposition \ref{prop:hwm-properties} that $L(\mu) \cong
\Ind_{\lie s}^{\overline{\lie q}} L_{\lie s}(\lambda)$ as $\overline{\lie 
q}$-module. Since $I_{\lie s}(\lambda)$ is an injective object it is acyclic 
for the functor $\ssCoind_{\overline{\lie q}^{\lie g}} \circ \mathcal I_{\lie 
s}^{\overline{\lie q}}$ so by standard homological algebra
\begin{align*}
\Ext_{\lie g, \lie h}^1(L(\mu), J(\lambda))
	&\cong \Ext_{\lie s, \lie h}^1(U(\lie s) \ot_{\overline{\lie q}} L(\mu),
		I_{\lie s}(\lambda))
	= \Ext_{\lie s, \lie h}^1(L_{\lie s}(\lambda), I_{\lie s}(\lambda)).
\end{align*}
Since $\overline{\mathcal O}_{\lie s}$ is closed under semisimple extensions,
this last $\Ext$-space must be $0$.
\end{proof}

The class of $\Phi$-acyclic modules is closed by extensions, and so is the 
subclass of $\Phi$-acyclic modules $M$ such that $\Phi(M) = \LAprojector(M)$. 
The following lemma states that any module with a filtration by dual Verma
modules lies in this class.

\begin{lem}
\label{lem:phi-pi}
Let $M$ be a weight $\lie g$-module. Suppose $M$ has a finite filtration 
$F_1M \subset F_2M \subset \cdots F_nM$ such that its $i$-th layer is 
isomorphic to $M(\lambda_i)^\vee$ for $\lambda_i \in \lie h^\circ$. Then 
$\Phi(M) = \LAprojector(M)$, and $\LAprojector(F_i M)/\LAprojector(F_{i-1}M)
\cong A(\lambda_i)$.
\end{lem}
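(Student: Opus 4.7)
My approach is induction on the length $n$ of the filtration, reducing everything to the single-layer case and then leveraging the extension-closure statement that the excerpt records immediately before the lemma: the class of $\Phi$-acyclic modules $N$ satisfying $\Phi(N)=\LAprojector(N)$ is closed under extensions. The base case $n=1$, i.e.\ $M = M(\lambda)^\vee$, is immediate from Proposition \ref{prop:a-in-ola}: $\Phi(M(\lambda)^\vee) = A(\lambda)$ lies in $\CAT{}{}$, hence is already $\lie n$-torsion, so the outer $\Gamma_{\lie n}$ in $\LAprojector = \Gamma_{\lie n}\circ \Phi$ is redundant, giving $\Phi(M(\lambda)^\vee) = A(\lambda) = \LAprojector(M(\lambda)^\vee)$.

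For the inductive step I would iterate through the short exact sequences
\begin{align*}
0 \to F_{i-1}M \to F_iM \to M(\lambda_i)^\vee \to 0.
\end{align*}
Assuming $F_{i-1}M$ and $M(\lambda_i)^\vee$ both belong to the class of $\Phi$-acyclic modules on which $\Phi$ agrees with $\LAprojector$, the extension-closure gives the same for $F_iM$. Moreover, $\Phi$-acyclicity of $F_{i-1}M$ ensures that applying $\Phi$ to the sequence yields a short exact sequence, and then combining this with $\Phi = \LAprojector$ on each term produces
\begin{align*}
0 \to \LAprojector(F_{i-1}M) \to \LAprojector(F_iM) \to A(\lambda_i) \to 0.
\end{align*}
Specialising to $i=n$ gives the first conclusion of the lemma, and the collection of these exact sequences is exactly the filtration required by the second conclusion.

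The genuine work is therefore concentrated in the base case verification that $M(\lambda)^\vee$ is $\Phi$-acyclic; this I expect to be the main obstacle. Using Proposition \ref{prop:ss-dual-coind} to rewrite $M(\lambda)^\vee \cong \ssCoind_{\overline{\lie b}}^{\lie g}\CC_\lambda$, and the presentation $\Phi \cong \bigoplus_\mu \varinjlim_r \Hom_{\lie g}(U(\lie g)\otimes_{\lie l[r]^+} \CC_\mu, -)$ combined with exactness of direct limits, one has
\begin{align*}
R^i\Phi(M(\lambda)^\vee) \cong \bigoplus_{\mu}\varinjlim_r \Ext^i_{\overline{\lie b},\lie h}\bigl(U(\lie g)\otimes_{\lie l[r]^+}\CC_\mu,\,\CC_\lambda\bigr)
\end{align*}
via the semisimple-coinduction adjunction. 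The vanishing for $i\geq 1$ can then be argued from a PBW decomposition of $U(\lie g)\otimes_{\lie l[r]^+}\CC_\mu$ as $\overline{\lie b}$-module together with the observation that $\CC_\lambda$ has trivial higher $\Ext$ against $\overline{\lie b}$-weight modules whose nonzero $\lambda$-weight space lies in the image of a free cover by $U(\overline{\lie n})$-modules.

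Once this acyclicity is in hand, the induction described above produces both statements, so the only routine bookkeeping left is to identify the cokernel map $\LAprojector(F_iM) \twoheadrightarrow A(\lambda_i)$ with the one obtained by applying $\LAprojector$ to the original quotient map $F_iM \twoheadrightarrow M(\lambda_i)^\vee$; this follows from functoriality of $\LAprojector$.
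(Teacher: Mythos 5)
Your overall architecture --- reduce to the single layer $M(\lambda)^\vee$ and then propagate through the filtration using the extension-closure of the class of $\Phi$-acyclic modules on which $\Phi = \LAprojector$ --- is exactly the paper's, and your base-case identification $\Phi(M(\lambda)^\vee) = A(\lambda) = \LAprojector(M(\lambda)^\vee)$ via Proposition \ref{prop:a-in-ola} is fine. The gap is in the step you yourself flag as the genuine work: the vanishing of $R^1\Phi(M(\lambda)^\vee)$. You reduce this to $\Ext^1_{\overline{\lie b},\lie h}(U(\lie g)\ot_{\lie l[r]^+}\CC_\mu,\CC_\lambda)=0$ and assert it follows from a PBW decomposition plus an unspecified acyclicity ``observation''. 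But $U(\lie g)\ot_{\lie l[r]^+}\CC_\mu$ is \emph{not} free (nor induced from $\lie h$) as a $U(\overline{\lie n})$-module: its generator $1\ot 1_\mu$ is annihilated by $\overline{\lie n}\cap\lie l[r]$, so the cyclic $\overline{\lie b}$-submodule it generates is parabolically induced from $\overline{\lie b}\cap\lie l[r]^+$, and such modules can have nonzero $\Ext^1_{\overline{\lie b},\lie h}(-,\CC_\nu)$ (controlled by $H^1(\overline{\lie n}\cap\lie l[r],\CC)$). Ruling out these contributions requires using eligibility of $\lambda$ and $\mu$ and controlling the entire infinite $\overline{\lie b}$-filtration of the restricted module; none of this is in your sketch. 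In addition, promoting the coinduction adjunction to an isomorphism of $\Ext^i$-groups needs $\CC_\lambda$ to be acyclic for $\ssCoind_{\overline{\lie b}}^{\lie g}$, which you do not address (for $i=1$ the five-term sequence at least gives an injection in the direction you need, but you should say so).

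The paper takes a different, and essential, reduction: it rewrites $M(\lambda)^\vee$ as $\ssCoind_{\overline{\lie p}(r)}^{\lie g}$ of the inflation of $M_1^\vee\boxtimes\cdots\boxtimes M_n^\vee\ot\CC_{\lambda_r}$, where each $M_i$ is a Verma module over a $\gl(\infty)$-factor of $\lie l[r]$. The adjunction then lands in $\Ext^1_{\lie l[r],\lie h}(\SS^\bullet(\lie u(r))\ot\CC_{\mu-\lambda},\,M_1^\vee\boxtimes\cdots\boxtimes M_n^\vee)$, and the vanishing rests on the nontrivial fact that each $M_i^\vee$ is injective in the corresponding category $\overline{\mathcal O}$ (its highest weight $\chi_i\omega^{(i)}[r]$ being dominant), via Lemma \ref{lem:coind-inj}. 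This injectivity input is the substance of the proof and has no counterpart in your outline, so as written the proposal does not establish the lemma. (Your treatment of the $\Gamma_{\lie n}$-part --- replacing the paper's $\Gamma_{\lie n}$-acyclicity argument by the remark that $A(\lambda)$ is already $\lie n$-torsion plus extension-closure --- is acceptable by comparison, since the paper itself asserts the extension-closure you invoke.)
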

\begin{proof}
The proof reduces to showing that $M(\lambda)^\vee$ is acyclic for both
$\Phi$ and $\Gamma_{\lie n}$. 

Recall that $\Phi$ can be written as the direct limit of functors $\Phi_r$
taking $\lie l[r]'$-invariant submodules, and that there are natural 
isomorphisms
\begin{align*}
\Phi_r &\cong 
	\bigoplus_\mu \Hom_{\lie g, \lie h}(U(\lie g) \ot_{\lie l[r]^+} 
		\CC_\mu, -)
\end{align*}
where the sum is taken over all $r$-eligible weights $\mu \in \lie h^\circ$. 
Since derived functors commute with direct limits, and in particular with 
direct sums, it is enough to show that if $\lambda$ and $\mu$ are $r$-eligible
then $\Ext^1_{\lie g, \lie h}(U(\lie g) \ot_{\lie l[r]^+} \CC_\mu, 
M(\lambda)^\vee) = 0$. 

Notice that the result is trivial if $\lambda$ and $\mu$ have different levels,
so we may assume that both have level $\chi = \chi_1 \omega^{(1)} + \cdots + 
\chi_n \omega^{(n)}$ and so $\lambda = \lambda_r + \chi$ and $\mu = \mu|_r + 
\chi$. We can rewrite $M(\lambda)^\vee$ as follows
\begin{align*}
M(\lambda)^\vee
 	&\cong \ssCoind_{\overline{\lie p}(r)}^{\lie g}
 		\mathcal I^{\overline{\lie p}(r)}_{\lie l[r]^+} 
 		\ssCoind_{\overline{\lie b} \cap \lie l[r]^+}^{\lie l[r]^+}
 			(\CC_{\chi_1} \boxtimes \cdots \boxtimes \CC_{\chi_n}) \ot 
 				\CC_{\lambda_r}\\
 	&\cong \ssCoind_{\overline{\lie p}(r)}^{\lie g}
 		\mathcal I^{\overline{\lie p}(r)}_{\lie l[r]^+} (M_1^\vee
 		 			\boxtimes \cdots \boxtimes M_n^\vee) \otimes 
 		 				\CC_{\lambda_r}
\end{align*}
where $M_i = M_{\lie l[r]}(\chi_i \omega^{(i)}[r])$.
Using that inflation functors are exact and semisimple coinduction is 
exact on semisimple duals, standard homological algebra implies
\begin{align*}
\Ext^1_{\lie g, \lie h}&(U(\lie g) \ot_{\lie l[r]^+} \CC_\mu, M(\lambda)^\vee) \\
	&\cong \Ext_{\lie l[r]^+, \lie h}^1(U(\lie l[r])^+ 
		\ot_{\overline{\lie p}(r)}
		U(\lie g) \ot_{\lie l[r]^+} \CC_\mu, 
		M_1^\vee \boxtimes \cdots \boxtimes M_n^\vee \otimes 
 		 				\CC_{\lambda_r}) \\
 	&\cong \Ext_{\lie l[r], \lie h}^1
 		(\SS^\bullet(\lie u(r)) \ot \CC_{\mu - \lambda}, 
 			M_1^\vee \boxtimes \cdots \boxtimes M_n^\vee).
\end{align*}
The left hand side of this $\Ext$-space decomposes as a direct sum of tensor
$\lie l[r]$-modules, so the $\Ext$-space decomposes as a tensor product
of spaces of the form 
\begin{align*}
\Ext_{\lie g(\VV^{(k)}[r]), \lie h^{(k)}}(T_k, M_k^\vee)
\end{align*}
with $T_k$ a tensor module, and hence in $\CAT{\lie l[r]}{\lie l[r]}$. Now 
each $M_i^\vee$ is injective in the corresponding category $\overline{\mathcal 
O}$, since it is the dual Verma module corresponding to a weight that is 
maximal in its dot-orbit. By Lemma \ref{lem:coind-inj} each of these 
$\Ext$-spaces is $0$, and hence $M(\lambda)^\vee$ is $\Phi$-acyclic. 

Now take $J_r$ to be the left ideal of $U(\lie g)$ generated by $\lie n^r$. 
There exists a natural isomorphism
\begin{align*}
\Gamma_{\lie n} 
	\cong \varinjlim \bigoplus_{\lambda \in \lie h^\circ}
		\Hom_{\lie g, \lie h}(U(\lie g)/J_r \ot_{\lie h} \CC_\lambda, -),
\end{align*}
so a similar argument works in this case. Finally since $\LAprojector = 
\Gamma_{\lie n} \circ \Phi$, we are done.
\end{proof}

We are now ready to prove the main result of this section.
\begin{thm}
\label{thm:injectives}
Let $\lambda \in \lie h^\circ$. The module $I(\lambda)$ lies in $\CAT{}{}$ and
it is an injective envelope for $L(\lambda)$. Furthermore it has a finite 
filtration whose first layer is $A(\lambda)$ and whose higher layers are 
isomorphic to $A(\mu)$ with $\mu >_\fin \lambda$.
\end{thm}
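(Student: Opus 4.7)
The plan is to lift Nampaisarn's dual Verma filtration of $I_{\lie s}(\lambda)$ through the composite functor $\mathcal C = \Phi \circ G$ where $G = \ssCoind_{\overline{\lie q}}^{\lie g} \circ \mathcal I_{\lie s}^{\overline{\lie q}}$. By Theorem \ref{thm:inj-filtration} the module $I_{\lie s}(\lambda)$ carries a finite filtration $0 = F_0 \subset F_1 \subset \cdots \subset F_m = I_{\lie s}(\lambda)$ with $F_k/F_{k-1} \cong M_{\lie s}(\lambda^{(k)})^\vee$, where $\lambda^{(0)} = \lambda$ and $\lambda^{(k)} >_\fin \lambda$ for $k \geq 1$. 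The first step is to identify $G(M_{\lie s}(\mu)^\vee)$ with $M(\mu)^\vee$: Proposition \ref{prop:ss-dual-coind} gives $M_{\lie s}(\mu)^\vee \cong \ssCoind_{\overline{\lie b} \cap \lie s}^{\lie s} \CC_\mu$, and since $\overline{\lie m}$ acts trivially on $\CC_\mu$, a direct PBW calculation using $\overline{\lie q} = \lie s \niplus \overline{\lie m}$ and $\overline{\lie b} = (\overline{\lie b} \cap \lie s) \niplus \overline{\lie m}$ identifies $\mathcal I_{\lie s}^{\overline{\lie q}} M_{\lie s}(\mu)^\vee$ with $\ssCoind_{\overline{\lie b}}^{\overline{\lie q}} \CC_\mu$; transitivity of semisimple coinduction then yields $G(M_{\lie s}(\mu)^\vee) \cong \ssCoind_{\overline{\lie b}}^{\lie g} \CC_\mu = M(\mu)^\vee$.

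Next I would propagate this through the whole filtration. Exactness of $\mathcal I_{\lie s}^{\overline{\lie q}}$ transfers the filtration of $I_{\lie s}(\lambda)$ to one of $\mathcal I_{\lie s}^{\overline{\lie q}} I_{\lie s}(\lambda)$ whose layers are $\ssCoind_{\overline{\lie b}}^{\overline{\lie q}} \CC_{\lambda^{(k)}} \cong (\Ind_{\lie b}^{\lie q} \CC_{\lambda^{(k)}})^\vee$, that is, semisimple duals. Proposition \ref{prop:ss-dual-coind} makes $\ssCoind_{\overline{\lie q}}^{\lie g}$ exact on these, and since extensions of $\ssCoind_{\overline{\lie q}}^{\lie g}$-acyclic modules are again acyclic (by the long exact sequence), an induction on $k$ produces a finite filtration of $J(\lambda) = G(I_{\lie s}(\lambda))$ whose layers are the dual Vermas $M(\lambda^{(k)})^\vee$. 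Lemma \ref{lem:phi-pi} then applies: $I(\lambda) = \Phi(J(\lambda)) = \LAprojector(J(\lambda))$ inherits a finite filtration whose layers are the standard modules $A(\lambda^{(k)})$. The bottom layer is $A(\lambda^{(0)}) = A(\lambda)$, the higher ones are $A(\lambda^{(k)})$ with $\lambda^{(k)} >_\fin \lambda$, and in particular $I(\lambda) \in \CAT{}{}$.

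For injectivity in $\CAT{}{}$ I would exploit that $\LAprojector$ is right adjoint to the inclusion $\CAT{}{} \hookrightarrow \Mod{(\lie g, \lie h)}$. Given a short exact sequence $0 \to M \to N \to Q \to 0$ in $\CAT{}{}$, a map $M \to I(\lambda) = \LAprojector(J(\lambda))$ corresponds by adjunction to a map $M \to J(\lambda)$ in $\Mod{(\lie g, \lie h)}$; the obstruction to extending it to $N$ is $\Ext^1_{\lie g, \lie h}(Q, J(\lambda))$, which vanishes by Lemma \ref{lem:coind-inj}. To identify $I(\lambda)$ as the injective envelope of $L(\lambda)$ it suffices to show $\Hom_{\CAT{}{}}(L(\mu), I(\lambda)) = \delta_{\lambda,\mu} \CC$ for every eligible $\mu$. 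Chaining the adjunctions attached to $\LAprojector$, $\ssCoind_{\overline{\lie q}}^{\lie g}$, and $\mathcal I_{\lie s}^{\overline{\lie q}}$, this Hom-space reduces to $\Hom_{\overline{\mathcal O}_{\lie s}}(L(\mu)/\overline{\lie m}L(\mu), I_{\lie s}(\lambda))$; Proposition \ref{prop:hwm-properties}(iii) together with PBW gives $L(\mu) \cong U(\overline{\lie m}) \otimes L_{\lie s}(\mu)$ as a vector space, so the coinvariant quotient equals $L_{\lie s}(\mu)$ and the Hom-space is $\delta_{\lambda,\mu}\CC$, as required.

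The main technical obstacle is the inductive exactness argument in the second paragraph. Proposition \ref{prop:ss-dual-coind} only delivers exactness of $\ssCoind_{\overline{\lie q}}^{\lie g}$ on the image of the semisimple dual functor, not on iterated extensions. One must carefully verify that acyclicity for this functor is preserved under short exact sequences in the relevant class of inflated objects, so that each partial term $\mathcal I_{\lie s}^{\overline{\lie q}} F_k$ remains acyclic as the induction proceeds and the transferred filtration of $J(\lambda)$ by dual Vermas is actually obtained.
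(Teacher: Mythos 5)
Your plan follows the paper's proof essentially verbatim: transfer Nampaisarn's dual Verma filtration of $I_{\lie s}(\lambda)$ through inflation and semisimple coinduction to obtain a filtration of $J(\lambda)$ by the $M(\lambda^{(k)})^\vee$, apply Lemma \ref{lem:phi-pi} to get the standard filtration of $I(\lambda) = \LAprojector(J(\lambda))$, and deduce injectivity and the simple socle from the $\Ext$-isomorphisms of Lemma \ref{lem:coind-inj}. The only difference is that you make explicit the bookkeeping the paper compresses into ``semisimple coinduction is exact over semisimple duals,'' namely the identification $G(M_{\lie s}(\mu)^\vee) \cong M(\mu)^\vee$ and the closure of acyclic objects under extensions needed to run the induction along the filtration.
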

\begin{proof}
Since inflation is exact and semisimple coinduction is exact over semisimple
duals, the costandard filtration of $I_{\lie s}(\lambda)$ induces a finite 
filtration over $J(\lambda)$, whose first layer is isomorphic to 
$M(\lambda)^\vee$ and whose higher layers are isomorphic to $M(\mu)^\vee$ for 
$\mu \succeq \lambda$.
Applying Lemma \ref{lem:phi-pi} it follows that $I(\lambda) = \Phi(J(\lambda))
= \LAprojector(J(\lambda))$ and hence belongs to $\CAT{}{}$, and that this
module has the required filtration. Finally, reasoning as in the proof of Lemma 
\ref{lem:coind-inj} we have isomorphisms
\begin{align*}
\Ext_{\CAT{}{}}^i(L(\mu), I(\lambda)) \cong
\Ext_{\lie g, \lie h}^i(L(\mu), J(\lambda))
	&\cong \Ext_{\lie s, \lie h}^i(L_{\lie s}(\mu), I_{\lie s}(\lambda)).
\end{align*}
Taking $i = 0$ and varying $\mu$ we see that $L(\lambda)$ is the socle of 
$I(\lambda)$, and taking $i = 1$ it follows that $I(\lambda)$ is injective in 
$\CAT{}{}$.
\end{proof}

\subsection{Highest weight structure and blocks}
We are now ready to prove the main result of this paper, namely that $\CAT{}{}$
is a highest weight category in the sense of Cline, Parshall and Scott 
\cite{CPS88}.
\begin{thm}
Category $\CAT{}{}$ is a highest weight category with indexing set 
$(\lie h^\circ, <_\order)$. Simple modules are simple highest weight modules 
$L(\lambda)$, the family of standard objects $A(\lambda)$ have infinite length,
and the corresponding injective envelopes $I(\lambda)$ have finite standard 
filtrations.
\end{thm}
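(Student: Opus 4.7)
The plan is to verify the Cline--Parshall--Scott axioms for a highest weight category; essentially every ingredient has been established in Sections \ref{s:ola}--\ref{s:injectives}, so the proof reduces to cross-referencing the earlier results and performing one small compatibility check between $<_\fin$ and $<_\order$.

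First I would note that $\CAT{}{}$ is a locally artinian abelian category with direct limits, enough injective objects, and the Grothendieck property by the theorem at the end of Section \ref{s:ola}. The poset $(\lie h^\circ, <_\order)$ is interval finite by Lemma \ref{lem:order}, and Theorem \ref{thm:simples} identifies the simples as $\{L(\lambda)\}_{\lambda \in \lie h^\circ}$, matching the proposed indexing set.

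For the axiom on standard objects I would invoke Theorem \ref{thm:standard}\eqref{i:pre-a1} to get $\soc A(\lambda) \cong L(\lambda)$. The multiplicity $[A(\lambda) : L(\lambda)] = 1$ follows from Theorem \ref{thm:standard-mults} by taking $\nu = 0$, which lies in the support of $\SS^\bullet(\mathcal R^\vee)$ via its degree-zero summand, so that $m(\lambda, \lambda) \dim \SS^0(\mathcal R^\vee) = 1$. Any other composition factor $L(\mu)$ of $A(\lambda)$ must satisfy both $\mu = \sigma \cdot (\lambda + \nu)$ for some $\sigma \in \mathcal W(\lie s)$ and $\nu$ in the support of $\SS^\bullet(\mathcal R^\vee)$, by Theorem \ref{thm:standard-mults}, and $\mu \prec \lambda$, by Theorem \ref{thm:standard}\eqref{i:pre-a2}; combining these gives precisely $\mu <_\order \lambda$. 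The claim that $A(\lambda)$ has infinite length is then immediate from Theorem \ref{thm:standard-mults}, since $\dim \SS^\bullet(\mathcal R^\vee) = \infty$.

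For the injective filtration axiom, Theorem \ref{thm:injectives} provides $I(\lambda)$ as the injective envelope of $L(\lambda)$ in $\CAT{}{}$ together with a finite filtration whose bottom layer is $A(\lambda)$ and whose higher layers are $A(\mu)$ with $\mu >_\fin \lambda$. The only remaining task is to show that $\mu >_\fin \lambda$ implies $\mu >_\order \lambda$: unpacking the definitions, $\lambda <_\fin \mu$ says $\lambda = \sigma \cdot \mu$ for some $\sigma \in \mathcal W(\lie s)$ with $\lambda \preceq \mu$, and choosing $\nu = 0$ in the support of $\SS^\bullet(\mathcal R^\vee)$ yields $\lambda = \sigma \cdot (\mu + \nu) \preceq \mu$, which is precisely the condition $\lambda <_\order \mu$. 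This little compatibility is the only step not already quoted verbatim from an earlier result, and it presents no serious obstacle.
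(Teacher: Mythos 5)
Your proposal is correct and follows essentially the same route as the paper, which likewise proves the theorem by cross-referencing Lemma \ref{lem:order}, Theorem \ref{thm:simples}, Theorem \ref{thm:standard}, Theorem \ref{thm:standard-mults} and Theorem \ref{thm:injectives}. The one point where you go beyond the paper's (very terse) argument is the explicit check that $\mu >_\fin \lambda$ implies $\mu >_\order \lambda$ via $\nu = 0$ in the support of $\SS^\bullet(\mathcal R^\vee)$ — a compatibility the paper leaves implicit, and which you verify correctly.
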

\begin{proof}
We only need to refer to previously proved results. We have just seen in Lemma 
\ref{lem:order} that $<_\order$ is an interval-finite order. The fact that 
$\CAT{}{}$ is locally artinian is immediate from \ref{prop:fg-fl}. That 
eligible weights parametrise simple modules was proved in Theorem 
\ref{thm:simples}, and standard modules exist and have the desired properties 
by Theorem \ref{thm:standard}. Finally injective modules exist and have finite 
standard filtrations with the required properties by Theorem 
\ref{thm:injectives}.
\end{proof}

A consequence of the previous results is that $M_{\lie s}(\lambda)^\vee$ is 
$\mathcal C$-acyclic, and standard filtrations of $I(\lambda)$ arise as the 
image by $\mathcal C$ of a standard filtration of $I_{\lie s}(\lambda)$. As a 
consequence we get the following analogue of BGG-reciprocity in $\CAT{}{}$.
\begin{cor}
Let $\lambda, \mu \in \lie h^\circ$. The multiplicity of $A(\mu)$ in any 
standard filtration of $I(\lambda)$ equals $m(\lambda, \mu)$.
\end{cor}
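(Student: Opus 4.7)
The approach is to lift the BGG reciprocity of Theorem \ref{thm:inj-filtration}, which lives in $\overline{\mathcal O}_{\lie s}$, up to $\CAT{}{}$ via the functor $\mathcal C$, making essential use of the construction of $I(\lambda)$ from Theorem \ref{thm:injectives} and the behaviour of $\LAprojector$ on dual-Verma-filtered modules recorded in Lemma \ref{lem:phi-pi}.

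First I would invoke Theorem \ref{thm:inj-filtration} to fix a filtration of $I_{\lie s}(\lambda)$ in $\overline{\mathcal O}_{\lie s}$ whose layers are of the form $M_{\lie s}(\mu)^\vee$ and in which $M_{\lie s}(\mu)^\vee$ occurs exactly $m(\lambda,\mu)$ times. Next I would push this filtration through $\mathcal I_{\lie s}^{\overline{\lie q}}$, which is exact, and then through $\ssCoind_{\overline{\lie q}}^{\lie g}$, which is exact on semisimple duals by Proposition \ref{prop:ss-dual-coind}. The result is a finite filtration of $J(\lambda) = \ssCoind_{\overline{\lie q}}^{\lie g}\mathcal I_{\lie s}^{\overline{\lie q}} I_{\lie s}(\lambda)$ whose layers are the dual Vermas $M(\mu)^\vee$, each appearing $m(\lambda,\mu)$ times. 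Finally I would apply $\LAprojector$ and appeal to Lemma \ref{lem:phi-pi}: the projector sends this filtration of $J(\lambda)$ to a filtration of $I(\lambda) = \LAprojector(J(\lambda))$ whose layers are the standard modules $A(\mu) = \LAprojector(M(\mu)^\vee)$, with identical multiplicities. This exhibits \emph{one} standard filtration of $I(\lambda)$ with the claimed multiplicities.

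The remaining point is to argue that any other standard filtration must have the same multiplicities. Since $I(\lambda)$ is a finite extension of standard modules, each of which has LCS by Proposition \ref{prop:a-in-ola}, it has LCS, so the Jordan-Holder multiplicity $[I(\lambda):L(\mu)]$ is well-defined for every eligible $\mu$. For any standard filtration of $I(\lambda)$, additivity of LCS multiplicities gives
\begin{align*}
[I(\lambda):L(\mu)] \;=\; \sum_{\nu}\, \bigl(I(\lambda):A(\nu)\bigr)\,[A(\nu):L(\mu)].
\end{align*}
By Theorem \ref{thm:standard} the composition factors of $A(\nu)$ other than $L(\nu)$ itself satisfy $\mu \prec \nu$, and combined with Theorem \ref{thm:standard-mults} one gets that the matrix $\bigl([A(\nu):L(\mu)]\bigr)$ is unitriangular with respect to the interval-finite order $<_\order$ of Lemma \ref{lem:order}. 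The system above therefore has a unique solution in the standard multiplicities $(I(\lambda):A(\nu))$, so these are determined by the LCS multiplicities alone and hence are independent of the chosen filtration.

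I do not anticipate a genuine obstacle: all the serious machinery (the construction of $\mathcal C$ and $I(\lambda)$, exactness of $\ssCoind$ on semisimple duals, the transport Lemma \ref{lem:phi-pi}, and the interval-finite order) has already been assembled in the preceding sections. The only content of the proof is the compatibility of $\mathcal C$ with standard filtrations of injective objects, together with the uniqueness argument above.
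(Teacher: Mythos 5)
Your proposal is correct and follows essentially the same route as the paper: construct one standard filtration of $I(\lambda)$ by applying $\mathcal C$ (equivalently, $\LAprojector$ after $\ssCoind_{\overline{\lie q}}^{\lie g}\circ\mathcal I_{\lie s}^{\overline{\lie q}}$) to a costandard filtration of $I_{\lie s}(\lambda)$ and read off the multiplicities from Theorem \ref{thm:inj-filtration}. The paper simply asserts that one filtration suffices, whereas you additionally justify the independence of the multiplicities from the chosen filtration via the unitriangularity of $\bigl([A(\nu):L(\mu)]\bigr)$ with respect to $<_\order$ — a correct and welcome addition.
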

\begin{proof}
It is enough to check the result for one standard filtration.
We can obtain a standard filtration of $I(\lambda)$ by applying the functor 
$\mathcal C$ to a standard filtration of $I_{\lie s}(\lambda)$. Thus the 
desired multiplicity is the same as the multiplicity of $M_{\lie s}(\mu)^\vee$ 
in $I_{\lie s}(\lambda)$. By Theorem \ref{thm:inj-filtration} this multiplicity 
equals $[M_{\lie s}(\mu): L_{\lie s}(\lambda)] = m(\lambda,\mu)$. 
\end{proof}

We finish with a description of the irreducible blocks of $\CAT{}{}$. Recall 
that we denote by $\Lambda$ the root lattice of $\lie g$. For each $\kappa
\in \lie h^\circ / \Lambda$ set $\CAT{}{}(\kappa)$ to be the subcategory formed 
by those modules in $\CAT{}{}$ whose support is contained in $\kappa$. Notice 
in particular that each class has a fixed level, and so the decomposition 
$\CAT{}{} = \prod_{\kappa} \CAT{}{}(\kappa)$ refines the decomposition by 
levels. We now show that these are the indecomposable blocks of $\CAT{}{}$.
\begin{prop}
The blocks $\CAT{}{}(\kappa)$ are indecomposable.
\end{prop}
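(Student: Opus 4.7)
The plan is to show that for any two eligible weights $\lambda, \mu$ with $\lambda - \mu \in \Lambda$, the simple modules $L(\lambda)$ and $L(\mu)$ lie in the same block of $\CAT{}{}$. Since $\CAT{}{}$ is a highest weight category and each standard $A(\lambda)$ is indecomposable by Theorem \ref{thm:standard}, block equivalence on simples is the transitive symmetric closure of the relation $\lambda \sim \mu$ whenever $L(\mu)$ is a composition factor of $A(\lambda)$. From Theorem \ref{thm:standard-mults}, specialising the parametrisation of composition factors yields two families of elementary equivalences: the \emph{Weyl moves} $\lambda \sim \sigma \cdot \lambda$ for $\sigma \in \mathcal W(\lie s)$, obtained by taking the infinite shift to be zero; and the \emph{infinite translations} $\lambda \sim \lambda + \nu$ for $\nu$ in the support of $\SS^\bullet(\mathcal R^\vee)$, obtained by taking $\sigma = \mathsf{Id}$. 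Symmetry of $\sim$ yields the reverse direction of each move, and the support of $\SS^\bullet(\mathcal R^\vee)$ is closed under non-negative combinations of positive infinite roots.

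It remains to show that these moves, composed in chains, realise shifts by every element of $\Lambda$. Shifts in the sublattice generated by positive infinite roots follow directly from infinite translations. For a finite simple root, I use a swap-shift-swap chain: for $\nu$ a positive infinite root and $\sigma \in \mathcal W(\lie s)$, the composite
\begin{align*}
\lambda \;\sim\; \sigma \cdot \lambda \;\sim\; \sigma \cdot \lambda + \nu \;\sim\; \sigma^{-1} \cdot (\sigma \cdot \lambda + \nu) \;\sim\; \sigma^{-1} \cdot (\sigma \cdot \lambda + \nu) - \nu
\end{align*}
realises, via the identity $\sigma^{-1} \cdot (\sigma \cdot \lambda + \nu) = \lambda + \sigma^{-1}(\nu)$ (a direct computation from the definition of the dot action using $\sigma^{-1}(\rho) = \rho + (\sigma^{-1}(\rho)-\rho)$), a net shift of $\lambda$ by $\sigma^{-1}(\nu) - \nu$. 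Taking $\nu = \alpha_\infty^{(k)} = \epsilon_{-1}^{(k)} - \epsilon_1^{(k)}$ and $\sigma \in \mathcal W(\lie s)$ the transposition swapping $\epsilon_1^{(k)}$ with $\epsilon_j^{(k)}$ for some $j \geq 2$ in the positive part of slot $k$, the shift is exactly $\epsilon_1^{(k)} - \epsilon_j^{(k)}$; differences of such shifts for consecutive $j$ produce every finite simple root within the positive part of slot $k$, and analogous swaps using $\epsilon_{-1}^{(k)}$ with negative indices of slot $k$, together with the cross-slot simple roots $\epsilon_{-1}^{(k)} - \epsilon_1^{(k+1)}$, cover the remaining finite simple roots of $\lie g$.

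The main obstacle is verifying that the required transpositions lie in $\mathcal W(\lie s)$; this follows from the decomposition $\lie s_r \cong \gl(r,\CC)^2 \oplus \gl(2r,\CC)^{n-1}$ and the corresponding presentation of $\mathcal W(\lie s)$ as a product of symmetric groups over specific index sets. A short case analysis, taking particular care with the boundary $\gl(\infty)^2$ factors where the choice of auxiliary slot $k$ is constrained (one must pick $k$ whose positive or negative part contains the indices involved in the target finite simple root), confirms that all finite simple roots are reachable by chains of the above form. Iterating these single-root shifts then connects any two eligible weights in the same $\Lambda$-coset, proving that each $\CAT{}{}(\kappa)$ is indecomposable.
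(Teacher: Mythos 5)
Your overall framework is the same as the paper's: linkage of simples through composition factors of the standard modules $A(\lambda)$, read off from Theorem \ref{thm:standard-mults}. The translation moves $\lambda \sim \lambda + \nu$ for $\nu$ in the support of $\SS^\bullet(\mathcal R^\vee)$ are correctly justified (the multiplicity is $m(\lambda+\nu,\lambda+\nu)\dim\SS^\bullet(\mathcal R^\vee)_\nu \geq 1$). The gap is in the ``Weyl moves.'' Taking $\nu = 0$ in the theorem gives $[A(\lambda):L(\sigma\cdot\lambda)] = m(\lambda,\sigma\cdot\lambda)$, and this Kazhdan--Lusztig multiplicity vanishes (in both directions) unless $\sigma$ lies in the \emph{integral} Weyl group of $\lambda$. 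Eligible weights are arbitrary complex combinations of the $\epsilon_i^{(k)}$ and $\omega^{(k)}$, so integrality can fail: for $\lambda = \sqrt2\,\epsilon_1^{(1)}$ and $\beta = \epsilon_1^{(1)}-\epsilon_2^{(1)}$ one has $(\lambda+\rho,\beta^\vee)\notin\ZZ$, hence $m(\lambda,s_\beta\cdot\lambda)=m(s_\beta\cdot\lambda,\lambda)=0$; worse, $s_\beta\cdot\lambda - \lambda = -(\sqrt2+1)\beta \notin \Lambda$, so $L(s_\beta\cdot\lambda)$ does not even lie in the block $\CAT{}{}(\kappa)$ of $\lambda$ and cannot possibly be a constituent of $A(\lambda)$. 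Since $\lambda$ and $\lambda-\beta$ are both eligible and lie in the same coset, your swap--shift--swap chains fail to connect them, and the non-integral blocks are left uncovered.

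The repair does not need the Weyl group at all, and it is exactly what the paper does: the support of $\mathcal R$ is not just the infinite roots $\epsilon_{-1}^{(k)}-\epsilon_1^{(k)}$ but all of $\lie g^\psi_{\leq -1}$, in particular every root of $\psi$-degree $-1$ such as $\epsilon_{-1}^{(k)}-\epsilon_j^{(k)}$ for all $j\succ 0$ and $\epsilon_1^{(k+1)}-\epsilon_1^{(k)}$. These roots already span $\Lambda$ over $\ZZ$ (e.g.\ $\epsilon_i^{(k)}-\epsilon_{i+1}^{(k)} = (\epsilon_{-1}^{(k)}-\epsilon_{i+1}^{(k)}) - (\epsilon_{-1}^{(k)}-\epsilon_i^{(k)})$), so the translation moves alone, together with symmetry and transitivity of linkage, connect any two eligible weights in the same $\Lambda$-coset — with no integrality hypothesis. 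In short: you underestimated the support of $\mathcal R$, compensated with Weyl conjugation, and that compensation is where the argument breaks.
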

\begin{proof}
The support of $\lie g_{-1}^\psi \subset \mathcal R$ spans the 
root lattice, and from Theorem \ref{thm:standard-mults} that $[A(\lambda): 
L(\lambda)] = [A(\lambda): L(\lambda + \mu)]$ for any root in $\lie 
g^\psi_{-1}$. Thus whenever two weights $\lambda, \mu$ lie in the same class
modulo the root lattice, there is a finite chain of weights $\lambda = 
\lambda_1, \lambda_2, \ldots, \lambda_m = \mu$ such that $L(\lambda_{i+1})$ is a
simple constituent of the indecomposable module $A(\lambda_i)$. This shows that 
all simple modules in the block $\CAT{}{}(\kappa)$ are linked and hence the 
block is indecomposable.
\end{proof}

\begin{bibdiv}
\begin{biblist}
\bib{Brundan03}{article}{
  author={Brundan, Jonathan},
  title={Kazhdan-Lusztig polynomials and character formulae for the Lie superalgebra $\germ g\germ l(m|n)$},
  journal={J. Amer. Math. Soc.},
  volume={16},
  date={2003},
  number={1},
  pages={185--231},
  issn={0894-0347},
  doi={10.1090/S0894-0347-02-00408-3},
}

\bib{BLW17}{article}{
  author={Brundan, Jonathan},
  author={Losev, Ivan},
  author={Webster, Ben},
  title={Tensor product categorifications and the super Kazhdan-Lusztig conjecture},
  journal={Int. Math. Res. Not. IMRN},
  date={2017},
  number={20},
  pages={6329--6410},
  issn={1073-7928},
  doi={10.1093/imrn/rnv388},
}

\bib{BS12a}{article}{
  author={Brundan, Jonathan},
  author={Stroppel, Catharina},
  title={Highest weight categories arising from Khovanov's diagram algebra IV: the general linear supergroup},
  journal={J. Eur. Math. Soc. (JEMS)},
  volume={14},
  date={2012},
  number={2},
  pages={373--419},
  issn={1435-9855},
  review={\MR {2881300}},
  doi={10.4171/JEMS/306},
}

\bib{CPS88}{article}{
  author={Cline, Edward},
  author={Parshall, Brian},
  author={Scott, Leonard},
  title={Finite dimensional algebras and highest weight categories},
  journal={J. reine angew. Math},
  volume={391},
  number={85-99},
  pages={3},
  year={1988},
}

\bib{CP19}{article}{
  author={Coulembier, Kevin},
  author={Penkov, Ivan},
  title={On an infinite limit of BGG categories $\mathbf O$},
  journal={Mosc. Math. J.},
  volume={19},
  date={2019},
  number={4},
  pages={655--693},
  issn={1609-3321},
  doi={10.17323/1609-4514-2019-19-4-655-693},
}

\bib{DanCohen08}{article}{
  author={Dan-Cohen, Elizabeth},
  title={Borel subalgebras of root-reductive Lie algebras},
  journal={J. Lie Theory},
  volume={18},
  date={2008},
  number={1},
  pages={215--241},
}

\bib{DCPS16}{article}{
  author={Dan-Cohen, Elizabeth},
  author={Penkov, Ivan},
  author={Serganova, Vera},
  title={A Koszul category of representations of finitary Lie algebras},
  journal={Adv. Math.},
  volume={289},
  date={2016},
  pages={250--278},
}

\bib{DCPS07}{article}{
  author={Dan-Cohen, Elizabeth},
  author={Penkov, Ivan},
  author={Snyder, Noah},
  title={Cartan subalgebras of root-reductive Lie algebras},
  journal={J. Algebra},
  volume={308},
  date={2007},
  number={2},
  pages={583--611},
}

\bib{DP04}{article}{
  author={Dimitrov, Ivan},
  author={Penkov, Ivan},
  title={Borel subalgebras of $\rm gl(\infty )$},
  journal={Resenhas},
  volume={6},
  date={2004},
  number={2-3},
  pages={153--163},
}

\bib{Dixmier77}{book}{
  title={Enveloping Algebras},
  author={Dixmier, J.},
  series={North-Holland mathematical library},
  url={https://books.google.com.ar/books?id=Dl4ljyqGG9wC},
  year={1977},
}

\bib{DGK82}{article}{
  author={Deodhar, Vinay},
  author={Gabber, Ofer},
  author={Kac, Victor},
  title={Structure of some categories of representations of inifinite-dimensional Lie algebras},
  journal={Adv. in Math.},
  number={45},
  pages={92--116},
  volume={45},
  year={1982},
}

\bib{GP19}{article}{
  author={Grantcharov, Dimitar},
  author={Penkov, Ivan},
  title={Simple bounded weight modules for $\mathfrak {sl}_\infty , \mathfrak {o}_\infty , \mathfrak {sp}_\infty $},
  note={preprint, available at arXiv:1807.01899},
  year={2018},
}

\bib{HP22}{book}{
  author={Penkov, Ivan},
  author={Hoyt, Crystal},
  title={Classical Lie algebras at infinity},
  series={Springer Monographs in Mathematics},
  publisher={Springer, Cham},
  date={2022},
  pages={xiii+239},
}

\bib{HPS19}{article}{
  author={Hoyt, Crystal},
  author={Penkov, Ivan},
  author={Serganova, Vera},
  title={Integrable $\mathfrak {sl}(\infty )$-modules and category $\mathcal O$ for $\mathfrak {gl}(m \mid n)$},
  journal={{J. Lond. Math. Soc., II. Ser.}},
  volume={99},
  number={2},
  pages={403--427},
  year={2019},
}

\bib{Humphreys08}{book}{
  author={Humphreys, James E.},
  title={Representations of semisimple Lie algebras in the BGG category $\scr {O}$},
  series={Graduate Studies in Mathematics},
  volume={94},
  publisher={American Mathematical Society, Providence, RI},
  date={2008},
  pages={xvi+289},
}

\bib{Nampaisarn17}{book}{
  author={Nampaisarn, V.},
  title={Categories $\mathcal O$ for Dynkin Borel subalgebras of Root-Reductive Lie algebras},
  year={2017},
  note={PhD Thesis, available at \url {https://arxiv.org/abs/1706.05950}},
}

\bib{NP03}{article}{
  author={Neeb, Karl-Hermann},
  author={Penkov, Ivan},
  title={Cartan subalgebras of $\germ {gl}_\infty $},
  journal={Canad. Math. Bull.},
  volume={46},
  date={2003},
  number={4},
  pages={597--616},
  issn={0008-4395},
  doi={10.4153/CMB-2003-056-1},
}

\bib{PS11a}{article}{
  author={Penkov, Ivan},
  author={Serganova, Vera},
  title={Categories of integrable $sl(\infty )$-, $o(\infty )$-, $sp(\infty )$-modules},
  conference={ title={Representation theory and mathematical physics}, },
  book={ series={Contemp. Math.}, volume={557}, publisher={Amer. Math. Soc., Providence, RI}, },
  date={2011},
  pages={335--357},
  label={PS11a},
}

\bib{PS19}{article}{
  author={Penkov, Ivan},
  author={Serganova, Vera},
  title={Large annihilator category $\mathcal {O}$ for $\mathfrak {sl}(\infty ),\mathfrak {o}(\infty ),\mathfrak {sp}(\infty )$},
  journal={J. Algebra},
  volume={532},
  date={2019},
  pages={152--182},
}

\bib{PS11b}{article}{
  author={Penkov, Ivan},
  author={Styrkas, Konstantin},
  title={Tensor representations of classical locally finite Lie algebras},
  conference={ title={Developments and trends in infinite-dimensional Lie theory}, },
  book={ series={Progr. Math.}, volume={288}, publisher={Birkh\"{a}user Boston, Inc., Boston, MA}, },
  date={2011},
  pages={127--150},
  label={PS11b},
}

\bib{SS15}{article}{
  author={Sam, Steven V.},
  author={Snowden, Andrew},
  title={Stability patterns in representation theory},
  journal={Forum Math. Sigma},
  volume={3},
  date={2015},
  pages={Paper No. e11, 108},
  review={\MR {3376738}},
  doi={10.1017/fms.2015.10},
}

\bib{Serganova21}{article}{
  author={Serganova, Vera},
  title={Tensor product of the Fock representation with its dual and the Deligne category},
  conference={ title={Representation theory, mathematical physics, and integrable systems}, },
  book={ series={Progr. Math.}, volume={340}, publisher={Birkhauser/Springer, Cham}, },
  date={2021},
  pages={569--584},
  doi={10.1007/978-3-030-78148-4-19},
}
\end{biblist}
\end{bibdiv}

\end{document}